\documentclass[12pt]{amsart}
\usepackage[utf8]{inputenc}

\usepackage{amsrefs}
\usepackage{supertabular}
\usepackage{amsxtra,amssymb,amsmath,amscd,url,enumitem}

\setenumerate{itemsep=5pt}
\setitemize{itemsep=5pt}

\newlist{enumth}{enumerate}{1}
\setlist[enumth]{label=\emph{(\arabic*)}, ref=\thetheorem(\arabic*)}

\usepackage[utf8]{inputenc}
\usepackage[english]{babel}
\usepackage{fullpage}
\usepackage{scrtime}
\usepackage[colorlinks]{hyperref}
\usepackage{datetime}
\usepackage[all]{xy}
\usepackage{tikz-cd}
\usepackage{mathrsfs}
\usepackage{xspace}

\usepackage{tensor}
\usepackage{braket}

\usepackage{todonotes}
\usepackage[capitalize]{cleveref}
\crefalias{enumthi}{theorem}
\crefname{section}{\S}{\SS}
\crefname{figure}{Figure}{Figures}

\shortdate

\DeclareMathSymbol{A}{\mathalpha}{operators}{`A}%
\DeclareMathSymbol{B}{\mathalpha}{operators}{`B}%
\DeclareMathSymbol{C}{\mathalpha}{operators}{`C}%
\DeclareMathSymbol{D}{\mathalpha}{operators}{`D}%
\DeclareMathSymbol{E}{\mathalpha}{operators}{`E}%
\DeclareMathSymbol{F}{\mathalpha}{operators}{`F}%
\DeclareMathSymbol{G}{\mathalpha}{operators}{`G}%
\DeclareMathSymbol{H}{\mathalpha}{operators}{`H}%
\DeclareMathSymbol{I}{\mathalpha}{operators}{`I}%
\DeclareMathSymbol{J}{\mathalpha}{operators}{`J}%
\DeclareMathSymbol{K}{\mathalpha}{operators}{`K}%
\DeclareMathSymbol{L}{\mathalpha}{operators}{`L}%
\DeclareMathSymbol{M}{\mathalpha}{operators}{`M}%
\DeclareMathSymbol{N}{\mathalpha}{operators}{`N}%
\DeclareMathSymbol{O}{\mathalpha}{operators}{`O}%
\DeclareMathSymbol{P}{\mathalpha}{operators}{`P}%
\DeclareMathSymbol{Q}{\mathalpha}{operators}{`Q}%
\DeclareMathSymbol{R}{\mathalpha}{operators}{`R}%
\DeclareMathSymbol{S}{\mathalpha}{operators}{`S}%
\DeclareMathSymbol{T}{\mathalpha}{operators}{`T}%
\DeclareMathSymbol{U}{\mathalpha}{operators}{`U}%
\DeclareMathSymbol{V}{\mathalpha}{operators}{`V}%
\DeclareMathSymbol{W}{\mathalpha}{operators}{`W}%
\DeclareMathSymbol{X}{\mathalpha}{operators}{`X}%
\DeclareMathSymbol{Y}{\mathalpha}{operators}{`Y}%
\DeclareMathSymbol{Z}{\mathalpha}{operators}{`Z}%

\newcommand{\respup}[1]{\emph{(}\resp {#1}\emph{)}}

\renewcommand{\leq}{\leqslant}
\renewcommand{\geq}{\geqslant}
 
\numberwithin{equation}{section}

\def\setminus{\mathchoice
    {\mathbin{\vrule height .62ex width 1.61ex depth -.38ex}}
    {\mathbin{\vrule height .62ex width 1.61ex depth -.38ex}}
    {\mathbin{\vrule height .50ex width 0.85ex depth -.28ex}}
    {\mathbin{\vrule height .20ex width 0.570ex depth -.24ex}}
}

\renewcommand{\mathcal}{\mathscr}

\newcommand{\Cc}{\mathbf{C}}

\newcommand{\Zz}{\mathbf{Z}}

\newcommand{\Rr}{\mathbf{R}}
\newcommand{\Gg}{\mathbf{G}}

\newcommand{\Qq}{\mathbf{Q}}

\newcommand{\Ff}{\mathbf{F}}

\newcommand{\bQl}{\overline{\Qq}_{\ell}}

\def\loccit{loc.\kern3pt cit.{}\xspace}
\def\cf{see\kern.3em}
\def\Cf{See\kern.3em}
\def\eg{e.g.\kern.3em}

\def\resp{\text{resp.}\kern.3em}

\newcommand{\mods}[1]{\,(\mathrm{mod}\,{#1})}

\newcommand{\mfm}{\mathfrak{m}}

\newcommand{\lra}{\longrightarrow}

\newcommand{\fleche}[1]{\stackrel{#1}{\lra}}

\DeclareMathOperator{\ord}{ord}

\DeclareMathOperator{\Div}{Div}

\DeclareMathOperator{\divv}{div}

\DeclareMathOperator{\Tr}{Tr}

\DeclareMathOperator{\swan}{swan}

\newcommand{\eps}{\varepsilon}
\renewcommand{\rho}{\varrho}

\DeclareMathOperator{\SL}{\mathbf{SL}}
\DeclareMathOperator{\GL}{\mathbf{GL}}

\DeclareMathOperator{\SO}{\mathbf{SO}}

\DeclareMathOperator{\SU}{\mathbf{SU}}
\DeclareMathOperator{\Un}{\mathbf{U}}

\newcommand{\demi}{{\textstyle{\frac{1}{2}}}}

\DeclareMathSymbol{\gena}{\mathord}{letters}{"3C}
\DeclareMathSymbol{\genb}{\mathord}{letters}{"3E}

\theoremstyle{plain}
\newtheorem{theorem}{Theorem}[section]
\newtheorem*{theorem*}{Theorem}
\newtheorem{lemma}[theorem]{Lemma}

\newtheorem{proposition}[theorem]{Proposition}

\theoremstyle{remark}

\theoremstyle{definition}
\newtheorem{question}[theorem]{Question}

\newtheorem{definition}[theorem]{Definition}

\newtheorem{remark}[theorem]{Remark}

\AddToHook{env/lemma/begin}{\crefalias{theorem}{lemma}}
\AddToHook{env/proposition/begin}{\crefalias{theorem}{proposition}}
\AddToHook{env/corollary/begin}{\crefalias{theorem}{corollary}}
\AddToHook{env/conjecture/begin}{\crefalias{theorem}{conjecture}}
\AddToHook{env/claim/begin}{\crefalias{theorem}{claim}}
\AddToHook{env/remark/begin}{\crefalias{theorem}{remark}}

\newcommand{\abs}[1]{\left\lvert#1\right\rvert}

\newcommand{\mcL}{\mathscr{L}}

\newcommand{\mcX}{\mathscr{X}}

\renewcommand{\geq}{\geqslant}
\renewcommand{\leq}{\leqslant}

\begin{document}

\title{Jacobian graphs}

\author{Arthur Forey}
\address[A. Forey]{Univ. Lille, CNRS, UMR 8524 - Laboratoire Paul Painlevé,  \newline F-59000 Lille, France} 
  \email{arthur.forey@univ-lille.fr}

\author{Javier Fresán}
\address[J. Fres\'an]{Sorbonne Université and Université Paris Cité, CNRS, IMJ-PRG, \newline F-75005 Paris, France}
\email{javier.fresan@imj-prg.fr}

\author{Emmanuel Kowalski}
\address[E. Kowalski]{D-MATH, ETH Zürich, Rämistrasse 101, 8092 Zürich, Switzerland} 
\email{kowalski@math.ethz.ch}

\author{Yuval Wigderson}
\address[Y. Wigderson]{ETH-ITS, Scheuchzerstrasse 70, 8006 Zürich, Switzerland} 
\email{yuval.wigderson@eth-its.ethz.ch}

\begin{abstract}
  We introduce \emph{jacobian graphs}, which are explicit
  families of regular graphs that are spectrally indistinguishable from
  random graphs, but whose local structure is very different from that
  of random graphs. The construction relies on the geometric properties
  of generalized jacobians of curves and on general equidistribution
  theorems for character sums over finite fields.
\end{abstract}




\maketitle 

\setcounter{tocdepth}{1}
\tableofcontents

\section{Introduction}

In the companion paper~\cite{short}, we constructed explicit families of
graphs which have very strong spectral pseudorandomness properties, and
yet differ strikingly from random graphs by the fact that they do not contain
either $K_{2,3}$ or $C_4$ as subgraphs. These graphs are parameterized by a finite field~$k$, and their properties relied on two facts: (1) that
certain explicit subsets of $k\times k$ are Sidon sets (or close relatives
of Sidon sets); (2) that certain deep equidistribution theorems due to
Katz and Deligne for families of exponential sums over finite fields
imply that the empirical spectral distribution is close to the
semicircle distribution. These are rather remarkable properties: as we discuss in much greater detail in \cite{short}, the latter property implies that our graphs are ``spectrally indistinguishable'' from random graphs of the same density, whereas the former property is strikingly different from the behavior of random graphs of the same density. 

The starting point of the current paper is the fact that both (1) and
(2) can be generalized considerably.  This allows us in this paper to
construct many new examples of graphs with properties similar to those
defined in~\cite{short}.  On the other hand, in doing so we have to rely
on more advanced techniques from algebraic and arithmetic
geometry. This means that the resulting graphs, which we call
\emph{jacobian graphs}, are rather less transparent to the broader
combinatorics community. However, we view these as interesting test
cases for many extremal combinatorial problems, in view of the richness
of the underlying geometry.

Our goal in this paper is not ``merely'' to generalize the construction in \cite{short}. The constructions we gave in \cite{short} are rather special: we were only able to construct graphs on $p^{2k}$ vertices, where $p$ is a prime, and gave at most two non-isomorphic graphs of that order. By contrast, the constructions in the present paper are much more varied, in a number of ways: we can construct a graph satisfying (1) and (2) on nearly every number $n$ of vertices (see Section~\ref{sec-graphs}), and moreover the number of ``genuinely different constructions'' is rather large. More precisely, our graphs are defined in terms of curves of genus $1$ or $2$ over finite fields, and these are parameterized by moduli spaces of dimension $1$ and $3$, respectively. This means that, rather than arising as ``sporadic'' examples as in \cite{short}, the construction of jacobian graphs in the present paper is much more ``robust''. We are hopeful that this extra flexibility will manifest itself as further applications; concretely, it may be possible to prove the existence of a jacobian graph with additional desirable properties without explicitly constructing it, by instead averaging over the large family of different jacobian graphs (see \cref{rm-moduli}). 

This paper will mostly be written with an audience familiar with the
language of algebraic geometry, although we provide an appendix with
concrete descriptions of some of the basic objects (and we emphasize
that these are indeed concrete enough to allow us to compute fully
explicitly the graphs that we describe).

A jacobian graph is associated to the following data:
\begin{itemize}
\item a field~$k$; 
\item a smooth projective geometrically connected algebraic curve~$C$
  defined over~$k$; 
\item a $k$-rational effective divisor $\mfm$ on~$C$, called a
  \emph{modulus}; 
\item a $k$-rational divisor $\delta$ of degree~$1$ on~$C$.
\end{itemize}

Given these data, one can define:
\begin{itemize}
\item the generalized jacobian $J_{\mfm}$, which is a connected algebraic group defined over~$k$, of dimension $\dim(J_{\mfm})=g+\max(\deg(\mfm)-1,0)$, where $g$ is the genus of $C$; 
\item an Abel--Jacobi map
$i_{\delta}\colon C\setminus \mfm\to J_{\mfm}$, which  is a closed
immersion if $\dim(J_{\mfm}) \geq 2$.  
\end{itemize}

\begin{definition} The corresponding \emph{jacobian graph}\footnote{\,We omit~$\delta$ from the notation.} $\Gamma(C,\mfm;k)$ is the
graph\footnote{\,In this paper, graphs are undirected, without multiple
  edges, but may have loops.} with vertex set~$J_{\mfm}(k)$ and with
an edge joining~$x$ to~$y$ if and only if $x+y\in i_{\delta}(C\setminus\mfm)(k)$. 
\end{definition}

We will be particularly interested in the case where the dimension
of~$J_{\mfm}$ is~$2$ and $g\geq 1$, which implies that~$g=1$ or~$2$.  In
this case, it was proved in~\cite{ffk2} that the image~$S(k)$
of the~$k$\nobreakdash-points of~$C\setminus \mfm$ in~$J_{\mfm}(k)$ is a \emph{symmetric Sidon set},
i.e., that there exists an element $a_0$ of~$J_{\mfm}(k)$, called the
\emph{center}, such that $S(k)=a_0-S(k)$, and furthermore the equation
\[
  a+b=c+d
\]
with $(a,b,c,d)$ in~$S(k)$ has only the solutions where $a\in \{c,d\}$
and where $b=a_0-a$.

\begin{theorem}\label{th-jacobian}
  With notation as above, assume that~$k$ is a finite field. For
  $n\geq 1$, denote by $k_n$ the extension of degree~$n$ of~$k$ in a
  fixed algebraic closure~$\bar{k}$ of~$k$. Denote further
  \[
  \Gamma_n(C,\mfm)=\Gamma(C,\mfm;k_n).
 \]  Assume that $\dim(J_{\mfm})=2$
  and the genus of~$C$ is either~$1$ or~$2$.
  \begin{enumth}
  \item The graph $\Gamma_n(C,\mfm)$ is a finite graph, $d_n$-regular
    with $d_n=|i_{\delta}(C\setminus \mfm)(k_n)|$, and contains no
    subgraph
    $K_{2,3}=\tikz[every node/.style={fill, circle, inner
      sep=1.2pt},xscale=.35, yscale=.25,baseline=2.1ex]{\foreach \i in
      {1,2} \node (a\i) at (0,\i+.5) {}; \foreach \j in {1,2,3} \node
      (b\j) at (1,\j) {}; \foreach \i in {1,2} {\foreach \j in {1,2,3}
        \draw (a\i) -- (b\j);}}$.

  \item\label{it-ram1} The non-trivial spectrum\footnote{\,The spectrum
      refers to that of the Markov averaging operator of the graph,
      i.e., to that of $d_n^{-1}A$, where~$A$ is the adjacency
      matrix. The eigenvalue~$1$ is called the \emph{trivial}
      eigenvalue.} of~$\Gamma_n(C,\mfm)$ is contained in an interval
    \[
      \Bigl[-\frac{2}{\sqrt{|k_n|}}+O(|k_n|^{-1}),
      \frac{2}{\sqrt{|k_n|}}+O(|k_n|^{-1})
      \Bigr]
    \]
    for all~$n\geq 1$.

  \item\label{it-ram2} If~$C$ is \emph{ordinary}, then there exists a
    sequence of values of~$n$ of positive density such that~$\Gamma_n(C,\mfm)$ is a Ramanujan graph, i.e., such that the
    non-trivial spectrum of~$\Gamma_n(C,\mfm)$ is contained in the
    interval
    \[
      \Bigl[-\frac{2\sqrt{d_n-1}}{d_n}, \frac{2\sqrt{d_n-1}}{d_n}
      \Bigr].
    \]

  \item\label{it-equi} Suppose that the center of the symmetric Sidon
    set $i_{\delta}((C\setminus \mfm)(k_n))$ belongs to $2J_{\mfm}(k)$
    and that one of the following conditions holds: \smallskip
    \begin{enumerate}
    \item[\textup{(i)}] $g=2$.
    \item[\textup{(ii)}] $g=1$, $\mfm$ is a double point and the
      integer~$a_C(k)$ defined by $|C(k)|=|k|+1-a_C(k)$ is either~$0$ or has
      multiplicative order at least~$5$ in $k^{\times}$.
    \item[\textup{(iii)}] $g=1$ and, for some prime $\ell$ invertible
      in~$k$, the set of unramified characters for the object
      $i_{\delta,*}\bQl[1](1/2)$ on~$J_{\mfm}$, in the sense of
      Forey--Fresán--Kowalski~\cite{ffk}, coincides with the set of
      characters which are trivial on the kernel of the natural
      projection $J_{\mfm}\to C$.
    \end{enumerate} Then, as~$n\to+\infty$, the numbers
    \[
      \lambda \frac{d_n}{\sqrt{d_n-1}},
    \]
    where $\lambda$ ranges over the non-trivial eigenvalues
    of~$\Gamma_n(C,\mfm)$, counted with multiplicity, become
    equidistributed according to the semicircle
    law
    \[
      \mu_{\mathrm{sc}}=\frac{1}{\pi}\sqrt{1-\frac{x^2}{4}}dx
    \]
    supported on the interval~$[-2,2]$.
 
   \item More precisely, for $n\geq 1$, the estimate
    \begin{equation}\label{it-w1}
      W_1\Bigl( \frac{1}{|J_{\mfm}(k_n)|-1} \sum_{\substack{\lambda\in
        \mathrm{Sp}(\Gamma_n(C,\mfm))\\\lambda\not=1}} \delta_{\lambda
      d_n/\sqrt{d_n-1}},
      \mu_{\mathrm{sc}}
      \Bigr)\ll \frac{1}{n},
    \end{equation}
    holds, where the sum over the spectrum is performed with
    multiplicity and $W_1$ denotes the $1$-Wasserstein distance between
    measures on~$\Rr$.
  \end{enumth}
\end{theorem}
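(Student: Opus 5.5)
The plan is to treat $\Gamma_n=\Gamma_n(C,\mfm)$ as a "sum graph" (Cayley sum graph) on the finite abelian group $A=J_{\mfm}(k_n)$ with connection set $S=i_\delta((C\setminus\mfm)(k_n))$. All spectral statements then reduce to character sums over $S$.

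For part (1), regularity is immediate: for each $x$, its neighbours are the elements $s-x$ with $s\in S$, so $x$ has exactly $d_n=|S|$ neighbours (a loop is counted once, consistently with the adjacency matrix convention). For the absence of $K_{2,3}$, suppose $x\neq x'$ have three common neighbours $y_1,y_2,y_3$. Then $x+y_j=s_j$ and $x'+y_j=s'_j$, so $s_j-s'_j=x-x'$ for every $j$, which gives $s_j+s'_{j'}=s_{j'}+s'_j$. The symmetric Sidon property of \cite{ffk2} forces, for $j\ne j'$, either $s_j=s_{j'}$ (impossible since the $y_j$ are distinct) or $s_j=s'_j$ (impossible since $x\ne x'$), or else the degenerate solution $s'_{j'}=a_0-s_j$. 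Using this degenerate relation for all three pairs among $\{1,2,3\}$, we get $s'_{2}=a_0-s_1$, $s'_{1}=a_0-s_2$, and similarly for the other pairs. Combining these relations yields a contradiction, namely that two of the $s_j$ coincide.

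For the spectrum, I use the standard fact for Cayley sum graphs. For each character $\chi$ of $A$, set $\hat S(\chi)=\sum_{s\in S}\chi(s)$.
\begin{itemize}
\item If $\chi$ is real, then $\chi$ is an eigenvector with eigenvalue $\hat S(\chi)$.
\item If $\chi\neq\bar\chi$, then the span of $\chi$ and $\bar\chi$ carries the eigenvalues $\pm|\hat S(\chi)|$.
\end{itemize}
Hence the non-trivial spectrum of $d_n^{-1}A$ consists of $\pm|\hat S(\chi)|/d_n$ for $\chi\ne1$, with signs for real characters. By \cite{ffk}, for $\chi$ outside a small exceptional set, $\hat S(\chi)=-\mathrm{Tr}(\mathrm{Frob}\mid H^1)$ is a sum of two Weil numbers of weight $1$, so $|\hat S(\chi)|\le 2|k_n|^{1/2}$. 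Together with $d_n=|k_n|+O(|k_n|^{1/2})$, this gives part (2) for those characters. The exceptional (ramified) characters must be handled separately: I expect $H^1$ to drop in rank there, giving $|\hat S(\chi)|\le |k_n|^{1/2}+O(1)$, and I would argue this through the explicit structure of the group extension $J_{\mfm}$ of $\mathrm{Jac}(C)$.

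For part (3), $C$ ordinary means the Frobenius eigenvalues of $\mathrm{Jac}(C)$ are not all supersingular. The two weights entering $\hat S(\chi)$ are $\sqrt{|k_n|}e^{i\theta}$-type numbers, whereas $2\sqrt{d_n-1}$ exceeds $2\sqrt{|k_n|}$ only when $a_C(k_n)$ is suitably negative, i.e.\ when $|C(k_n)|>|k_n|+1$. Ordinarity means some Frobenius angle is irrational modulo $\pi$, so by equidistribution of $n\theta$ a positive-density set of $n$ gives $a_C(k_n)\le -c|k_n|^{1/2}$. On these $n$ we have $d_n\ge |k_n|+c'|k_n|^{1/2}$, which absorbs the $2|k_n|^{1/2}$ bound with margin, provided the ramified characters are controlled as above.

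For parts (4) and (5), I invoke the equidistribution theorem of \cite{ffk}: the unitary Frobenius conjugacy classes $\Theta_\chi$, for $\chi$ unramified, become equidistributed in the tannakian monodromy group as $n\to\infty$. The hypotheses (i)–(iii) are exactly what is needed to identify this group as containing $\SL_2$ or $\mathrm{Sp}_2$ acting via the standard representation, and the center condition $a_0\in 2J_{\mfm}(k)$ handles the sign/twist. The trace of a Haar-random element of $\SU_2$ is semicircle distributed on $[-2,2]$. The normalization $\lambda d_n/\sqrt{d_n-1}$ differs from $\mathrm{Tr}\,\Theta_\chi$ by $O(|k_n|^{-1/2})$, and the exceptional characters have density $O(|k_n|^{-1})$, so neither affects the limit. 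The quantitative $W_1\ll 1/n$ bound follows by pairing with Chebyshev polynomials $U_m$ for $m\le M$: the effective version in \cite{ffk} bounds $|\mathbb E\,U_m(\mathrm{Tr}\Theta_\chi)|\ll m^{c}|k_n|^{-1/2}$, and a Jackson-type smoothing inequality gives $W_1\ll 1/M+\sum_{m\le M}m^{c-1}|k_n|^{-1/2}$. Choosing $M\asymp n$ makes the second term exponentially small, leaving $1/n$. The main obstacle is part (4): computing the monodromy group from (i)–(iii), especially in genus $1$ where the weight-$0$ part from the torus kernel can enlarge or shrink it. This is where I would rely most heavily on \cite{ffk}.
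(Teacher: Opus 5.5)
Your argument for part~(1) is fine, but part~(4), which is the heart of the theorem, has a genuine gap. You assert that hypotheses (i)--(iii) ``are exactly what is needed'' to identify the monodromy group as $\SU_2(\Cc)$, without giving any mechanism. Moreover, \cite{ffk} only supplies this for $g=2$ (its Theorem~11.1); the genus-$1$ cases need separate arguments. The missing chain has three steps.

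First, Grothendieck--Ogg--Shafarevich gives rank $r=2$, so $K^g\subset K^a\subset \Un_2(\Cc)$. At a double point this uses $\swan_{x_0}=1$ for characters nontrivial on $\Gg_a$. Translating by $b$ with $2b=a_0$ is harmless for the spectrum by Proposition~\ref{pr-1}\,(2), up to the boundedly many real characters. It makes the sums real, so $K^a\subset\SU_2(\Cc)$.

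Second, the symmetric Sidon property gives fourth moment $M_4=2$. Larsen's alternative together with Katz's primitivity result then leaves only two possibilities. Either $K^a=\SU_2(\Cc)$, in which case $K^g=K^a$ since $K^a/K^g$ is abelian. Or $K^a$ is one of $\SL_2(\Ff_3)$, $\GL_2(\Ff_3)$, $\SL_2(\Ff_5)$.

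Third, the finite groups must be excluded, and this is exactly where (ii) and (iii) enter.
\begin{itemize}
\item Under (iii), finiteness makes $U_n(\chi)$ an algebraic integer for unramified $\chi$. Fourier inversion over the unramified characters, the ramified ones being exactly the $\chi'\circ\pi$, then shows that $(a_C(k_n)-1)/|k_n|^{1/2}$ is an algebraic integer. This fails after replacing $n$ by $3n$.
\item Under (ii), each finite candidate has a proportion $\geq 1/4$ of trace-zero elements. On the other hand, $T_n(\chi)=\sum_{x\in(C\setminus\mfm)(k_n)}\chi(i_\delta(x))$ never vanishes when $p\nmid|C(k_n)|$: split $J_{\mfm}(k_n)\simeq k_n\times C(k_n)$ and use linear disjointness of $\Qq(\mu_p)$ and $\Qq(\mu_{|C(k_n)|})$. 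Also $p\mid |C(k_n)|$ if and only if $a_C(k)^n\equiv 1\mods{p}$. Hence the proportion of zeros is at most $1/\ord_p(a_C(k))<1/4$, and it is $0$ if $p\mid a_C(k)$.
\end{itemize}
None of this appears in your plan; in particular, the condition on the multiplicative order of $a_C(k)$ is never used.

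There are also smaller problems elsewhere.
\begin{itemize}
\item In~(2), the exceptional characters need no separate treatment, and your ``expected rank drop'' is not quite right: for two distinct points the rank stays $2$ and the weight drops to $0$. For every $\chi\neq 1$ one has $H^0_c=H^2_c=0$, the Grothendieck--Ogg--Shafarevich formula gives $\dim H^1_c\leq 2$, and Deligne's Weil~II gives weights $\leq 1$. So $|T_n(\chi)|\leq 2|k_n|^{1/2}$ holds uniformly.
\item In~(3), ``ordinary'' does not mean ``not supersingular'' when $g=2$, and one irrational angle does not suffice. If the Weil polynomial had a factor $(X-|k|^{1/2})^2$, then $a_C(k_n)>0$ for all~$n$. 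What ordinarity provides is that no Frobenius eigenvalue is $|k|^{1/2}$ times a root of unity, so every $\theta_i\neq 0$. Then $\sum_i\cos(2\pi x_i)$ has mean zero on the closure of $\{(n\theta_i)_i\}$ and is negative on an open subset of positive measure.
\item In~(5), the bound $m^c|k_n|^{-1/2}$ that you attribute to \cite{ffk} is not available. With it, your own smoothing argument would yield a power saving $|k_n|^{-\gamma}$, which the paper only states as an expectation. The rate $1/n\asymp 1/\log|k_n|$ is what one gets when the constants grow exponentially in~$m$, which forces $M\asymp n$.
\end{itemize}
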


These properties are similar to those of the graphs described
in~\cite{short}, and we refer the reader to the introduction and discussion
in~\cite{short} for comments on the context and combinatorial
background, as well as motivation for such constructions.

These properties suggest that it would be interesting to understand
further invariants of jacobian graphs. In fact, the answer to the
following question could have remarkable consequences in Ramsey theory, as we discuss in \cite{short}*{\S\,2.3}. 

\begin{question}\label{qu:ind}
  What is the independence number\footnote{\,Recall that this is the
    largest order of a subset~$I$ of vertices such that no $x\neq y$
    in~$I$ are joined by an edge. If the graph contains loops, we
    ignore them when considering independent sets.} of jacobian graphs?
  In particular, does there exist $\delta>0$ such that the independence
  number of a jacobian graph with $n$ vertices is~$O(n^{3/4-\delta})$? Even if this is not the case for all jacobian graphs, is it true for some infinite sequence of jacobian graphs?
\end{question}

\begin{remark}\label{rm-intro}
  Here are some remarks concerning~\cref{th-jacobian}.
  
  \begin{enumerate}
  \item The proof of this theorem relies on deep tools of algebraic
    geometry; besides our earlier construction in~\cite{short}, a useful
    and fair\footnote{\,However, we stress that in our setting, the
      degree tends to infinity with the order of the graph. In this
      regime, it is much easier to construct Ramanujan graphs, and
      proving that certain explicit sequences of graphs (e.g.\ Paley
      graphs) are Ramanujan can be done using purely elementary
      arguments.} comparison is the construction of the original
    Ramanujan graphs \cite{lps}, which also depends essentially on the
    Riemann hypothesis over finite fields (as well as on the relation
    between Hecke eigenvalues of modular forms and varieties over finite
    fields, due to Eichler, Igusa and Shimura).
  
    \item The jacobian graphs are ``explicit'', and can be efficiently
    constructed once the data that define them is given
    (Figure~\ref{fig:spectrum data} below demonstrates this by giving
    the numerical data in one more or less random example). On the other
    hand, it is not necessarily straightforward to construct such a
    graph with $n$ vertices for a given integer~$n$. However, it is easy
    to find one with~$(1+o(1))n$ vertices, at least using a
    probabilistic algorithm, even if one wishes to have a true Ramanujan
    graph (see Proposition~\ref{pr-n}).
    
    \begin{figure}[ht]
      \includegraphics[width=.45\textwidth]{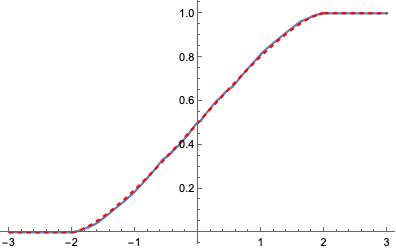}
      \includegraphics[width=.45\textwidth]{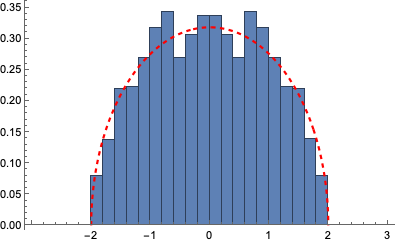}
      \caption{Numerical data for the spectrum of the jacobian graph
        associated to the genus~$2$ curve defined by $y^2=x^5-x^4+2x^3+2$
        over $\Ff_{53}$ and the trivial modulus. The graph has $2660$
        vertices. In both charts, the blue data are the spectrum of the
        graph, and the red curve is the semicircle distribution; the first
        chart shows the cumulative distribution function, and the second
        shows a histogram of the probability density function.}
      \label{fig:spectrum data}
    \end{figure}
    
  \item It is noteworthy that the general jacobian graphs vary in
    ``continuous'' families, as explained in
    Remark~\ref{rm-moduli}. This is an unusual feature for ``extremal''
    constructions based on algebraic or geometric objects, including
    those from~\cite{short}, and we are intrigued by the possibility of
    exploiting these degrees of freedom in some applications (see
    Remark~\ref{rm-moduli}).

\item Another contrasting feature is that, whereas all non-trivial
    eigenvalues of the graphs in~\cite{short} have large multiplicity
    (about the square root of the number of vertices), we expect that very
    few eigenvalues of jacobian graphs should have any
    multiplicity. This is confirmed experimentally. See \cite{short}*{\S\,2.1} for more details about eigenvalue multiplicity and its relation to the phenomenon of eigenvalue repulsion in the study of random matrices.

  \item The condition that $\Gamma_n(C,\mfm)$ is a Ramanujan graph in
    Theorem\,\ref{it-ram2} means in particular that it enjoys
    essentially optimal spectral pseudorandomness properties; see, e.g.,
    the survey of Krivelevich and Sudakov~\cite{ks}*{page\,19}.

  \item For comparison, the graphs constructed in~\cite{short} arise in
    a similar way, but with the underlying group $J_{\mfm}(k_n)$
    replaced with the additive group $k_n\times k_n$ and the image of
    the Abel--Jacobi map replaced by either the graph of the restriction
    of $x\mapsto x^{-1}$ to~$k_n^{\times}$ or the graph of the cube map
    $x\mapsto x^3$ (the latter if the characteristic of~$k$ is
    $\geq 7$).

  \item As in~\cite{short}, the fact that our jacobian graphs do not
    contain a copy of $K_{2,3}$ is quite striking. Indeed, the
    Kővári--Sós--Turán theorem~\cite{kst} implies that \emph{every}
    $n$-vertex graph with average degree at least
    $(\sqrt 2+o(1))\sqrt n$ contains a copy of $K_{2,3}$, and work of
    Füredi~\cite{furedi} shows that this is asymptotically best possible
    (including the factor~$\sqrt{2}$). Thus, up to a small
    multiplicative factor, our jacobian graphs are as dense as possible
    among all $K_{2,3}$-free graphs, and therefore highly atypical among
    all graphs of the same edge density. In particular, an Erdős--Rényi random graph at the same density
    contains a copy of $K_{2,3}$ asymptotically almost surely (in fact,
    it has on the order of~$n^2$ such copies). The same holds in other
    natural random graph models at the same density, such as random
    regular graphs with degree $d = \sqrt n$.

  \item Condition~(iii) in part~(4) of the theorem is somewhat technical
    and will be discussed in more detail when we give the proof. It is
    related to the finer aspects of the equidistribution theorem
    of~\cite{ffk}. Although we expect that it should be true in all
    cases, we do not currently know how to check it concretely in any
    given case. On the other hand, we will see that the second condition is not very
    restrictive at all, and can be checked easily for a concrete curve.

  \end{enumerate}
\end{remark}

\begin{remark}
  (1) The Wasserstein (or Monge--Kontorovich or Rubinstein--Kontorovich)
  distance $W_1(\mu_1,\mu_2)$ between probability measures on a compact
  metric space~$X$ (in our case, the interval $[-2,2]$) can be defined as
  \[
    W_1(\mu_1,\mu_2)=\sup_{f\text{ $1$-Lipschitz}}\Bigl|
    \int_{X}fd\mu_1-\int_Xfd\mu_2\Bigr|,
  \]
  where~$f$ runs over $1$-Lipschitz functions $X\to \Cc$ (see~\cite{k-u}
  for an introduction to Wasserstein metrics from the point of view of
  equidistribution).

  (2) We expect that the bound~(\ref{it-w1}) could be improved to
  \[
    W_1\Bigl( \frac{1}{|J_{\mfm}(k_n)|-1} \sum_{\substack{\lambda\in
        \mathrm{Sp}(\Gamma_n(C,\mfm))\\\lambda\not=1}} \delta_{\lambda
      d_n/\sqrt{d_n-1}}, \mu_{\mathrm{sc}} \Bigr) \ll
    \frac{1}{|k_n|^{\gamma}},
  \]
  for some $\gamma>0$.
\end{remark}

\begin{remark}\label{rm-moduli}
  As we already stated in Remark~\ref{rm-intro}, one feature of jacobian
  graphs is that they depend on ``continuous'' parameters. More
  precisely, recall that for any genus~$g$ and field~$k$, there exists an
  algebraic variety $\mathcal{M}_g$ defined over~$k$ whose points in any
  extension~$k'$ of~$k$ correspond naturally to isomorphism classes of
  curves of genus~$g$ over~$k'$; this is called the {coarse
    moduli space} of curves of genus~$g$. The dimension
  of~$\mathcal{M}_g$ is $3g-3$ for~\hbox{$g\geq 2$}. In particular, for
  jacobian graphs associated to a curve of genus~$2$, we have
  a~$3$\nobreakdash-parameter family of examples over~$k$.

  More concretely, some interesting one-parameter families can be
  obtained, e.g., by looking at the (smooth projective model of the)
  family of curves
  \[
    y^2=f(x)(x-t)
  \]
  for a fixed squarefree polynomial $f\in k[X]$ of degree~$4$, with~$t$
  varying among the elements of~$k$ which are not roots of~$f$.

  One potential application of these continuous families has to do with averaging over many different jacobian graphs. For example, returning to \cref{qu:ind}, we expect it to be very difficult to bound the independence number of an arbitrary jacobian graph, but it is not inconceivable that one could instead bound the independence number of an average jacobian graph, using the fact that we have a rich family of these parameterized by $\mathcal M_2$. In closely related contexts, such averaging arguments were used by Ruzsa \cite{ruzsa} and Pach--Zakharov \cite{pz} to study variants of Sidon sets; in both cases, the key input is that abelian groups such as~$k \times k$ contain a rich ``continuous'' family of Sidon sets, and hence one can prove the existence of one with desirable properties by averaging over the whole family. In another closely related context, there is a major line of work that proves that many groups have Cayley graphs and Cayley sum graphs with desirable properties (in particular, small independence number, as in \cref{qu:ind}), by averaging over all possible generating sets; see the papers by Conlon\nobreakdash--Fox\nobreakdash--Pham--Yepremyan \cite{cfpy} and Alon--Pham \cite{ap}, as well as the references therein, for more details. 
\end{remark}

\subsection*{Notation.} Given complex-valued functions $f$ and $g$
defined on a set $S$, we write synonymously $f=O(g)$ or $f \ll g$ if
there exists a real number $C \geq 0$ (called an ``implicit
constant'') such that the inequality $|f(s)|\leq C |g(s)|$ holds for all
$s \in S$. We write $f\asymp g$ if $f\ll g$ and~$g\ll f$ hold. If $f$ and~$g$ are defined on some infinite subset of positive integers,
we write~$f \sim g$ if $\lim_{n \to \infty} f(n)/g(n)=1$.

\subsection*{Acknowledgements.}  We thank K. Soundararajan and
J. Merikoski for discussions related to analytic properties of
elliptic curves and D. Loeffler for remarks on elliptic curves modulo
primes.

A.\,F.\,acknowledges the support of the CDP C2EMPI, together with the
French State under the France-2030 programme, the University of Lille,
the Initiative of Excellence of the University of Lille, the European
Metropolis of Lille for their funding and support of the
R\nobreakdash-CDP-24-004-C2EMPI project. J.\,F.\,is partially supported by the
European Research Council (ERC) under the European Union's Horizon
2020 research and innovation programme (grant agreement
no.\,101170066). E.\,K.\,is partially supported by the SNF grant
$219220$ and the SNF\nobreakdash-ANR ``Etiene'' grant
$10003145$. Y.\,W.\,is supported by Dr.\,Max R\"ossler, the Walter
Haefner Foundation and the ETH Z\"urich Foundation.

\section{Sum graphs: basic properties and spectrum}

The graphs we consider in this paper are examples of the general construction of sum graphs.  We begin by surveying some of their basic properties in
general. Let~$G$ be a finite abelian group and let~$S$ be a subset
of~$G$.  The \emph{(Cayley) sum graph} $\Gamma(G,S)$ is the graph with
vertex set~$G$ and with $x$ and~$y$ connected by an edge if and only if
$x+y\in S$. This graph is $|S|$-regular (potentially with loops on at
most $\abs S$ vertices), and in particular all operators that can be
used to define the spectrum are directly equivalent up to scaling. We
will use the Markov averaging operator, which we will denote
by~$M_{G,S}$ (omitting the subscript when no confusion arises). This is
defined on functions~$f\colon G\to \Cc$ by
\begin{align*}
  M_{G,S}f(y)&=\frac{1}{|S|}\sum_{\substack{x\in G\\x+y\in S}}f(x)=
  \frac{1}{|S|}\sum_{s\in S}f(s-y)
\end{align*}
for all~$y\in G$. Throughout, the spectrum of a graph $\Gamma(G,S)$ always
refers to the spectrum of its Markov operator $M_{G,S}$, which is a
self-adjoint linear map on the space of complex-valued functions on~$G$. It is classical that one can compute this spectrum 
using characters of~$G$ (this already appears, somewhat implicitly, in the work of Chung~\cite{chung}). We state this formally now,
with a minor additional property.  We denote by~$\widehat{G}$ the group
of characters of~$G$, i.e., the group (under pointwise multiplication)
of group homomorphisms from~$G$ to~$\Cc^{\times}$.

\begin{proposition}\label{pr-1}
  Let $G$ be a finite abelian group and $S\subset G$ a non-empty subset.
  \begin{enumth}
  \item The spectrum of the graph~$\Gamma(G,S)$ consists of the
    numbers
    \[
      \frac{1}{|S|}\sum_{y\in S}\chi(y)
    \]
    for all characters~$\chi\in \widehat{G}$ such that $\chi^2=1$ and of
    the numbers
    \[
      -\Bigl|\frac{1}{|S|}\sum_{y\in S}\chi(y)\Bigr|,\quad\quad
      \Bigl|\frac{1}{|S|}\sum_{y\in S}\chi(y)\Bigr|
    \]
    for all pairs $(\chi,\bar{\chi})$ of non-real characters.
    
  \item For any~$a\in G$, the graphs $\Gamma(G,S)$ and $\Gamma(G,a+S)$
    have the same eigenvalues, with multiplicity, associated to pairs
    of non-real characters.
\end{enumth}

The statement is to be understood as follows: if some of the sums coincide for different choices of~$\chi$, then the corresponding eigenvalue occurs with
multiplicity equal to the number of characters that give rise to it.
\end{proposition}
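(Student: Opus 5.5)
The approach is to diagonalize $M=M_{G,S}$ in the basis of characters and then read off the eigenvalues on the small invariant subspaces this produces. Write $\hat S(\chi)=\frac{1}{|S|}\sum_{s\in S}\chi(s)$. For a character $\chi$, the formula $Mf(y)=\frac{1}{|S|}\sum_{s\in S}f(s-y)$ gives
\[
  M\chi(y)=\frac{1}{|S|}\sum_{s\in S}\chi(s)\chi(-y)=\hat S(\chi)\,\bar\chi(y).
\]
So $M$ does not fix each character line. Instead it sends $\chi$ to $\hat S(\chi)\bar\chi$ and $\bar\chi$ to $\hat S(\bar\chi)\chi=\overline{\hat S(\chi)}\,\chi$. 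Since the characters form a basis of $\Cc^G$, orthogonal for the standard inner product, $\Cc^G$ splits as an orthogonal direct sum of $M$-invariant subspaces. These are the lines $\Cc\chi$ for real characters ($\chi^2=1$, i.e. $\chi=\bar\chi$) and the planes $\Cc\chi\oplus\Cc\bar\chi$ for pairs of non-real characters. The spectrum of $M$, with multiplicity, is the union of the spectra of its restrictions to these pieces.

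On a line $\Cc\chi$ with $\chi^2=1$, the eigenvalue is $\hat S(\chi)$, which is real because $\chi$ takes values $\pm1$; this is the first family in (1). On a plane with basis $(\chi,\bar\chi)$, the matrix of $M$ is
\[
  \begin{pmatrix} 0 & \overline{\hat S(\chi)}\\ \hat S(\chi) & 0\end{pmatrix},
\]
whose characteristic polynomial is $t^2-|\hat S(\chi)|^2$. The eigenvalues are therefore $\pm|\hat S(\chi)|$, which is the second family. The multiplicity convention stated after the proposition follows automatically from this block decomposition, since each block contributes its own eigenvalues, including the case $\hat S(\chi)=0$, where the plane gives $0$ twice.

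For (2), replacing $S$ by $a+S$ gives $\widehat{(a+S)}(\chi)=\chi(a)\hat S(\chi)$, and since $|\chi(a)|=1$ the absolute value is unchanged. Hence each non-real pair contributes the same two eigenvalues $\pm|\hat S(\chi)|$ for both graphs. (The real-character eigenvalues, by contrast, may change sign.) No step is a real obstacle; the only point to watch is that $M$ swaps $\chi$ and $\bar\chi$ rather than fixing characters, and this is exactly why the $2\times 2$ blocks and the absolute values appear.
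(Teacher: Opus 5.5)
Your proof is correct and follows the paper's argument. Like you, the paper computes $T\chi=\widehat{t}(\chi)\overline{\chi}$, splits the function space into lines spanned by real characters and planes spanned by $\chi,\overline{\chi}$, reads off $\pm|\widehat{t}(\chi)|$ from the anti-diagonal $2\times 2$ blocks, and proves (2) by noting that translating $S$ multiplies each character sum by a unimodular factor. The only difference is that the paper first runs this computation for a general additive kernel $t$ on a compact abelian group, and then specializes to $t=\mathbf{1}_S/|S|$.
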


\begin{remark}
  Many readers will also be aware of Cayley graphs, which are defined
  similarly to sum graphs, but where we insist that the set $S$ be
  symmetric (i.e., satisfies $S=-S$), and then join $x$ and $y$ by an
  edge if and only if $x-y \in S$. Most of our results immediately carry
  through for Cayley graphs, since their spectra are given by the same
  character sums above (without taking absolute values). However, we
  work with Cayley sum graphs because they are defined for all
  $S \subset G$, rather than only symmetric subsets. Another difference,
  which may be relevant to certain applications, is that whereas Cayley
  graphs are highly symmetric (the group $G$ acts transitively on the
  vertices), this is usually not the case for sum graphs.
\end{remark}

The short proof of Proposition~\ref{pr-1} is given
in~\cite{short}*{Prop.\,3.3}. We repeat it in a more general context,
considering general linear operators on compact abelian groups with
additive kernel, since it is no more difficult, and it might be useful for
other purposes.

Let~$G$ be a compact abelian group. We denote by $dx$ the unique
probability Haar measure on~$G$ (so it is the normalized counting
measure if~$G$ is a finite group). Let $t\colon G\to\Cc$ be a bounded
measurable function on~$G$, and let~$T$ be the integral operator with
kernel $(x,y)\mapsto t(x+y)$, acting on the Hilbert space~$L^2(G)$. That is,
\[
  Tf(y)=\int_Gt(x+y)f(x)dx
\] 
for $f\in L^2(G)$ and $y\in G$. We will generalize Proposition~\ref{pr-1} by finding a ``spectral
decomposition'' of~$T$ (note that~$T$ is not always diagonalizable).

For the compact group~$G$, the dual group~$\widehat{G}$ is the group of
continuous group morphisms~$G\to\Cc^{\times}$; since~$G$ is compact,
every such character takes values in the unit circle.

For every character~$\chi\in\widehat{G}$ and any~$y\in G$, we get
\[
  T\chi(y)=\int_G t(x+y)\chi(x)dx=\overline{\chi(y)}\,\widehat{t}(\chi), 
\]
where $\widehat{t}$ is the Fourier transform of~$t$ on~$G$, defined as
\[
  \widehat{t}(\chi)=\int_G t(x)\chi(x)dx
\]
for $\chi\in\widehat{G}$ (for convenience, we use here the
``probabilistic'' normalization of the Fourier transform,  instead of the more usual analytic one involving
$\overline{\chi(x)}$). Hence, we get the equation
\[
  T\chi=\widehat{t}(\chi)\overline{\chi}.
\]

There are now two cases.
\par
\medskip
\par
\textbf{Case 1.} If~$\chi^2=1$, then $\chi=\overline{\chi}$ is an eigenvector with
eigenvalue $\widehat{t}(\chi)$.

\par
\medskip
\par
\textbf{Case 2.} If~$\chi^2\not=1$, then $\chi$
and~$\overline{\chi}$ span a $2$-dimensional subspace~$E_{\chi}$ of~$L^2(G)$ satisfying \hbox{$T(E_{\chi})\subset E_{\chi}$} (by linear
  independence of characters, or simply because $\overline{\chi}=\alpha\chi$
  implies~$\alpha=1$ by evaluating at~$1$). In
the basis $(\chi,\overline{\chi})$, the matrix of the linear map induced
by~$T$ on~$E_{\chi}$ is
\[
  \begin{pmatrix}
    0&\widehat{t}(\overline{\chi})\\
    \widehat{t}(\chi)&0
  \end{pmatrix}.
\]

We now appeal to a simple lemma from linear algebra.

\begin{lemma}
  Let~$\alpha$ and~$\beta$ be complex numbers and
  $A_{\alpha,\beta}= \begin{pmatrix}
    0&\beta\\
    \alpha&0
  \end{pmatrix}$.
  
  \begin{enumth}
  \item The spectrum of~$A_{\alpha,\beta}$ is
    $\{\sqrt{\alpha\beta},-\sqrt{\alpha\beta}\}$, where the choice of
    the square root is irrelevant if the product $\alpha \beta$ is not a positive real
    number.
    
  \item The matrix $A_{\alpha,\beta}$ is diagonalizable unless exactly
    one of $\alpha$ and~$\beta$ is zero.

  \item Let us endow~$\Cc^2$ with the standard inner product. Then the matrix
    $A_{\alpha,\beta}$ is normal if and only if $|\alpha|=|\beta|$, it
    is self-adjoint if and only if $\beta=\overline{\alpha}$, and it is
    unitary if and only if $|\alpha|=|\beta|=1$.
  \end{enumth}
\end{lemma}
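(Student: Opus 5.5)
The three parts are elementary $2\times 2$ computations, and I will treat them in order.

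For (1), the characteristic polynomial of $A_{\alpha,\beta}$ is
\[
  \det(X\cdot 1-A_{\alpha,\beta})=X^2-\alpha\beta .
\]
Its roots are the two square roots $\pm\sqrt{\alpha\beta}$ of $\alpha\beta$ in $\Cc$. Swapping the choice of square root only swaps the two roots, so the spectrum as a set (with multiplicity) is independent of that choice.

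For (2) there are three cases.
\begin{itemize}
\item If $\alpha\beta\neq 0$, the two eigenvalues $\pm\sqrt{\alpha\beta}$ are distinct, so the matrix is diagonalizable.
\item If $\alpha=\beta=0$, the matrix is zero, hence diagonal.
\item If exactly one of $\alpha$, $\beta$ vanishes, the matrix is nonzero but $A_{\alpha,\beta}^2=\alpha\beta\cdot 1=0$. A nonzero nilpotent matrix is not diagonalizable, since its only eigenvalue is $0$ and diagonalizability would force it to be $0$.
\end{itemize}

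For (3), I compute the adjoint for the standard inner product, which is the conjugate transpose:
\[
  A_{\alpha,\beta}^*=\begin{pmatrix}0&\overline{\alpha}\\ \overline{\beta}&0\end{pmatrix}=A_{\overline{\beta},\overline{\alpha}}.
\]
The products are
\[
  A_{\alpha,\beta}A_{\alpha,\beta}^*=\begin{pmatrix}|\beta|^2&0\\0&|\alpha|^2\end{pmatrix},
  \qquad
  A_{\alpha,\beta}^*A_{\alpha,\beta}=\begin{pmatrix}|\alpha|^2&0\\0&|\beta|^2\end{pmatrix}.
\]
\begin{itemize}
\item \emph{Normal.} The two products agree exactly when $|\alpha|=|\beta|$.
\item \emph{Self-adjoint.} Comparing entries of $A_{\alpha,\beta}$ and $A_{\alpha,\beta}^*$, equality holds exactly when $\beta=\overline{\alpha}$; the other entry condition $\alpha=\overline{\beta}$ is equivalent to it.
\item \emph{Unitary.} We need $A_{\alpha,\beta}A_{\alpha,\beta}^*=1$, which by the first product above means $|\alpha|^2=|\beta|^2=1$. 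The reverse product then also equals $1$ automatically.
\end{itemize}

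There is no real obstacle here. The only point needing care is the non-diagonalizable case in (2), and the nilpotency argument settles it.
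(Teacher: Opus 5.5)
Your proposal is correct and follows the paper's proof, which also rests on the characteristic polynomial $X^2-\alpha\beta$ and the adjoint $A_{\overline{\beta},\overline{\alpha}}$. You have simply written out the elementary linear algebra (distinct eigenvalues, nilpotency when exactly one entry vanishes, and the diagonal products $AA^*$ and $A^*A$) that the paper leaves to the reader.
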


\begin{proof}
  The characteristic polynomial $\det(X-A_{\alpha,\beta})$ is
  $X^2-\alpha\beta$, and the adjoint (for the standard inner product) is
  $A_{\bar{\beta},\bar{\alpha}}$, so all these facts follow from
  elementary linear algebra.
\end{proof}

This lemma implies that the spectrum of the linear operator~$T$ consists
of the numbers~$\widehat{t}(\chi)$ for $\chi$ of order~$2$, and the
numbers
\[
  \sqrt{\widehat{t}(\chi)\widehat{t}(\overline{\chi})},\quad\quad
  -\sqrt{\widehat{t}(\chi)\widehat{t}(\overline{\chi})}
\]
for each pair $(\chi,\overline{\chi})$ with $\chi^2\not=1$.  Moreover,
$T$ is normal if and only if the two-by-two blocks are all normal, meaning
if and only if $ |\widehat{t}(\chi)|=|\widehat{t}(\overline{\chi})|$ for
all~$\chi$ with $\chi^2\not=1$ (or for all~$\chi$, since this condition
is automatic for $\chi^2=1$).
Similarly, the operator $T$ is self-adjoint if and only if
$\overline{\widehat{t}(\chi)}=\widehat{t}(\overline{\chi})$ for
all~$\chi$. Since
\[
  \overline{\widehat{t}(\chi)}=\int_G
  \overline{t(x)}\overline{\chi(x)}dx,\quad\quad
  \widehat{t}(\overline{\chi})=
  \int_Gt(x)\overline{\chi(x)}dx,
\]
this is the case if and only if~$t$ is real-valued. In this case, the
operator~$T$ is diagonalizable; its
spectrum consists of $\widehat{t}(\chi)$ for $\chi^2=1$ and
\[
  |\widehat{t}(\chi)|,\quad\quad - |\widehat{t}(\chi)|
\]
for pairs $(\chi,\overline{\chi})$ with $\chi^2\not=1$, with
multiplicity taken into account.

We now specialize this discussion to prove Proposition~\ref{pr-1}. We
consider therefore a finite group~$G$, a non-empty subset~$S\subset G$, and $t$ of the form
\[
  t=\frac{\mathbf 1_S}{|S|},
\]
where~$\mathbf 1_S$ denotes the characteristic function of~$S$. Then the
operator~$T$ is the Markov operator~$M_{G,S}$ of the graph
$\Gamma(G,S)$, and the previous discussion establishes the first two
statements of Proposition~\ref{pr-1}.

For the last statement, observe that for any character~$\chi$ of~$G$, we
have
\[
  \frac{1}{|S|} \sum_{y\in a+S}\chi(y)=
  \frac{\overline{\chi(a)}}{|S|}\sum_{y\in S}\chi(y),
\]
and thus the two sums have the same modulus for any non-real character.

\section{Jacobian graphs: basic properties}\label{sec-graphs}

Let $k$ be a finite field. We now consider the data $(k,C,\mfm)$ which is used to define the jacobian graphs $\Gamma_n(C,\mfm)$.  For
simplicity, we will assume that all components of the modulus~$\mfm$ are
$k$-rational points. For each $n \geq 1$, we abbreviate
\[
S_n=i_{\delta}((C\setminus\mfm)(k_n)) \subset J_{\mathfrak{m}}(k_n). 
\]

From the Riemann hypothesis for curves over finite fields, due to Weil,
and the structure of the generalized jacobians, it follows that
$|J_{\mathfrak{m}}(k_n)|\sim |k_n|^{\dim(J_{\mfm})}$ and~$|S_n|\sim |k_n|$ as
$n\to+\infty$ (see, for instance,~\cite{ffk2}*{\S\,3} for precise bounds
and references for these facts; for curves and for the
case~$\mfm=\emptyset$, see also~\cite{milne1}*{Th.\,19.1} and
\cite{milne2}*{Th.\,11.1}). Thus, the degree~$d$ and the number of
vertices~$|V|$ of~$\Gamma_n(C,\mfm)$ are related by
$d\sim |V|^{1/\dim(J_{\mfm})}$.

From Proposition~\ref{pr-1} applied to~$\Gamma_n(C,\mfm)$, it follows
that the spectrum of~$\Gamma_n(C,\mfm)$ is determined by the
exponential sums
\[
  \sum_{x\in S_n}\chi(x)=\sum_{x\in
    (C\setminus\mfm)(k_n)}\chi(i_{\delta}(x))
\]
for~$\chi\in\widehat{G}_n$. We deduce the following spectral bound.

\begin{proposition}\label{pr-max-spectrum}
  Let $(C,\mfm)$ be as above. There exists a constant~$c\geq 0$ with the
  following property: if $n\geq 1$ satisfies
  \[
    |(C\setminus\mfm)(k_n)|>c|k_n|^{1/2},
  \]
  then the graph $\Gamma_n(C,\mfm)$ is connected, so $\lambda=1$ is an
  eigenvalue with multiplicity~$1$, and any eigenvalue $\lambda\neq 1$
  of the Markov operator of~$\Gamma_n(C,\mfm)$ satisfies
  \[
    |\lambda|\leq c\frac{|k_n|^{1/2}}{|(C\setminus\mfm)(k_n)|}
    =O(1/|k_n|^{1/2}).
  \]
\end{proposition}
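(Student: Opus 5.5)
By Proposition~\ref{pr-1}, every eigenvalue of the Markov operator of $\Gamma_n(C,\mfm)=\Gamma(J_{\mfm}(k_n),S_n)$ is, up to sign, of the form $|S_n|^{-1}\sum_{x\in S_n}\chi(x)$ for some $\chi\in\widehat{J_{\mfm}(k_n)}$, with the trivial character giving the eigenvalue~$1$. The plan is therefore to bound the character sums
\[
  \sum_{x\in (C\setminus\mfm)(k_n)}\chi(i_{\delta}(x))
\]
uniformly in~$n$ and in non-trivial~$\chi$ by $c|k_n|^{1/2}$, with $c$ depending only on $(C,\mfm)$.

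The key step is this uniform bound. Via the Lang isogeny $x\mapsto F_n(x)-x$ on $J_{\mfm}$ (with $F_n$ the $|k_n|$-Frobenius), each character $\chi$ of $J_{\mfm}(k_n)$ corresponds to a rank-one lisse $\bQl$-sheaf $\mathcal{L}_{\chi}$ on $J_{\mfm}\otimes k_n$, and the trace function of $i_{\delta}^*\mathcal{L}_{\chi}$ is $x\mapsto\chi(i_{\delta}(x))$. The crucial input is the defining universal property of the generalized jacobian (Rosenlicht--Serre): the pullback $i_{\delta}^*\mathcal{L}_{\chi}$ is a rank-one sheaf on $C\setminus\mfm$ that is tamely or wildly ramified only at points of $\mfm$, with conductor bounded by $\mfm$, and it is geometrically trivial only when $\chi$ is trivial. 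This is because every such character sheaf on $C\setminus\mfm$ of conductor $\leq\mfm$ comes from $J_{\mfm}$, and $i_\delta$ induces a surjection on fundamental groups, since its image generates $J_{\mfm}$.

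I then apply the Grothendieck--Lefschetz trace formula and Deligne's Weil~II. For $\chi\neq 1$, $H^0_c$ and $H^2_c$ vanish, so the sum equals $-\Tr(F_n\mid H^1_c)$, whose eigenvalues have weight $\leq 1$. The Euler--Poincaré (Grothendieck--Ogg--Shafarevich) formula gives
\[
  \dim H^1_c\leq 2g-2+\deg(\mfm)+\sum_{v\in\mfm}\swan_v.
\]
The Swan conductors are bounded in terms of the multiplicities in~$\mfm$, so the dimension is bounded independently of $n$ and~$\chi$. This gives $|\sum\chi(i_\delta(x))|\leq c|k_n|^{1/2}$. Translating to eigenvalues yields $|\lambda|\leq c|k_n|^{1/2}/|S_n|$, which is $O(|k_n|^{-1/2})$ because $|S_n|\sim|k_n|$.

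Connectedness follows from the same bound. For a sum graph, the multiplicity of the eigenvalue~$1$ is the number of characters with $\sum_{S_n}\chi=|S_n|$. Under the hypothesis $|S_n|>c|k_n|^{1/2}$, no non-trivial $\chi$ can achieve this, and likewise no $\chi$ of order~$2$ yields eigenvalue~$1$. Hence $1$ is simple and the graph is connected, since the multiplicity of $1$ equals the number of components.

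Here $|S_n|=|(C\setminus\mfm)(k_n)|$ requires $i_\delta$ to be injective. This holds when $\dim J_{\mfm}\geq 2$ (closed immersion). In general one can work with the multiset instead, or assume it. I expect the main obstacle to be the geometric non-triviality of $i_\delta^*\mathcal{L}_\chi$ uniformly in~$n$, together with the uniform conductor bound; both are supplied by the class field theory of generalized jacobians.
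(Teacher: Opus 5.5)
Your proposal is correct and is essentially the paper's argument. The paper likewise gets a uniform bound $\bigl|\sum_{x\in(C\setminus\mfm)(k_n)}\chi(i_{\delta}(x))\bigr|\leq c|k_n|^{1/2}$ for $\chi\neq 1$ from Deligne's theorem, with $H^2_c=0$ because the image of $i_{\delta}$ generates $J_{\mfm}$ and with $\dim H^1_c$ controlled by Grothendieck--Ogg--Shafarevich as in the proof of Theorem~\ref{th-equi}, and then observes that no non-trivial character can produce the eigenvalue~$1$. Two small corrections: by Proposition~\ref{pr-1}, a pair $(\chi,\bar\chi)$ of non-real characters already yields the eigenvalue~$1$ as soon as $|\sum_{S_n}\chi|=|S_n|$, not only when the sum equals $|S_n|$, but your bound controls the modulus, so connectedness still follows; and you only need geometric non-triviality of $i_{\delta}^*\mcL_{\chi}$ for $\chi\neq 1$, not surjectivity on the full fundamental groups.
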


\begin{proof}
  We first recall that a graph is connected if and only if~$1$ is an
  eigenvalue with multiplicity~$1$.

  The statements follow from the Riemann hypothesis over finite fields,
  but in the most general form proved by Deligne~\cite{weil2}. More
  precisely, the latter implies (see~\cite{ffk}*{Ch.\,11}, or compare
  with the beginning of the proof of Theorem~\ref{th-equi}
  below) that there exists a constant~$c\geq 0$ such that
  \[
    \Bigl|\frac{1}{|k_n|^{1/2}}\sum_{x\in
      (C\setminus\mfm)(k_n)}\chi(i_{\delta}(x))
    \Bigr|\leq c
  \]
  for any non-trivial
  character~$\chi\colon J_{\mfm}(k_n)\to\Cc^{\times}$. 

  If a character $\chi\not=1$ gives rise to the eigenvalue~$1$ of the
  Markov operator, then the corresponding sum must be equal to
  \[
    \Bigl|\frac{1}{|k_n|^{1/2}}\sum_{x\in
      (C\setminus\mfm)(k_n)}\chi(i_{\delta}(x)) \Bigr|
    =\frac{|(C\setminus\mfm)(k_n)|}{|k_n|^{1/2}}, 
  \]
and hence the following inequality holds: 
  \[
    \frac{|(C\setminus\mfm)(k_n)|}{|k_n|^{1/2}}\leq c.
  \]

  Therefore, for all~$n$ for which the above inequality fails (in particular for
  all large~$n$, since~\hbox{$|(C\setminus \mfm)(k_n)|\sim |k_n|$} as
  $n\to +\infty$), we deduce that $1$ is an eigenvalue with
  multiplicity~$1$, and that the other eigenvalues
  satisfy
   \[
    |\lambda|=\Bigl|\frac{1}{|S_n|}\sum_{x\in S_n}\chi(x)\Bigr|
    =\frac{|k_n|^{1/2}}{|S_n|} \Bigl|\frac{1}{|k_n|^{1/2}}\sum_{x\in
      (C\setminus \mfm)(k_n)}\chi(i_{\delta}(x))\Bigr|\leq c
    \frac{|k_n|^{1/2}}{|S_n|},
  \]
  for some character~$\chi$.
\end{proof}

In the remainder of this paper, we will restrict our attention to pairs
$(C,\mfm)$ such that the graphs have maximal possible edge density and
$C$ has genus~$g\geq 1$. As explained in~\cite{ffk2}*{\S\,3}, this occurs when
$g=1$ and $\mfm$ has degree~$2$, or $g=2$ and $\mfm$ is trivial, and the
image of~$C\setminus\mfm$ in~$J_{\mfm}$ is then a symmetric Sidon set.

We will now describe the three possible cases of data $(C,\mfm)$ that 
satisfy these conditions. We first recall that in the case~$g=1$,
since $k$ is a finite field, there exists a $k$-rational point which can
be taken as the origin to define an abelian group law on~$C$, which
becomes an elliptic curve. We always assume that this is done. 

\begin{enumerate}
\item If $g=1$ and $\mfm$ is the sum of two distinct $k$-rational
  points, then the group $J_{\mfm}(k_n)$ sits for all $n\geq 1$ in an
  exact sequence
  \begin{equation}\label{eq-exact-1}
    1\to k_n^{\times}\to J_{\mfm}(k_n)\to C(k_n)\to 1. 
  \end{equation}
  
\item If $g=1$ and $\mfm$ is a single $k$-rational point with
  multiplicity two, then the group~$J_{\mfm}(k_n)$ sits for all
  $n\geq 1$ in an exact sequence
  \begin{equation}\label{eq-exact-2}
    1\to k_n\to J_{\mfm}(k_n)\to C(k_n)\to 1.
  \end{equation}
  
\item If $g=2$ and $\mfm$ is zero, then $J_{\mfm}$ is the classical
  two-dimensional jacobian variety of the smooth curve $C$. We then write $J$ instead of $J_{\mfm}$. 
\end{enumerate}

We now use this information to address the question of which
integers arise as the number of vertices for jacobian graphs satisfying
the assumptions of Theorem~\ref{th-jacobian}. Readers mostly interested
in the proof of the theorem for an individual curve may skip to the next
section.

\begin{proposition}\label{pr-n}
  Let~$x\geq 2$ be a real number. Let $N(x)$ \respup{$M(x)$} be the
  number of positive integers~$m\leq x$ for which there exist a
  prime~$p$ and an ordinary curve of genus~$2$ over~$\Ff_p$ such that $J(\Ff_p)$ has $m$ points  \respup{the number
    $m\leq x$ for which such a curve exists for which the center of
    the symmetric Sidon set is in $2J(\Ff_p)$}.

  We then have
  \[
    N(x)=x+O(x^{5/6}),\quad\quad M(x)\geq \frac{x}{2}+O(x^{5/6}).
  \]
\end{proposition}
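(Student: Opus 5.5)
We need integers $m\le x$ that occur as $|J(\Ff_p)|$ for an ordinary genus~$2$ curve over some prime field. For a prime $p$, the possible orders are $P(1)$, where $P$ is the characteristic polynomial of Frobenius, a Weil polynomial $X^4-aX^3+bX^2-apX+p^2$. Hence $|J(\Ff_p)|=P(1)=1-a+b-ap+p^2$. The plan has two ingredients. First, we use the theorem (Howe--Nart--Ritzenthaler, building on Honda--Tate and Maisner--Nart) describing which Weil polynomials of degree~$4$ over $\Ff_p$ are Jacobians of genus~$2$ curves: every ordinary, simple, irreducible Weil polynomial with $b$ prime to $p$ is realized, with only a few explicit exceptions. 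Second, we show that the values $P(1)$ coming from such admissible pairs $(a,b)$ cover almost every integer $m\le x$.

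\textbf{Counting.} Fix $p$. As $(a,b)$ ranges over the Weil region ($|a|\le 4\sqrt p$, $2|a|\sqrt p-2p\le b\le a^2/4+2p$), the value $P(1)=p^2+1-a(p+1)+b$ covers, for each fixed $a$, an interval of consecutive integers of length about $a^2/4-2|a|\sqrt p+4p$. For $a$ near $0$ this length is about $4p$. As $a$ varies over a range of size $\gg\sqrt p$, these intervals are shifted by steps of $p+1$, so consecutive intervals overlap. Their union is therefore essentially the whole Hasse--Weil interval $[(\sqrt p-1)^4,(\sqrt p+1)^4]$, except near the edges. We then restrict to pairs with $p\nmid b$ and $P$ irreducible; reducible $P$ force a factorization and form a thin set. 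Removing these loses only $O(1)$ values of $b$ per $a$, and since the intervals overlap with multiplicity these gaps are absorbed. The result is that every integer in $[p^2-cp^{3/2},p^2+cp^{3/2}]$ is attained, for some $c>0$. To cover all $m\le x$, we take consecutive primes: by Huxley-type prime gap results, $p_{j+1}-p_j\ll p_j^{7/12+\epsilon}$, which is $\le p_j^{1/2}\cdot$(small) is not quite enough. Instead it suffices that $p_{j+1}^2-p_j^2\ll p_j\cdot p_j^{7/12}<cp_j^{3/2}$, which holds. Hence all $m$ beyond a constant are covered, and the error term comes only from the exceptional values and from $m$ below the threshold.

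\textbf{Error term and the $M(x)$ bound.} The stated $O(x^{5/6})$ suggests a cruder bookkeeping. Write $m\approx p^2$. The exceptional $(a,b)$ (non-simple, non-ordinary, or failing the Howe--Nart--Ritzenthaler conditions) give at most $O(p^{3/2})$ values of $m$ per $p$ in a window. Alternatively, one uses only those $m$ covered with the admissible intervals, accepting a loss of $x^{5/6}$ from the windows not fully covered near each $p^2$. I expect this step, turning the Weil-polynomial realizability theorem into a clean count with the claimed error, to be the main technical obstacle. For $M(x)$, the center condition should be a parity condition. In genus~$2$ with trivial modulus, the center is $\kappa$-related: it is the class of the canonical divisor shifted by $2\delta$, so lying in $2J(\Ff_p)$ is automatic when $|J(\Ff_p)|$ is odd. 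Odd $m$ form half the integers, and applying the covering argument restricted to odd values gives $M(x)\ge x/2+O(x^{5/6})$.
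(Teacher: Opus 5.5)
Your route is the one the paper relies on: values $P(1)$ of ordinary Weil polynomials fill a window of width $\asymp p^{3/2}$ around $p^2$; the windows are glued across primes; and Howe--Nart--Ritzenthaler passes from isogeny classes to Jacobians. The paper takes the first two steps from Br\"oker--Howe--Lauter--Stevenhagen, Th.~3.1 and Lemma~3.2.

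The gluing step is wrong. The windows around $p_j^2$ and $p_{j+1}^2$ overlap only if $p_{j+1}^2-p_j^2\approx 2p_j(p_{j+1}-p_j)\ll p_j^{3/2}$. That is exactly the condition $p_{j+1}-p_j\ll p_j^{1/2}$, which you had just noticed Huxley's bound does not give. Rewriting it does not help: $p_j\cdot p_j^{7/12}=p_j^{19/12}$, and $19/12>3/2$, so your inequality is false. Gaps $\ll\sqrt{p}$ between \emph{all} consecutive primes are not known even under the Riemann hypothesis. So you cannot conclude that every large $m$ is covered, and the exponent $5/6$ reflects precisely this obstruction, not crude bookkeeping. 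The missing ingredient is an average bound for large gaps, Matom\"aki's theorem
\[
\sum_{\substack{p_n\le y\\ p_{n+1}-p_n>\sqrt{p_n}}}(p_{n+1}-p_n)\ll y^{2/3}.
\]
Integers $m\le x$ missed by every window lie between the windows of consecutive primes $p_n\ll\sqrt{x}$ with $p_{n+1}-p_n>\sqrt{p_n}$. The windows have half-width about $2p^{3/2}$, so smaller gaps cause no loss. Each such stretch has length $\ll p_n(p_{n+1}-p_n)$, so there are $\ll\sqrt{x}\cdot x^{1/3}=x^{5/6}$ uncovered integers. This is the content of Br\"oker--Howe--Lauter--Stevenhagen's Th.~3.1, which the paper quotes.

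Your bookkeeping of exceptions also fails as written. For fixed $a$, the split polynomials $(X^2-sX+p)(X^2-tX+p)$ with $s+t=a$ occupy $\asymp\sqrt{p}$ values of $b$, not $O(1)$. Over all $a$ that is $\asymp p$ pairs per prime. Your claim that these are absorbed by the overlap is unproved, since a given $m$ has only about four representations $(a,b)$ for a given $p$. Your fallback loss of $O(p^{3/2})$ values per prime is useless, because summed over $p\ll\sqrt{x}$ it exceeds $x$.

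The fix is not to discard split classes at all: the only ordinary split classes without a Jacobian have $s=t$ or $|s-t|=1$. Then each $p$ has only $O(\sqrt{p})$ bad pairs:
\begin{itemize}
\item non-ordinary ones, $b\in p\Zz$ with $-2p\le b\le 6p$;
\item those split exceptions;
\item the simple exceptions of HNR's Table~2, e.g.\ $a^2-b=p$ with $b<0$.
\end{itemize}
This gives $O(x^{3/4})$ lost $m$ in total. The paper's count is sharper. For the surface from BHLS Lemma~3.2, a split exception forces $m$ to be a square or of the form $u^2-u$. In the simple case, the bound $b\ge -2p-2$ from that construction leaves only $b_1=-1$, $a_1^2=p-1$, hence $p=s^2+1$ and $O(x^{1/4})$ values. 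Your passage from $N(x)$ to $M(x)$ via odd $m$ is exactly the paper's argument.
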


\begin{proof}
  The second statement follows from the first by simply
  considering the odd integers counted by $N(x)$. Indeed, if $J(\Ff_p)$ has odd cardinality, then the multiplication
  by~$2$ map is an isomorphism of $J(\Ff_p)$, so the center of the
  symmetric Sidon set is in its image.
  
  Let $N'(x)$ be the number  of positive integers~$m\leq x$ for which there exists an \emph{ordinary abelian surface} over a finite field~$\Ff_p$ with $p>7$ having $m$ rational points. It was proved by Bröker, Howe, Lauter and
  Stevenhagen~\cite{bhls}*{Th.\,3.1} that $N'(x)$ satisfies\footnote{\,A key analytic input is a theorem of
    Matom\"aki~\cite{mato} concerning the average size of ``large'' gaps
    between primes; using a more recent result of
    Heath-Brown~\cite{hb}, we would obtain an error term of
    size~$x^{4/5}$.}
  \[
    N'(x)=x+O(x^{5/6})
  \]
  (to be precise, the fact that the surfaces are ordinary is not
  formally stated, but is mentioned in the first paragraph of the
  proof of~\cite{bhls}*{Lemma\,3.2}).

  A criterion of Howe, Nart and Ritzenthaler~\cite{hnr}*{Th.\,1.2}
  determines which isogeny classes of abelian surfaces over finite
  fields contain the jacobian of a genus~$2$ curve. Since isogenous
  abelian varieties over finite fields have the same number of
  rational points, it suffices therefore to prove that the abelian
  surfaces constructed in~\cite{bhls}*{Lemma\,3.2} satisfy these
  conditions except possibly for $\ll x^{5/6}$ values of~$m$.

  For a given~$m$, let~$A$ be the ordinary abelian surface constructed
  in~\cite{bhls}*{Proof\,of\,Lemma\,3.2}.  Recall, for instance from \cite{milne1}*{Th.\,19.1}, that the size of
  $A(\Ff_p)$ is the value $f(1)$ of the so-called Weil polynomial
  of~$A$. The Weil polynomials
  for the abelian surface constructed in~\cite{bhls} are expressed in
  the form
  \[
    f = X^4 -aX^3+ (b + 2p)X^2 - apX + p^2
  \]
  for some integers~$a$ and~$b$ (see~\cite{bhls}*{(3.1)}).  On the other
  hand, the criterion of~\cite{hnr} concerns a Weil polynomial of the
  form
  \[
    f=X^4+a_1X^3+b_1X^2+a_1pX+p^2
  \]
  for integers~$a_1$ and~$b_1$.

  Suppose that the isogeny class of~$A$ does \emph{not} contain a
  jacobian.  Then~\cite{hnr}*{Th.\,1.2} proves that one of the
  following two situations occurs.
  
  \textbf{Case 1.} \emph{The abelian surface splits (up to isogeny) as a
    product of two elliptic curves.} Then, by~\cite{hnr}*{Th.\,1.2}, the
  associated Weil polynomials are of the form
  \[
    (X^2-sX+p)(X^2-tX+p)
  \]
  for some integers~$s$ and~$t$, and we have either $|s-t|=1$ or $s=t$
  (see~\cite{hnr}*{table\,1}, noting that the ordinary case corresponds
  to $p$-rank equal to~$2$).

  If $s=t$, then $m=(p-s+1)^2$ is a square, so this can only occur for
  $\ll x^{1/2}$ values of~$m$. If $s-t=1$ (and symmetrically if
  $s-t=-1$), then
  \[
    m=(p-(t+1)+1)(p-t+1)=(p-t+1)^2-(p-t+1)
  \]
  is of the form $u^2-u$ for some integer~$u$, and this can only occur
  for $\ll x^{1/2}$ values of~$m$.
  \par
  
  \textbf{Case 2.} \emph{The abelian surface is simple.}  Then
  by~\cite{hnr}*{table\,2}, we first see that $b_1<0$ (this is part of
  the information from the first three lines of this table).  However,
  the construction in~\cite{bhls}*{Lemma\,3.2} for primes $p>7$
  provides polynomials with $b\geq -2p-2$, and hence $b_1=b+2p\geq -2$. Thus, the only possibilities would be $b_1=-1$ or
  \hbox{$b_1=-2$}. The second case is not possible for~$p>7$, because
  from~\cite{hnr}*{table\,2}, we have either \hbox{$b_1=1-2p\leq -13$}, or
 \hbox{$b_1=2-2p\leq -12$}, or all primes dividing~$b_1$ are
  $\equiv 1\mods{3}$. In the first case, from \cite{hnr}*{table\,2},
  we see that $a_1^2=p+b_1=p-1$. Thus, the prime $p$ must be of the
  form $s^2+1$ for some integer~$s\geq 1$, and for such a prime, the
  Weil polynomial of~$A$ can only be one of the two polynomials
  \[
    X^4+sX^3-X^2+psX+p^2,\quad\quad X^4-sX^3-X^2-psX+p^2.
  \]

  Note that~$p$ is of size $O(m^{1/2})$ by the estimate $|A(\Ff_p)|\sim |\Ff_p|^{\dim(A)}$; the number of primes~$p$ of the
  form $s^2+1$ that are of this size is $O(m^{1/4})$ (just counting all
  integers which can be of the form $s^2+1$), and each prime leads to at
  most two excluded values of $m=f(1)$. Hence, the number of $m\leq x$
  where this second case may occur is $O(x^{1/4})$.
\end{proof}

\begin{remark}\label{rm-n}
  (1) We note that standard conjectures on gaps between primes suggest
  strongly that \emph{all} positive integers (except maybe squares or integers of the form
  $r(r\pm 1)$ and finitely many exceptions) should arise as the size of a
  jacobian graph.
  
  (2) Here are some comments on values of the orders of jacobian graphs
  in genus~$1$. For simplicity, we focus on the cases where the base
  field is~$\Ff_p$ for some prime~$p$.

  \begin{enumerate}
  \item If $g=1$ and $\mfm$ is a double point, then from the exact
    sequence~(\ref{eq-exact-2}), we see that the number of vertices is
    $p|C(\Ff_p)|$, while the degree is~$|C(\Ff_p)|-1$. It follows from
    the Hasse bound that $|C(\Ff_p)|=p+1-a_C$, with
    $|a_C|\leq 2\sqrt{p}$, and from the work of Deuring and Honda--Tate
    that every integer~$a$ satisfying this condition occurs for some
    suitable curve~$C$ (see, for instance, the
    accounts by Waterhouse in~\cite{waterhouse}*{Th.\,4.1} and by Serre
    in~\cite{serre-points}*{\S\,2.6}). So the integers we obtain for
    these jacobian graphs are very special, since they are those of the
    form $pn$ for some prime~$p$ and some integer $n$ satisfying
    $|n-1-p|\leq 2\sqrt{p}$, whereas it is well-known that most integers
    do not have a prime factor of size close to their square root.

  \item If $g=1$ and $\mfm$ is a sum of two distinct points, from the
    exact sequence~(\ref{eq-exact-1}) we see that the number of vertices
    is $(p-1)|C(\Ff_p)|$, while the degree is~$|C(\Ff_p)|-2$. So we
    again have very special integers, those of the form $nm$ with $n+1$
    prime and~$|n+2-m|\leq 2\sqrt{n+1}$.
  \end{enumerate}
\end{remark}

Finally, we conclude this section by commenting on the issue of the
``effectivity'' of our construction.  There are a number of aspects:
\begin{enumerate}
\item Given a prime number~$p$, it is straightforward to ``write down''
  a curve~$C$ over $\Ff_p$ of genus~$1$ or~$2$, using equations of the
  form $y^2=f(x)$, and to find rational points to describe a
  modulus~$\mfm$ (in the genus~$1$ case). Because curves of genus~$2$
  and their jacobians are extensively used in cryptography, there are
  many well-developed algorithms to work with them, and thus to
  construct the corresponding jacobian graphs efficiently. The case of
  generalized jacobians is less developed, but Déchène's
  thesis~\cite{dechene}*{\S\,5.3} provides all basic algorithms for this
  purpose. Given the size of~$p$, the number of vertices of the jacobian graph associated to~$C$ is well-understood by the Riemann hypothesis over
  finite fields, and is of order~$p^2+O(p^{3/2})$.
  
\item One might also be interested in constructing a
  jacobian graph with a \emph{given} number~$n$ of vertices. If~$n$ can
  be factored and has the form given in Remark~\ref{rm-n}\,(2), this can
  be done efficiently using elliptic curves (see the references in~\cite{bhls}*{\S\,2}). On the other hand, the
  work of Bröker, Howe, Lauter and
  Stevenhagen~\cite{bhls}*{Th.\,1.1,\,\S\,3} indicates that the
  corresponding problem for genus~$2$ is more difficult.
\end{enumerate}

\begin{remark}
  The case~$g=0$ and $\deg(\mfm)=3$ also gives graphs with
  $\dim(J_{\mfm})=2$, which do have some interest, but these are in fact
  classical. Indeed, as shown in~\cite{ffk2}, the
  sets~$S_n$ then correspond to the
  five known infinite families of ``densest'' Sidon sets (see, for
  instance, the discussion of Eberhard and Manners~\cite{e-m}), and
  moreover, their spectrum is much simpler, consisting of only a bounded
  number of eigenvalues as $n$ varies.  More precisely, one can show
  the following (see previous discussions, e.g., in~\cites{mw,ks}): the
  spectrum of~$\Gamma_n(C,\mfm)$ in this case consists of
  \begin{enumerate}
  \item the eigenvalue~$1$ with multiplicity~$1$; 
  \item two eigenvalues of size asymptotic to $-1/|k_n|^{1/2}$
    and~$1/|k_n|^{1/2}$ with multiplicity~$\sim |k_n|^2/2$;
  \item a single eigenvalue of size~$O(1/|k_n|)$ with
    multiplicity~$\sim |k_n|$.
  \end{enumerate}

  In terms of exponential sums, using the descriptions of the sets~$S_n$
  in~\cite{ffk2}, this boils down to the fact that the sums
  \[
    \sum_{\substack{x\in k_n^{\times}\\x\not=1}}\chi_1(x)\chi_2(1-x),
    \quad\quad
    \sum_{x\in k^{\times}}\chi(x)\psi(ax),\quad\quad
    \sum_{x\in k_n}\psi(ax+bx^2),
  \]
  (where $\chi$, $\chi_1$, $\chi_2$ are multiplicative characters
  of~$k_n$,  $\psi$ is a non-trivial additive character of~$k_n$, and
  $(a,b)\in k_n^2$) are all ``generically'' of modulus exactly
  $\sqrt{|k_n|}$ (these are, respectively, Jacobi sums, Gauss sums and
  quadratic additive Gauss sums).
    
  In this case, there is no further interesting information to be
  gathered from the distribution of the spectrum, which is very rigid:
  multiplying the non-trivial eigenvalues by $\sqrt{|k_n|}$, the limit
  of the empirical spectral measures is simply the atomic measure
  $\demi (\delta_{-1}+\delta_1)$.
\end{remark}

\section{Jacobian graphs: semicircular spectrum}

We use the basic notation from the previous section. 
We recall here the main equidistribution result of~\cite{ffk}*{Ch.\,4},
as applied to the situation corresponding to the jacobian graphs we
consider (this is partly sketched at the end
of~\cite{ffk}*{\S\,11.1}). The outcome of the general theory, combined
with some specific analysis of the current situation, is the following
equidistribution property. For $n\geq 1$ and $\chi\in
\widehat{J}_{\mfm}(k_n)$, we denote
\[
  U_n(\chi)=\frac{1}{|k_n|^{1/2}}\sum_{x\in (C\setminus \mfm)(k_n)}
  \chi(i_{\delta}(x))
\]

Except for the normalization, these are the exponential sums that
determine the spectrum of~$\Gamma_n(C,\mfm)$ by Proposition~\ref{pr-1}.

\begin{theorem}\label{th-equi}
  Let~$(C,\mfm)$ be such that $\dim(J_{\mfm})=2$ and $g=1$ or~$2$.
  Assume that the center of the symmetric Sidon set
  $i_{\delta}(C\setminus\mfm)$ is~$0\in J_{\mfm}$. If~$g=1$ and $\mfm$
  is a double point, assume further that the characteristic of~$k$ is
  $\geq 5$.

  There exist two compact Lie groups $K^g$ and~$K^a$, contained in
  $\SU_2(\Cc)$, with $K^g$ a normal subgroup of~$K^a$, such that the
  following properties hold:
  \begin{enumth}
  \item\label{it-equid1} For any continuous and bounded function
    $f\colon \Cc\to \Cc$, we have
    \[
      \lim_{N\to +\infty}\frac{1}{N}\sum_{1\leq n\leq N}
      \frac{1}{|J_{\mfm}(k_n)|}\sum_{\chi\in \widehat{J}_{\mfm}(k_n)}
      f(U_n(\chi))=\int_{K^a}f(\Tr(x))dx.
    \]
    
  \item If $K^a=K^g$, then for any continuous and bounded function
    $f\colon \Cc\to \Cc$, we have
    \[
      \lim_{n\to +\infty} \frac{1}{|J_{\mfm}(k_n)|}\sum_{\chi\in
        \widehat{J}_{\mfm}(k_n)} f(U_n(\chi))=\int_{K^a}f(\Tr(x))dx.
    \]

  \item\label{it-equid3} Furthermore, either $K^a=K^g=\SU_2(\Cc)$ or $K^a$ is a finite
    subgroup of~$\SU_2(\Cc)$ isomorphic to one of $\SL_2(\Ff_3)$,
    $\GL_2(\Ff_3)$ or $\SL_2(\Ff_5)$ embedded in $\SU_2(\Cc)$ by an
    irreducible representation.
  \end{enumth}

  In these statements, $dx$ denotes the unique probability Haar measure
  on the group~$K^a$.
\end{theorem}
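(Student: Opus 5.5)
We will deduce this from the general equidistribution theorem of Forey--Fresán--Kowalski~\cite{ffk}*{Ch.\,4}, applied on the commutative algebraic group $J_{\mfm}$ to the perverse sheaf $M=i_{\delta,*}\bQl[1](1/2)$. Since $i_\delta$ is a closed immersion of the smooth curve $C\setminus\mfm$, $M$ is a geometrically simple perverse sheaf, pure of weight~$0$. Its trace function at $x\in J_{\mfm}(k_n)$ is $-|k_n|^{-1/2}\mathbf{1}_{S_n}(x)$, so the Mellin transform $\sum_x \chi(x)\,t_M(x)$ equals $-U_n(\chi)$; the sign is harmless because $K^a\subset\SU_2(\Cc)$ contains $-1$. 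For $\chi$ outside a ``small'' exceptional set of characters (in the sense of~\cite{ffk}, with density tending to zero), the cohomology $H^*_c(J_{\mfm}\otimes\bar k,M\otimes\mcL_\chi)$ is concentrated in degree~$0$ and pure of weight~$0$. Its dimension equals the tannakian rank, namely the Euler characteristic $r=|\chi_c(C\setminus\mfm)|$, which is $2g-2+\deg\mfm=2$ in both cases $(g,\deg\mfm)=(2,0),(1,2)$. Thus $U_n(\chi)=\pm\Tr(\Theta_{\chi})$ for a conjugacy class $\Theta_\chi$ in the arithmetic tannakian group. The theorem of~\cite{ffk} then gives equidistribution of $\Theta_\chi$ in the space of conjugacy classes of a maximal compact subgroup $K^a$ of the arithmetic group, on average over $n\le N$, with $K^g$ corresponding to the geometric group, which is normal in the arithmetic one. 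When $K^a=K^g$, no averaging over $n$ is needed. The exceptional characters contribute $o(1)$ because $f$ is bounded. These steps give~\ref{it-equid1} and~(2).

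It remains to show that the groups lie in $\SU_2(\Cc)$ and to identify them. The hypothesis that the center is $0$ means $S=-S$, that is, $[-1]^*M\simeq M$. Combined with the Sidon property, this should give a self-duality $D(M)\simeq[-1]^*M$ of symplectic type. Since the rank is~$2$, the tannakian group then lies in $\mathbf{Sp}_2=\SL_2$, and the compact forms lie in $\SU_2(\Cc)$. The parity of the pairing can be read from the sign in the determinant, or from the fact that $M$ has odd degree~$-1$. Next, a connected reductive subgroup of $\SL_2$ acting irreducibly must be all of $\SL_2$. Otherwise the identity component is a torus or trivial, and the group is finite or normalizes a torus. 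We will rule out the torus-normalizer case and the cyclic/dihedral finite cases using the fourth moment. Because $S_n$ is a Sidon set, $\sum_\chi|U_n(\chi)|^4$ is computable: its average is $2+o(1)$, which is the fourth moment of $\Tr$ on an irreducible $2$-dimensional representation with irreducible $\mathrm{Sym}^2$-type behavior. Concretely, $\int|\Tr|^4=2$ forces $V\otimes V^\vee$ to split as $1\oplus(\text{irreducible of dim }3)$, which excludes diagonal tori, their normalizers and dihedral groups. The finite subgroups of $\SU_2(\Cc)$ whose adjoint representation is irreducible are the binary tetrahedral, octahedral and icosahedral groups, namely $\SL_2(\Ff_3)$, $\GL_2(\Ff_3)$ (in its $\SU_2$-embedding) and $\SL_2(\Ff_5)$. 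If $K^g$ is infinite, its identity component is $\SU_2$, so $K^a=K^g=\SU_2(\Cc)$. This gives~\ref{it-equid3}.

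The main obstacle will be the bookkeeping with characteristic hypotheses: verifying that $M$ satisfies the hypotheses of~\cite{ffk} and that the exceptional set is negligible. Here the condition $\mathrm{char}(k)\ge5$ in the double-point case will presumably enter, through tameness or wild ramification of the $\mathbf{G}_a$-part of the exact sequence~(\ref{eq-exact-2}) so that Artin--Schreier twists behave generically. The other delicate point is checking the symplectic self-duality.
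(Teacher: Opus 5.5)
Your outline follows the paper's proof. You take the a priori equidistribution from \cite{ffk}*{Th.\,4.8,\,4.15} for $M=i_{\delta*}\bQl[1](1/2)$, then show generic rank $2$, then $K^a\subset\SU_2(\Cc)$ from the centered symmetric Sidon set, then $M_4=2$, then classify. Your classification via irreducibility of $\mathfrak{sl}(V)$ is a legitimate elementary substitute for Larsen's alternative plus Katz's non-induction theorem. It even settles your hesitation about symplectic versus orthogonal type: an invariant symmetric form would split the line $\Lambda^2V$ off $\mathfrak{sl}(V)$.

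The genuine gap is the rank computation for $g=1$, $\mfm=2(x_0)$. Your identity $r=|\chi_c(C\setminus\mfm)|=2g-2+\deg\mfm$ is false there. Here $C\setminus\mfm=C\setminus\{x_0\}$ has $\chi_c=-1$, because the Euler characteristic sees $|\mathrm{supp}(\mfm)|$, not $\deg(\mfm)$. By Grothendieck--Ogg--Shafarevich, for $\chi\neq 1$ one has $\dim H^1_c((C\setminus\{x_0\})_{\bar k},\mcL_\chi)=1+\swan_{x_0}(\mcL_\chi|C\setminus\{x_0\})$. So you must show that this Swan conductor is \emph{exactly} $1$ for every $\chi$ non-trivial on the kernel $\Gg_a$ of $J_{\mfm}\to C$; the $|C(k_n)|$ characters trivial on it are negligible. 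A larger conductor would give $r>2$, and $K^a\subset\SU_2(\Cc)$ would be lost.

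This is the paper's Lemma~\ref{lm-swan}. Take $s(z)=(z)-(\infty)$ and a second partial section $\sigma$ regular at $x_0$. Their difference $\Delta=s-\sigma$ is a rational function $C\setminus\{x_0,\infty\}\to\Gg_a$ with a \emph{simple} pole at $x_0$. Hence near $x_0$, $\mcL_\chi$ is the Artin--Schreier sheaf of $\Delta$ tensored with a sheaf lisse at $x_0$, which gives Swan conductor $1$. This is where the Weierstrass model and $\mathrm{char}(k)\geq 5$ are actually used; your proposal only speculates about the role of that hypothesis. Equivalently, you could invoke that a generic character of $J_{\mfm}$ has conductor exactly $\mfm$, but that too needs to be stated and justified.

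Two smaller points. First, you show $K^a=K^g=\SU_2(\Cc)$ only when $K^g$ is infinite, so you must also exclude $K^a=\SU_2(\Cc)$ with $K^g$ finite. The paper does this using that $K^a/K^g$ is abelian \cite{ffk}*{Prop.\,3.41} while $\SU_2(\Cc)$ is perfect. Alternatively, a finite normal subgroup of $\SU_2(\Cc)$ lies in $\{\pm1\}$, and $\SU_2(\Cc)/\{\pm1\}$ is non-abelian. Second, there is no ``$\SU_2$-embedding'' of $\GL_2(\Ff_3)$. The only involution in $\SU_2(\Cc)$ is $-1$, whereas $\GL_2(\Ff_3)$ has non-central involutions such as $\mathrm{diag}(1,-1)$. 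The order-$48$ group that occurs is the binary octahedral group. This slip is inherited from the statement and is harmless downstream, since that group also has $18$ elements of trace $0$.
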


\begin{proof}
  The \emph{a priori} equidistribution (i.e., the existence of compact
  Lie groups $K^a$ and $K^g$, with $K^g$ a normal subgroup of $K^a$ and
  $K^a\subset \Un_r(\Cc)$ for some integer~$r\geq 0$ for which (1)
  holds) is one of the main results of~\cite{ffk}, specifically of
  Theorem~4.8 of \loccit, applied to the object
  $M=i_{\delta*}\bQl[1](1/2)$ of (the derived category of constructible
  sheaves on) $J_{\mfm}$ whose trace function is the (normalized)
  characteristic function of the image of~$i_{\delta}$. If $K^a=K^g$, we
  get the similar \emph{a priori} statement (2) from Theorem~4.15 of \loccit 

  To conclude the proof of the theorem, we first check that the
  integer~$r$ here is equal to~$2$. Recall that, for any
  $n\geq 1$ and character~$\chi\in \widehat{J}_{\mfm}(k_n)$, there is an
  associated lisse character sheaf $\mcL_{\chi}$ on the generalized
  jacobian $J_{\mfm}$ (over $k_n$) with trace function given
  by~$\chi$. The theory (see~\cite{ffk}*{Prop.\,3.17}) implies that
  there exist subsets $\mcX_n$ of characters of
  $\widehat{J}_{\mfm}(k_n)$ such that the equality
  \[
    r=\dim H^0_c(J_{\mfm,\bar{k}},M\otimes\mcL_{\chi})=
    \dim H^1_c((C\setminus \mfm)_{\bar{k}},\mcL_{\chi}|(C\setminus\mfm)),
  \]
  holds for all $\chi\in\mcX_n$, and $|\mcX_n|\sim |J_{\mfm}(k_n)|$ as $n\to
  +\infty$. 
 
  Since~$C$ is a curve, this dimension can be computed by means of the Grothendieck--Ogg--Shafarevich formula for the Euler--Poincaré characteristic (see, e.g.,~\cite{ffk}*{Th.\,C.2}): for any
  character~$\chi\not=1$ in $\mcX_n$ one has
  \[
    \dim H^1_c((C\setminus
    \mfm)_{\bar{k}},\mcL_{\chi}|(C\setminus\mfm))=(2g-2+|\mfm|)+\sum_{x\in \mathrm{supp}(\mfm)}\swan_x(\mcL_{\chi}|(C\setminus\mfm) )
  \]
  (this is because both $H^0_c$ and $H^2_c$ vanish if $\chi$ is
  non-trivial, the latter due to the fact that the image of~$i_{\delta}$
  generates $J_{\mfm}$, and hence $\mcL_{\chi}$ cannot be geometrically trivial on the
  image of $i_{\delta}$), where the first term arises as the rank
  of~$\mcL_{\chi}$, which is~$1$, multiplied by minus the
  Euler--Poincaré characteristic of~$C\setminus\mfm$.
  
  We now consider separately the different cases of $(g,\mfm)$ which may
  occur.
  
  \begin{enumerate}
  \item If~$g=1$ and the support of $\mfm$ consists of two distinct
    points, then the Swan conductors vanish (because the character sheaf
    is tame), and hence
    \[
      r=\dim H^1_c((C\setminus
      \mfm)_{\bar{k}},\mcL_{\chi}|(C\setminus\mfm))=(2-2+2)=2
    \]
    for every non-trivial $\chi$. 
        
  \item If~$g=1$ and the support of $\mfm$ consists of one double point,
    say~$x_0$, then the formula
    \[
      \swan_{x_0}(\mcL_{\chi}|C\setminus\{x_0\} )=
      \begin{cases} 0&\text{ if $\chi$ is trivial on the kernel of
          $J_{\mfm}\to C$},
        \\
        1&\text{ if $\chi$ is non-trivial on the kernel of
          $J_{\mfm}\to C$}.
      \end{cases}
    \]
    holds by Lemma~\ref{lm-swan} below (we use here the assumption that
    the characteristic is at least $5$), and hence
    \[
      \dim H^1_c((C\setminus
      \mfm)_{\bar{k}},\mcL_{\chi}|(C\setminus\mfm))=(2-2+1)+1=2,
    \]
    for $\chi$ non-trivial on the kernel of the projection, which
    implies again that $r=2$ (since there are only $|C(k_n)|$ characters
    trivial on the kernel of the projection, so $\mcX_n$ is not
    contained in this set for large enough $n$).
    
  \item If~$g=2$ and $\mfm=\emptyset$, then there is no Swan conductor to
    consider, and hence
    \[
      r=\dim H^1_c(C_{\bar{k}},\mcL_{\chi}|C)=(4-2)=2.
    \]
    for every non-trivial $\chi$. 
  \end{enumerate}

  Thus, we have shown that $K^g\subset K^a\subset \Un_2(\Cc)$ in all
  cases. Furthermore, the assumption that the image of~$i_{\delta}$ is a
  symmetric Sidon set with center~$0$ implies that $K^a$ is contained
  in~$\SU_2(\Cc)$ (see the proof of~\cite{ffk}*{Th.\,11.1\,(2)} for this
  argument, or note that it implies that the sums $U_n(\chi)$ are real).

  Moreover, because the image of~$i_{\delta}$ is a symmetric Sidon set,
  the fourth moment
  \[
    M_4=\int_{K^a}\lvert\Tr(x)\rvert^4dx
  \]
  of $K^a\subset\SU_2(\Cc)$ is equal to~$2$ (see Proposition 8.9 and
  the end of the proof of Theorem~11.1 of \loccit for this; in
  particular, note that although one would expect a fourth moment
  equal to~$3$ by a naive computation, the contribution of the trivial
  character means that the actual fourth moment is~$2$). Larsen's alternative (see~\cite{katz-larsen}*{Th.\,1.1.6}
  or~\cite{ffk}*{Th.\,8.5}) then implies that either \hbox{$K^a=\SU_2(\Cc)$},
  or~$K^a$ is a finite subgroup of~$\SU_2(\Cc)$. In the first case,
  we deduce that $K^a=K^g=\SU_2(\Cc)$, because one knows that
  $K^g$ is a normal subgroup of~$K^a$ with abelian quotient
  (see~\cite{ffk}*{Prop.\,3.41}), and $\SU_2(\Cc)$ has no such subgroup
  except itself.

  If $K^a$ is finite, then Katz has
  proved~\cite{katz-larsen}*{Th.\,1.3.2} that the condition
  $M_4=2$ implies that the representation $K^a\subset \SU_2(\Cc)$ is
  not induced, which means that~$K^a$ is then an irreducible primitive
  subgroup of~$\SU_2(\Cc)$. The classification of these subgroups is
  classical (it is a variant of the well-known classification of
  finite subgroups of $\SO_3$). The three possible primitive groups
  are often called the binary tetrahedral, octahedral and icosahedral
  groups, but to check that these are indeed the three groups we
  indicated, it is enough to check that the orders correspond and that
  $\SL_2(\Ff_3)$, $\GL_2(\Ff_3)$ and $\SL_2(\Ff_5)$ have faithful
  two-dimensional representations, since we know that $\SU_2(\Cc)$
  contains no other finite primitive subgroup; we refer the reader, e.g,
  to~\cite{leuschke-wiegand}*{Th.\,6.11} for this classification, and
  the other facts are elementary.  Thus the last part of assertion (3)
  follows.
\end{proof}

Our next step is a conditional deduction of Theorem~\ref{th-jacobian}
from Theorem~\ref{th-equi}.

\begin{proof}[Proof of~Theorem~\ref{th-jacobian}, assuming
  $K^a=K^g=\SU_2(\Cc)$ in Theorem~\ref{th-equi}]
  \phantom{}
  \par
  (1) The property that $\Gamma_n(C,\mfm)$ does not contain a copy of
  $K_{2,3}$ follows immediately from the fact that
  $i_{\delta}((C\setminus \mfm)(k_n))$ is a symmetric Sidon set
  (by~\cite{ffk2}) and an elementary computation, which is given for
  instance in~\cite{short}*{Prop.\,3.1}.
  
  (2) Let $\chi$ be a non-trivial character of $J_{\mfm}(k_n)$. From
  the fact that the object $i_{\delta*}\bQl[1](1/2)$ is mixed of
  weights $\leq 0$, so that $i_{\delta*}\bQl[1](1/2)\otimes
  \mcL_{\chi}$ is also mixed of weights~$\leq 0$ with trace function
  on~$J_{\mfm}(k_n)$ given by
  \[
    y\mapsto \begin{cases} \chi(i_{\delta}(x)) &\text{ if }
      y=i_{\delta}(x)\text{ for some } x\in (C\setminus \mfm)(k_n),
      \\
      0&\text{ otherwise},
    \end{cases}
  \]
  and that
  \[
    H^2_c((C\setminus\mfm)_{\bar{k}},\mcL_{\chi}|(C\setminus\mfm))=0
  \]
  (because the image of~$i_{\delta}$ generates $J_{\mfm}$), we first
  deduce the bound
  \[
    \Bigl|\sum_{x\in (C\setminus\mfm)(k_n)}\chi(i_{\delta}(x))
    \Bigr|\leq 2\sqrt{|k_n|}
  \]
so that the constant~$c$ of
  Proposition~\ref{pr-max-spectrum} can be taken to be equal to~$r=2$.
  It follows that the largest absolute value of a non-trivial
  eigenvalue of $\Gamma_n(C,\mfm)$ is at most
  $2\frac{\sqrt{|k_n|}}{d_n}$.  Since the Riemann hypothesis for curves
  implies that
  \[
    d_n=|k_n|-|\mfm|+O(|k_n|^{1/2}),
  \]
  we obtain Theorem~\ref{it-ram1}.

  (3) We now prove Theorem~\ref{it-ram2}. For this, denote again
  $d_n=|(C\setminus\mfm)(k_n)|$. Since we saw that the non-trivial
  eigenvalues of the Markov operator have absolute value
  \[
    \leq \frac{2\sqrt{|k_n|}}{d_n},
  \]
  the graph~$\Gamma_n(C,\mfm)$ is a Ramanujan graph if
  \[
    \frac{2\sqrt{|k_n|}}{d_n}\leq 2\frac{\sqrt{d_n-1}}{d_n},
  \]
  i.e. if $d_n\geq |k_n|+1$. We write
  \[
    d_n=|C(k_n)|-|\mfm|=|k_n|+1-|\mfm|-a_C(k_n),
  \]
  so that it suffices to have $a_C(k_n)\leq -|\mfm|$ to obtain the
  desired inequality. According to the Riemann hypothesis for counting
  points on curves over finite fields, we can write
  \[
    a_C(k_n)=
    \begin{cases}
      2|k_n|^{1/2}\cos(2\pi n\theta_1),&\text{ if }
      g=1,\\
      2|k_n|^{1/2}(\cos(2\pi n\theta_1)+\cos(2\pi n\theta_2)),& \text{ if
      } g=2,
    \end{cases}
  \]
  for some real numbers~$\theta_1$ (resp. $\theta_1$ and~$\theta_2$)
  in the interval~$[0,\demi]$.
  
  In the case~$g=2$, we see that~$\Gamma_n(C,\mfm)$ is a Ramanujan graph
  if~$a_C(k_n)\leq 0$, and for~$g=1$, it suffices
  that~$a_C(k_n)\leq -2$. Roughly speaking, this will occur with
  probability $1/2$ or very close to~$1/2$.

  More precisely, assume that~$C$ is ordinary. We then
  have~$\theta_1\not=0$ (resp. $\theta_1\not=0$ and~$\theta_2\not=0$
  if~$g=2$). Consider the set~$X$ in~$(\Rr/\Zz)^g$ of points
  $(\{n\theta_i\})_{1\leq i\leq g}$, where $\{x\}$ denotes the image
  in~$\Rr/\Zz$ of a real number~$x$.  By the Kronecker--Weyl
  equidistribution theorem (see, e.g.,~\cite{pnt}*{Th.\,B.6.5}), the
  set~$X$ is equidistributed as $n\to +\infty$ according to the
  uniform probability measure on its closure~$\bar{X}$, which is a
  closed subgroup of~$(\Rr/\Zz)^g$.
  
  The assumption that~$\theta_i\not=0$ implies straightforwardly that
  \[
    \lim_{N\to +\infty}
    \frac{1}{N}\sum_{1\leq n\leq N}\frac{a_C(k_n)}{\sqrt{|k_n|}}=0,
  \]
  and therefore, by equidistribution, the continuous function
  $f\colon \bar{X}\to \Rr$ defined by
  \[
    f(x)=\sum_{i=1}^g\cos(2\pi x_i)
  \]
  has integral~$0$ on~$\bar{X}$.  Since there are points $(x_i)$
  in~$\bar{X}$ with $f(x)>0$ (e.g. $x=0$), it follows that~$f$ must
  also take negative values on a set of positive measure, which means
  that there exists~$c<0$ such that the measure of the set of
  $(x_i)\in\bar{X}$ with~$f(x)<c$ is positive. By equidistribution,
  there is therefore a positive density set of values of~$n$ such that
  \[
    a_C(k_n)\leq -\frac{c}{2}\sqrt{|k_n|},
  \]
  which is $\leq -2$ if~$|k_n|$ is large enough.

  (5) We now proceed to the proof of Theorem~\ref{it-equi}. We first
  perform a reduction to the case considered in
  Theorem~\ref{th-equi}. Let~$a\in J_{\mfm}(k)$ be the center of the
  symmetric Sidon set $S=i_{\delta}(C\setminus\mfm)(k)$. By assumption,
  there exists $b\in J_{\mfm}(k)$ such that $a=2b$. Let~$S'=-b+S$. Since
  $S=a-S$, we have
  \[
    S'=-b+S=-b+a-S=b-S=-S',
  \]
  i.e., $S'$ is a symmetric Sidon set with center~$0$. In fact, it
  arises as the image of the Abel--Jacobi embedding associated to
  $\delta'=\delta-b$ (viewing the point~$b$ on the generalized jacobian
  as a divisor of degree~$0$). By the last part of
  Proposition~\ref{pr-1}, combined with the fact that
  $J_{\mfm}(\bar{k})$ has at most $2^{2g}\leq 16$ characters of
  order~$2$, the equidistribution property (as $n\to +\infty$) for the
  spectrum of the sum graph associated
  to~$i_{\delta}(C\setminus \mfm)(k_n)$ is equivalent to that
  for~$i_{\delta'}(C\setminus \mfm)(k_n)$. Replacing $\delta$
  with~$\delta'$, we can therefore assume that~$a=0$ and apply
  Theorem~\ref{th-equi}.

  From this theorem, under our assumption that $K^a=K^g=\SU_2(\Cc)$, the
  sums $U_n(\chi)$ are equidistributed as $n\to+\infty$ like the image
  of the Haar measure by the trace map $\SU_2(\Cc)\to [-2,2]$. It is
  classical that this image is none other than the semicircle measure~$\mu_{\mathrm{sc}}$ (\cite{bourbaki}*{p.\,58,\,exemple}).

  Next, by Proposition~\ref{pr-1}, an eigenvalue $\lambda$ in the
  spectrum of $\Gamma_n(C,\mfm)$ may be either
  \[
    \lambda=\frac{|k_n|^{1/2}}{|(C\setminus\mfm)(k_n)|}U_n(\chi)
  \]
  for $\chi$ of order~$2$, or
  \[
    \lambda=\frac{|k_n|^{1/2}}{|(C\setminus\mfm)(k_n)|}|U_n(\chi)|,\quad\text{
      or }\quad
    \lambda=-\frac{|k_n|^{1/2}}{|(C\setminus\mfm)(k_n)|}|U_n(\chi)|,
  \]
  for pairs $(\chi,\bar{\chi})$ of non-real characters. But the
  condition that $i_{\delta}(C\setminus \mfm)$ is a symmetric Sidon
  set with center~$0$ implies that the sums $U_n(\chi)$ are real (and
  hence $S_n(\bar{\chi})=U_n(\chi)$). Thus the number
  $\lambda d_n/\sqrt{d_n-1}$ is of the form
  \[
    \eps \frac{|(C\setminus\mfm)(k_n)|}
    {\sqrt{|k_n|(|(C\setminus\mfm)(k_n)|-1)}}
    |U_n(\chi)|
  \]
  for $\eps=1$ or $\eps=-1$. Since $|k_n|\sim |(C\setminus\mfm)(k_n)|$
  as $n\to +\infty$, and since $U_n(\chi)$ becomes
  $\mu_{\mathrm{sc}}$-equidistributed as seen previously, we obtain
  the equidistribution of $\lambda d_n/\sqrt{d_n-1}$ to
  $\mu_{\mathrm{sc}}$ using the fact that this measure is symmetric
  (i.e., invariant under $x\mapsto -x$).

  (4) Finally, once we know the equidistribution theorem with $K^a=K^g=\SU_2(\Cc)$, we obtain the bound~(\ref{it-w1}) for the
  Wasserstein distance by applying the bound of Kowalski and
  Untrau~\cite{k-u}*{\S\,4.3}.
\end{proof}

Thus, the proof of Theorem~\ref{th-jacobian} will be concluded by the
following result, which establishes the final condition
$K^a=K^g=\SU_2(\Cc)$ in the situations considered in
Theorem~\ref{it-equi}.

\begin{theorem}
  Let~$(C,\mfm)$ be such that $\dim(J_{\mfm})=2$ and $g=1$ or~$2$. The
  group~$K^a$ is infinite, and hence we have $K^a=K^g=\SU_2(\Cc)$, under any
  of the following conditions:
  \begin{enumerate}
  \item[\textup{(i)}] $g=2$.
  \item[\textup{(ii)}] $g=1$, $\mfm$ is a double point and
    $|C(k)|=|k|+1-a_C(k)$ for some integer~$a_C(k)$ which is either~$0$ or has
    multiplicative order at least~$5$ in $k^{\times}$.
  \item[\textup{(iii)}] $g=1$ and the set of unramified characters for
    the object $i_{\delta,*}\bQl[1](1/2)$ on~$J_{\mfm}$ coincides with the
    set of characters that are trivial on the kernel of the 
    projection $J_{\mfm}\to C$.
  \end{enumerate}
\end{theorem}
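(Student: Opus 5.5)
By Theorem~\ref{th-equi}\,(3), $K^a$ is either $\SU_2(\Cc)$ or a finite primitive subgroup isomorphic to $\SL_2(\Ff_3)$, $\GL_2(\Ff_3)$ or $\SL_2(\Ff_5)$. In each case (i)--(iii) I will therefore argue by contradiction: suppose $K^a$ is finite and produce a conjugacy class of $K^a$ whose trace is impossible in these three groups. The traces of elements of these groups lie in the finite set $\{0,\pm1,\pm2,\pm\sqrt2,(\pm1\pm\sqrt5)/2\}$, and every element has order dividing $8$, $6$ or $10$ (orders in $\{1,2,3,4,6,8,5,10\}$). The natural source of a forbidden element is a Frobenius at a character that ``sees'' the curve directly: characters of $J_{\mfm}(k_n)$ pulled back from a quotient, or characters at which the monodromy is controlled by the Frobenius eigenvalues of $C$. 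By the general theory of~\cite{ffk}, for an unramified character $\chi$ the value $U_n(\chi)$ is the trace of a conjugacy class of $K^a$. If $K^a$ is finite, this forces $U_n(\chi)$ into the finite set above, and forces the associated Frobenius element to have bounded order. In particular $U_{mn}(\chi\circ N)$ is periodic in $m$ with period dividing $120$.

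\textbf{Case (i), $g=2$.} Take $\chi$ trivial; more usefully, take characters close to trivial. I would instead use the trivial character's neighbourhood via the weight structure: for $\chi=1$ the sum $U_n(1)=|C(k_n)|/|k_n|^{1/2}$ is unbounded, so it gives nothing. The better choice is to use the determinant. The tensor-induced structure of~\cite{ffk} says the Frobenius at $\chi$ is conjugate to an element whose eigenvalues are the normalized eigenvalues of Frobenius on $H^1(C_{\bar k},\mcL_\chi)$. Taking $\chi$ of finite order $\ell^m$ coming from $J[\ell^m]$ (over $k_n$), the $L$-function of $\mcL_\chi$ on $C$ is a factor of the zeta function of the associated cyclic étale cover $C'\to C$. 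The Prym part has Weil numbers which, after normalization, cannot all be roots of unity of order dividing $120$ for all such covers. Concretely, I would pick unramified $\chi$ with $\chi^2=1$, corresponding to a double cover $C'\to C$. Its Prym is an abelian surface $P$, and finiteness of $K^a$ would make $P$ supersingular-like: all normalized Frobenius eigenvalues are roots of unity, for all of the (many) quadratic characters and all $n$. I would then derive a contradiction, for instance from the fact that the product over all $\chi\in\widehat J(k_n)[2]$ of these $L$-functions recovers the zeta function of the maximal $2$-elementary cover, and hence ultimately the zeta function of $C$ with unbounded averages. This is the step I expect to be the main obstacle. A cleaner route I would try first is a moment computation: finiteness implies $\frac1{|J|}\sum_\chi |U_n(\chi)|^{2k}$ equals a fixed finite-group moment. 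A sixth or eighth moment can be computed geometrically, since it counts solutions of $a_1+a_2+a_3=b_1+b_2+b_3$ on $C$, i.e.\ points on a threefold related to $\mathrm{Sym}^3C\to J$. That count differs from the $\SU_2$ value ($5$ for the sixth moment, minus the trivial-character correction), and one would check that it differs from the values for the three finite groups; for $g=2$ the fibres of $\mathrm{Sym}^3 C\to J$ are $\mathbf P^1$'s, which makes this count computable.

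\textbf{Cases (ii) and (iii), $g=1$.} Here I would exploit characters trivial on the kernel of $J_{\mfm}\to C$, which are exactly characters of $C(k_n)$. By Lemma~\ref{lm-swan} these have Swan conductor $0$, and the dimension drops to $1$, so they are ramified. Condition (iii) asserts that the unramified characters are exactly the others. This means the arithmetic monodromy contains the ``local'' monodromy at the ramified locus, i.e.\ $K^a$ contains the torus-like image attached to the Frobenius of $C$ acting via the $\ell$-adic Tate module. In case (ii) I would make this explicit. On the additive kernel $k_n$, twisting by $\psi$ gives sums that are Kloosterman-like on $C$, and the determinant of the Frobenius on $K^a$ relates to $a_C(k)$ through the normalized Frobenius eigenvalue of $C$. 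So the element $\mathrm{Frob}_k$ in $K^a/K^g$, or its action, produces a trace governed by $a_C(k)$ modulo $p$. If $a_C(k)=0$ we get an element forcing infiniteness directly. If $a_C(k)$ has multiplicative order $\geq5$, we get an element of order at least $5$ acting through scalars, incompatible with the projective images $A_4,S_4,A_5$. One checks order $5$ separately against $\SL_2(\Ff_5)$ via the determinant and trace pattern. The main difficulty there is identifying precisely which element of $K^a$ the quantity $a_C(k)$ controls.
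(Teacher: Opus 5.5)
\textbf{There is a genuine gap in each of the three cases.}

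\textbf{Cases (ii) and (iii).} The mechanism you propose does not exist. Nothing in your sketch produces an element of $K^a$ whose order is $\ord_p(a_C(k))$. Moreover, the only scalars in $\SU_2(\Cc)$ are $\pm1$, and $\SL_2(\Ff_5)$ does contain elements of order $5$ and $10$. So ``an element of order at least $5$ acting through scalars'' gives no contradiction. Likewise, the claim that (iii) forces $K^a$ to contain a ``torus-like image'' of the Frobenius of $C$ is unjustified.

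The correct ingredient you do state is that $U_n(\chi)$ is the trace of an element of $K^a$ for unramified $\chi$. The paper uses it in two quite different ways.
\begin{enumerate}
\item[(ii)] Here $\ord_p(a_C(k))$ enters only through the congruence $|C(k_n)|\equiv 1-a_C(k)^n\mods{p}$. Hence $p\mid |C(k_n)|$ only for $n$ in a set of density $\le 1/\ord_p(a_C(k))$, and for no $n$ at all if $p\mid a_C(k)$. For all other $n$ the extension of $C(k_n)$ by $k_n$ splits. Linear disjointness of $\Qq(\zeta_p)$ from the field of $|C(k_n)|$-th roots of unity then shows that \emph{no} sum $T_n(\chi)$ vanishes (Lemma~\ref{lm-nonzero}). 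On the other hand, each candidate finite group has at least a quarter of its elements of trace $0$ (Remark~\ref{rm-data}). Equidistribution averaged over $n$ would therefore force a proportion $\ge 1/4$ of vanishing sums. This comparison is where the threshold $5$ comes from.
\item[(iii)] Finiteness of $K^a$ makes $U_n(\chi)$ an algebraic integer for every unramified $\chi$. Apply Fourier inversion over the unramified characters at a point $y_0$ outside the image of $C\setminus\mfm$, using that the ramified characters are exactly the pull-backs of characters of $C(k_n)$. This shows that $|C(k_n)|/|k_n|^{1/2}$, hence $(a_C(k_n)-1)/|k_n|^{1/2}$, is an algebraic integer for every $n$. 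Comparing $n$ with $3n$ then makes $3/|k_n|^{1/2}$ an algebraic integer, which is a contradiction.
\end{enumerate}

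\textbf{Case (i).} You have no argument here either. You leave the Prym route open yourself, and the moment route fails as stated.
\begin{enumerate}
\item[$\bullet$] The binary icosahedral group $\SL_2(\Ff_5)\subset\SU_2(\Cc)$ has no nonzero invariants in $\mathrm{Sym}^j$ of the standard representation for $0<j<12$, since Klein's invariants have degrees $12,20,30$. Therefore $\int|\Tr|^{2k}$ equals the Catalan number, i.e.\ the $\SU_2(\Cc)$ value, for all $2k\le 10$.
\item[$\bullet$] The binary octahedral group already has sixth moment $5$, the same as $\SU_2(\Cc)$.
\item[$\bullet$] So no sixth or eighth moment can rule out $\SL_2(\Ff_5)$. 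The twelfth moment would require counting points on a ten-dimensional variety.
\item[$\bullet$] For the $2k$-th moment with $k\ge 3$, the trivial character contributes $\asymp |k_n|^{k-2}$ to the average. It is the main term, not a ``correction'', so you would need secondary terms of these point counts.
\end{enumerate}
The paper does not reprove (i); it cites \cite{ffk}*{Th.\,11.1}.
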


This will be proved in the next section, specifically in
Proposition~\ref{pr-infinite} for conditions (i) and~(iii), and
Proposition~\ref{pr-infinite2} for condition (ii). 

\begin{remark}
  If we construct a sequence $(\Gamma_m)$ of jacobian graphs
  associated to genus~$2$ curves over prime fields~$\Ff_p$ such that
  $\Gamma_m$ has an odd number~$m$ of vertices, then the equidistribution still holds as
  $m\to+\infty$, in the sense that the numbers
  \[
    \lambda \frac{d_m}{\sqrt{d_m-1}},
  \]
  where $d_m$ is the degree of~$\Gamma_m$ and $\lambda$ ranges over
  the non-trivial eigenvalues of~$\Gamma_m$, become equidistributed
  according to the semicircle law.

  This follows from the ``horizontal'' analogue
  of~\cite{ffk}*{Th.\,4.8} (compare with~\cite{ffk}*{Th.\,4.19}),
  taking into account that the complexity (in the sense of Sawin's
  quantitative sheaf theory~\cite{qst}) of the objects
  $i_{\delta*}\bQl$ on the varying jacobians is uniformly
  bounded. This, in turn, can be deduced from~\cite{qst}*{Prop.\,6.19}
  and the classical fact from algebraic geometry that there exists an
  integer~$N$ such that a curve of genus~$2$ and its jacobian both
  admit projective embeddings in a projective space of
  dimension at most $N$, with the image given by at most~$N$ equations of
  degree at most~$N$.
\end{remark}

\section{Complements on generalized jacobians}

We prove here the statements used in the previous section
concerning some of the properties of the group~$K^a$ in the case of
generalized jacobians associated to a genus~$1$ curve. We keep the
notation from the previous section.

\begin{lemma}\label{lm-swan}
  Assume that the characteristic of~$k$ is $\geq 5$.  Suppose that $C$
  has genus~$1$ and the modulus~$\mfm$ is a double point $2(x_0)$. Let
  $n\geq 1$ be an integer and let $\chi$ be a character of~$J_{\mfm}(k_n)$ which is non-trivial on the kernel of the projection
  $\pi\colon J_{\mfm}\to C$. Then
  \[
    \swan_{x_0}(\mcL_{\chi}|s(C\setminus \{x_0\}))=1. 
  \]
\end{lemma}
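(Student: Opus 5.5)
The plan is to compute the restriction of $\mcL_{\chi}$ to $C\setminus\{x_0\}$ through the local structure of the generalized jacobian at $x_0$, following Serre's \emph{Groupes algébriques et corps de classes}. The restriction is the pullback $i_{\delta}^*\mcL_{\chi}$ along the Abel--Jacobi map. By the defining property of generalized jacobians, it is a lisse rank-one sheaf on $C\setminus\{x_0\}$ whose ramification at $x_0$ is bounded by the modulus $\mfm=2(x_0)$. In terms of local class field theory: the character of $K_{x_0}^{\times}$ attached to $\mcL_{\chi}$ at $x_0$, where $K_{x_0}$ is the completed local field of $C_{\bar k}$ or of $C$ over $k_n$, is trivial on the higher unit group $U^{(2)}=1+\mathfrak{m}_{x_0}^2$. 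Its restriction to $U^{(1)}/U^{(2)}$ is the composite of the isomorphism $U^{(1)}/U^{(2)}\simeq \mathbf{G}_a$ (sending $1+ut\mapsto u$, with $t$ a uniformizer) with the restriction of $\chi$ to the kernel $\ker(\pi)\simeq\mathbf{G}_a$ of the exact sequence~(\ref{eq-exact-2}). This identification is exactly how the local symbol of Rosenlicht--Serre for the modulus $2(x_0)$ is constructed: $\ker(\pi)=U^{(1)}/U^{(2)}$, with the sign/identification fixed by the local symbol.

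The upper-numbering conductor is now read off. A character of $K^{\times}$ trivial on $U^{(2)}$ but nontrivial on $U^{(1)}$ has Artin conductor exactly $2$. Since it is nontrivial on $U^{(1)}$, it is wild, so its Swan conductor is $2-1=1$. If instead it were trivial on $U^{(1)}$, the sheaf would be tame and the Swan conductor $0$, which gives the other case of the formula used earlier. Concretely, over $\bar k$ the restriction of $\chi$ to $\ker(\pi)(k_n)=k_n$ is $x\mapsto\psi(ax)$ for some $a\neq0$ and nontrivial additive character $\psi$. Locally the sheaf then looks like an Artin--Schreier sheaf $\mcL_{\psi(a/t)}$ times a tame or unramified sheaf near $x_0$. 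The function $a/t$ has a pole of order $1$, prime to $p$, so the Swan conductor is $1$. As a cross-check, one can compute globally that $\sum_{x}\chi(i_{\delta}(x))$ behaves like a sum twisted by $\psi(f(x))$, where $f$ is a function on $C$ with a simple pole at $x_0$. Such an $f$ cannot exist alone on a genus one curve, which is why one works locally or via a double pole $x$-coordinate.

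I expect the main obstacle to be justifying rigorously the identification between $\ker(\pi)$ and $U^{(1)}/U^{(2)}$ compatibly with the sheaf--function dictionary. Concretely, one must check that the pullback of $\mcL_{\chi}$ to the punctured local field corresponds, via class field theory, to the character $\chi\circ(\text{local symbol})$. I would deduce this from the Rosenlicht--Serre reciprocity law. That law says that for a rational map $f\colon C\to G$ with modulus $\mfm$, the induced character of the idèles is given at $x_0$ by the local symbol $(f,g)_{x_0}$. On $U^{(1)}$ this symbol is the projection to $U^{(1)}/U^{(2)}\simeq\ker(\pi)$. This is also where the hypothesis $p\geq 5$ should enter. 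It ensures that the unipotent kernel is $\mathbf{G}_a$ rather than a Witt-vector-type group; this is automatic for length $2$, and is also compatible with the earlier Weierstrass-model computations in~\cite{ffk2}. It further ensures that the pole order $1$ is prime to $p$, so the Artin--Schreier computation of Swan conductors applies directly.
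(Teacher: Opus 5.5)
Your argument is correct, but it takes a genuinely different route from the paper.

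\textbf{The paper's route.} The paper works globally and explicitly. It uses a short Weierstrass model and the Corvaja--Masser--Zannier partial section $\sigma\colon C\setminus\{\infty\}\to J_{\mfm}$, which is regular at $x_0$, and writes $s=\Delta+\sigma$. Here $\Delta=s-\sigma$ lands in $\ker\pi\simeq\Gg_a$, so it is a rational function with poles only at $x_0$ and $\infty$. The explicit CMZ formulas, applied through the auxiliary $\kappa(z)=\Delta(z+w_0)-\Delta(z)-\Delta(w_0)$, show that the pole at $x_0$ is simple. Hence $s^*\mcL_{\chi}\simeq\mcL_{\psi(a\Delta)}\otimes\sigma^*\mcL_{\chi}$, with the second factor lisse at $x_0$, and the Artin--Schreier computation gives $1$. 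So your ``cross-check'' is in fact the paper's proof: a function with a simple pole at $x_0$ does exist once a second pole, at $\infty$, is allowed.

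\textbf{Your route.} You read the conductor off local class field theory. The local component $\omega_{x_0}$ of the id\`ele class character attached to $i_{\delta}^*\mcL_{\chi}$ is $\chi^{\pm1}$ composed with the Rosenlicht--Serre symbol. On units that symbol is the projection $U_{x_0}\to (U_{x_0}/U^{(2)}_{x_0})/\Gg_m\simeq U^{(1)}/U^{(2)}=\ker\pi$.

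\textbf{Closing the step you flag.} This works in the standard way. Given $u\in U_{x_0}$, take $g\in K^{\times}$ with $g/u$ in a sufficiently deep $U^{(N)}$, with $N\geq 2$. Then $\omega(g)=1$. The unramified components give $\prod_{P\neq x_0}\omega_P(g)=\chi(D_g)^{\pm1}$, where $D_g\in J_{\mfm}(k_n)$ is $\sum v_P(g)\,i_{\delta}(P)$ over the geometric points $P\neq x_0$. The reciprocity law $\sum_P(i_{\delta},g)_P=0$ gives $D_g=-(i_{\delta},g)_{x_0}$, hence $\omega_{x_0}(u)=\chi\bigl((i_{\delta},u)_{x_0}\bigr)^{\pm1}$.

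\textbf{What each buys.} Your route needs no explicit model or auxiliary section. It gives in one stroke the tame/wild dichotomy used in the proof of Theorem~\ref{th-equi}, and it extends directly to arbitrary moduli. The paper's route is more hands-on: it exhibits the wild part as the Artin--Schreier sheaf of a concrete function and needs only the elementary Swan computation.

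\textbf{One correction.} Your explanation of where $p\geq 5$ enters is vacuous. $U^{(1)}/U^{(2)}\simeq\Gg_a$ in every characteristic, and $1$ is prime to every $p$, so your argument proves the lemma without that hypothesis. In the paper, $p\geq 5$ is needed only for the short Weierstrass equation behind the CMZ formulas.
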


\begin{proof}
  We view the curve~$C$ as given by the solutions of a Weierstrass
  equation
  \[
    y^2=x^3+ax+b,
  \]
  together with the point at infinity $\infty$, which is then
  the origin of the group law on~$C$ as an elliptic curve.  Up to translation we may
  assume (possibly up to a finite extension)  that~$x_0\not=\infty$. We have an exact sequence of algebraic groups
  \[
    1\to \Gg_a \to J_{\mfm}\fleche{\pi} C\to 1. 
  \]
    
  The morphism $s\colon C\setminus \{x_0\}\to J_{\mfm}$ that maps $z$
  to the divisor $(z)-(\infty)$ is a partial section of $\pi$. There exists also a partial section
  $\sigma\colon C\setminus \{\infty\} \to J_{\mfm}$ (see, e.g., the
  paper~\cite{cmz}*{\S\,3.2} of Corvaja, Masser and Zannier; this
  reference is written with base field~$\Cc$, but the result only
  depends on the existence of a Weierstrass equation).

  We then define a morphism
  $\Delta\colon C\setminus \{x_0,\infty\}\to J_{\mfm}$ by
  \[
    \Delta(z)=  s(z)-\sigma(z).
  \]

  Since~$s$ and~$\sigma$ are both sections of~$\pi$, the image of
  $\Delta$ is contained in~$\Gg_a$. Thus, we can view~$\Delta$ as a rational function on $C$, with poles at most at $\{x_0,\infty\}$.

  We claim that $\Delta$ has a \emph{simple} pole at $x_0$.  To see
  this, fix some $w_0\in C\setminus \{x_0,\infty\}$ and consider
  the rational function $\kappa$ on~$C$ defined by
  \[
    \kappa(z)=\Delta(z+w_0)-\Delta(z)-\Delta(w_0).
  \]

  It suffices to prove that $\kappa$ has a simple pole
  at~$x_0$. For this we write $\kappa=\kappa_1-\kappa_2$, where
  \begin{gather*}
    \kappa_1(z)=s(z+w_0)-s(z)-s(w_0),\\
    \kappa_2(z)=\sigma(z+w_0)-\sigma(z)-\sigma(w_0).
  \end{gather*}

  The function $\kappa_2$ is regular at~$x_0$. As explained
  in~\cite{cmz}*{\S\,3.5,\,p.\,249}, the rational function~$\kappa_1$
  can be expressed in Weierstrass coordinates~$z=(x,y)$ as a quotient of
  linear forms in~$(x,y)$, where the denominator has a simple pole
  at~$x_0$.

  We can now complete the proof of the lemma. The Swan conductor is a
  local quantity, so 
  \[
    \swan_{x_0}(\mcL_{\chi}|s(C\setminus \{x_0\}))=
    \swan_{x_0}(\mcL_{\chi}|s(C\setminus \{x_0,\infty\})).
  \] Since $\chi$ is a character, there is an isomorphism
  \[
    \mcL_{\chi}|s(C\setminus \{x_0,\infty\})\simeq
    \mcL_{\chi}|\Delta(C\setminus
    \{x_0,\infty\})\otimes\mcL_{\chi}|\sigma(C\setminus
    \{x_0,\infty\}).
  \] Since the function~$\sigma$ is regular at $x_0$, the sheaf
  $\mcL_{\chi}|\sigma(C\setminus \{x_0,\infty\})$ is the restriction
  of the sheaf $\mcL_{\chi}|\sigma(C\setminus \set{\infty})$ which is
  lisse at~$x_0$, and hence
  \[
    \swan_{x_0}(\mcL_{\chi}|s(C\setminus
    \{x_0\}))=\swan_{x_0}(\mcL_{\chi}|\Delta(C\setminus
    \{x_0,\infty\})).
  \] By assumption, the restriction of~$\chi$ to~$\Gg_a$ is a non-trivial
  additive character, hence
  \[
  \swan_{x_0}(\mcL_{\chi}|\Delta(C\setminus \{x_0,\infty\}))
  \] is the
  Swan conductor on~$\Gg_a$ of the sheaf associated to an additive
  character of the function~$\Delta$ with a simple pole at~$x_0$, and it
  is classical that it is equal to~$1$.
\end{proof}

We give two results to prove that the group~$K^a$ is infinite. The first
applies to all curves, although it is conditional when $g=1$.

\begin{proposition}\label{pr-infinite}
  Let $(C,\mfm)$ be data defining a jacobian graph with $g\geq 1$ and
  $\dim J_{\mfm}=2$.  Under the following assumptions, the compact
  group~$K^a$ is infinite.
  \begin{enumth}
  \item If $g=2$ and the curve~$C$ is arbitrary.
  \item If $g=1$ and the set of ramified characters, in the sense
    of~\textup{\cite{ffk}*{Def.\,3.25}}, coincides with the set of
    characters which are trivial on the kernel of the projection
    $\pi\colon J_{\mfm}\to C$.
     \end{enumth}
\end{proposition}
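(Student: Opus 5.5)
The plan is to argue by contradiction. Assume $K^a$ is finite. Then the normalized Frobenius conjugacy classes attached to unramified characters are elements of a fixed finite group, and this will force strong $p$\nobreakdash-adic divisibility on the unnormalized sums. We then feed this into Fourier inversion on $J_{\mfm}(k_n)$ and obtain a congruence for $|C(k_n)|$ (or a related count) that is incompatible with the Riemann hypothesis description of $a_C(k_n)$.

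\emph{Step 1: integrality from finite monodromy.} Write $q=|k|$ and restrict to even $n$, so that $|k_n|^{1/2}=q^{n/2}$ is an integer. By the construction of $K^a$ in~\cite{ffk}*{Ch.\,4}, for every $\chi$ which is unramified for $M=i_{\delta*}\bQl[1](1/2)$, we have $U_n(\chi)=\Tr(\Theta_{n,\chi})$ for some $\Theta_{n,\chi}\in K^a\subset\SU_2(\Cc)$ (here $r=2$, as computed in the proof of Theorem~\ref{th-equi}). If $K^a$ is finite of order $N$, then the eigenvalues of $\Theta_{n,\chi}$ are $N$\nobreakdash-th roots of unity. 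Hence
\[
  S(\chi):=\sum_{x\in(C\setminus\mfm)(k_n)}\chi(i_{\delta}(x))\in q^{n/2}\,\Zz[\mu_N]
\]
for every unramified $\chi$. The main thing to check is that the relevant Frobenius really lands in $K^a$ for \emph{all} unramified $\chi$, and not only for a generic subset; I would extract this from~\cite{ffk}*{Prop.\,3.17 and Def.\,3.25}. If that turns out to be too strong, I would instead work with the subset $\mcX_n$ of Theorem~\ref{th-equi} and control the characters outside it separately.

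\emph{Step 2: the ramified characters.} When $g=2$, every $\mcL_{\chi}|C$ is lisse on the proper curve $C$. The only character for which the cohomology of $M\otimes\mcL_\chi$ is not concentrated in the middle degree is $\chi=1$, because $H^2_c\neq0$ exactly there. So I expect the trivial character to be the only ramified one, and this is the assertion I would verify against~\cite{ffk}*{Def.\,3.25}. Now choose a point $x\in J(k)$ with $x\notin S_n$ for all even $n$; for instance, take any point not on the curve $i_\delta(C)$ that is defined over $k$, after passing to a finite extension of $k$ if necessary. Fourier inversion gives
\[
  0=|J(k_n)|\mathbf 1_{S_n}(x)=\sum_{\chi}\chi(-x)S(\chi)=|C(k_n)|+\sum_{\chi\neq1}\chi(-x)S(\chi).
\]
By Step~1, each term of the last sum lies in $q^{n/2}\bar{\Zz}$. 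Therefore $|C(k_n)|=q^n+1-a_C(k_n)\equiv0\pmod{q^{n/2}}$ in $\bar{\Zz}\cap\Qq=\Zz$. This means $1-a_C(k_n)\in q^{n/2}\Zz$ for all large even $n$.

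\emph{Step 3: contradiction with the Riemann hypothesis.} Write $a_C(k_n)=\sum_{i}\alpha_i^n$ with $|\alpha_i|=q^{1/2}$, and put $\beta_i=\alpha_i/q^{1/2}$. Step~2 says that
\[
  q^{-n/2}-\sum_i\beta_i^n\in\Zz
\]
for all large even $n$. The quantity $\sum_i\beta_i^n$ then lies within $q^{-n/2}\to0$ of an integer, with the $\beta_i$ on the unit circle and closed under conjugation. A Kronecker/Pisot-type argument shows that the $\beta_i$ must be roots of unity. Once that is known, $\sum_i\beta_i^n$ is a rational algebraic integer, hence an integer, and so $q^{-n/2}\in\Zz$, which is absurd. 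This proves part~(1).

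\emph{Step 4: the case $g=1$.} Here I would run the same inversion, using hypothesis~(2) to identify the ramified characters with the characters of $C(k_n)$ pulled back by $\pi$. Their total contribution to $\sum_\chi\chi(-x)S(\chi)$ is explicit: it equals $|\ker\pi(k_n)|$ times the number of $s\in S_n$ with $\pi(s)=\pi(x)$. The natural choice is $x\in S_n$, which yields a congruence of the form $a_C(k_n)\equiv c\pmod{q^{n/2}}$ with an explicit constant $c$ depending on whether $\ker\pi$ is $\Gg_m$ or $\Gg_a$. This step is the main obstacle. With $x\in S_n$ the small correction term that drove Step~3 can vanish; for example, in the $\Gg_m$ case one gets only $a_C(k_n)\equiv0$. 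A supersingular curve would then not be ruled out by this congruence alone. I would try to get an extra term in one of two ways. The first is to choose $x$ with $\pi(x)$ in the support of $\mfm$. The second is to combine two values of $x$ differing by a point of $\ker\pi$ and use additive or multiplicative characters of the kernel, so as to reintroduce a nonzero constant of size $o(q^{n/2})$. The contradiction would then follow exactly as in Step~3.
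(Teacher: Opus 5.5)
\textbf{Part (2), the case $g=1$.} Your strategy is the one the paper uses for part~(2): finite $K^a$ gives integrality of $U_n(\chi)$ at unramified $\chi$, then Fourier inversion, then a congruence for $|C(k_n)|$ that contradicts the Riemann hypothesis. But you leave part~(2) unfinished, and the obstacle you describe comes from a miscount. For a ramified $\chi=\chi'\circ\pi$, orthogonality takes place in $C(k_n)$:
\[
\sum_{\chi'\in\widehat{C(k_n)}}\overline{\chi'(\pi(x))}\sum_{s\in S_n}\chi'(\pi(s))=|C(k_n)|\cdot\bigl|\{s\in S_n:\pi(s)=\pi(x)\}\bigr|.
\]
The factor is $|C(k_n)|$, not $|\ker\pi(k_n)|$.

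The missing step is the paper's choice of test point: take $x=s+t$ with $s\in S_n$ and $0\neq t\in\ker\pi(k_n)$. Since $\pi$ is injective on $S_n$, we have $x\notin S_n$, while $\pi(x)$ has exactly one preimage in $S_n$. Inversion therefore gives $0=(\text{unramified part})+|C(k_n)|$. Hence $|C(k_n)|\in|k_n|^{1/2}\bar{\Zz}$, i.e.\ $a_C(k_n)\equiv 1\pmod{|k_n|^{1/2}}$ for even $n$. This is exactly your Step~2 congruence, and your Step~3 with the single pair $\alpha,\bar\alpha$ then gives the contradiction, including for supersingular $C$.

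With the correct count, even your choice $x\in S_n$ works when $p$ is odd. It yields $|C(k_n)|(|\ker\pi(k_n)|-1)\in|k_n|^{1/2}\bar{\Zz}$, and $|\ker\pi(k_n)|-1\in\{|k_n|-2,|k_n|-1\}$ is prime to~$p$. By contrast, your option of taking $\pi(x)$ in the support of~$\mfm$ gives nothing: both $\mathbf{1}_{S_n}(x)$ and the ramified contribution vanish.

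\textbf{Part (1), the case $g=2$.} The paper does not argue directly here; it cites \cite{ffk}*{Th.\,11.1}. Your direct argument hinges on the claim that $\chi=1$ is the only ramified character, and the cohomological computation you indicate does not give this. As the paper points out right after the proof, the space $H^0_c(J_{\bar k},M\otimes\mcL_\chi)$ being two-dimensional and pure of weight~$0$ is only a \emph{necessary} condition for $\chi$ to be unramified in the sense of \cite{ffk}*{Def.\,3.25}. Sufficiency is not known; in genus~$1$ it is precisely what hypothesis~(2) assumes.

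If some $\chi\neq 1$ were ramified, $S(\chi)$ would only be a sum of Weil numbers, with no reason to lie in $|k_n|^{1/2}\bar{\Zz}$, and the congruence of your Step~2 would not follow. So as written, Step~2 proves (1) only under an unverified hypothesis of the same type as the one assumed in (2).

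\textbf{Step 3.} This step is sound. The closure of $\{(\beta_i^n)_i\}_{n\text{ even}}$ is a compact group on which $\sum_i x_i$ is integer-valued. It is therefore identically equal to its value $2g$ on the identity component, which forces that component to be trivial and the $\beta_i$ to be roots of unity.
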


\begin{proof}
  (1) The case $g=2$ is treated in~\cite{ffk}*{Th.\,11.1} (this was
  known to N. Katz around~2012).

  (2) We proceed by contradiction. The assumption that~$K^a$ is finite
  implies that the object 
  \[
  M=i_{\delta*}\bQl[1](1/2)
  \] is generically
  unramified by~\cite{ffk}*{Cor.\,3.39}.  We assume then that the set of
  ramified characters coincides with the set of characters that are
  trivial on the kernel of the projection~$J_{\mfm}\to C$. We denote
  by~$\mcX$ the set of unramified characters.

  Let $n\geq 1$ and let $\chi$ be a character of $J_{\mfm}(k_n)$. From
  the basic theory (see~\cite{ffk}*{\S\,3.9}), if $\chi$ is unramified,
  then the sum
  \[
    U_n(\chi)=\frac{1}{|k_n|^{1/2}}\sum_{x\in (C\setminus
      \mfm)(k_n)}\chi(i_{\delta}(x))
  \]
  is the trace of a matrix belonging to~$K^a$.  If~$K^a$ is finite, it
  follows that this is a sum of roots of unity, and in particular it is
  an algebraic integer (in some cyclotomic field).
 
  Let $y_0\in J_{\mfm}(k_n)$. Using orthogonality of characters, we
  compute
  \[
    \sum_{\chi\in\mcX(k_n)}\overline{\chi(y_0)}U_n(\chi),
  \]
  which is also an algebraic integer. We find
  \begin{align*}
    \sum_{\chi\in \mcX(k_n)}\overline{\chi(y_0)}U_n(\chi) &=
    \frac{1}{|k_n|^{1/2}}\sum_{x\in (C\setminus\mfm)(k_n)}
    \sum_{\chi\in\mcX(k_n)}\overline{\chi(y_0)}\chi(i_{\delta}(x))
    \\
    &= \frac{1}{|k_n|^{1/2}}\Bigl( |(C\setminus\mfm)(k_n)|\delta(y_0)-
    \sum_{\chi\notin\mcX(k_n)}\overline{\chi(y_0)}\chi(i_{\delta}(x)) \Bigr), 
  \end{align*}
  where $\delta(y_0)$ is $1$ if~$y_0$ is in $i_{\delta}(C)(k_n)$ and~$0$
  otherwise. According to our assumption, the condition
  $\chi\notin\mcX(k_n)$ is equivalent to saying that $\chi$ is of the
  form $\chi'\circ \pi$ for some (unique) character~$\chi'$ of the group
  $C(k_n)$. So
  \[
    \sum_{\chi\notin\mcX(k_n)}\overline{\chi(y_0)}\chi(i_{\delta}(x))=
    \sum_{\chi'\in \widehat{C}(k_n)}\overline{\chi'(\pi(y_0))}\chi'(x).
  \]

  This is equal to~$0$ unless $x=\pi(y_0)$, in which case it is
  $|C(k_n)|$. There is a single~$x$ with the second property, so we
  conclude that
  \[
    \frac{1}{|k_n|^{1/2}}\Bigl( |(C\setminus
    \mfm)(k_n)|\delta(y_0)-|C(k_n)| \Bigr)
  \]
  is an algebraic integer.

  Assume that $y_0$ is chosen so that $\delta(y_0)=0$ (such a $y_0$
  exists at least if $n$ is large enough). Write
  $|C(k_n)|=|k_n|+1-a_C(k_n)$ for some integer $a_C(k_n)$.
  Then
  \[
    \sum_{\chi\notin\mcX(k_n)}\overline{\chi(y_0)}\chi(i_{\delta}(x))=
    -\frac{|C(k_n)|}{|k_n|^{1/2}}= -\Bigl(|k_n|^{1/2}
    -\frac{a_C(k_n)-1}{|k_n|^{1/2}}\Bigr)
  \]
  is an algebraic integer, and therefore
  \[
    \beta_n=\frac{a_C(k_n)-1}{|k_n|^{1/2}}
  \]
  is an algebraic integer. By the Hasse bound, we can write further
  $a_C(k_n)=|k_n|^{1/2}(\alpha_n+\alpha_n^{-1})$ with $|\alpha_n|=1$, so
  that
  \[
    \beta_n=\alpha_n+\alpha_n^{-1}-\frac{1}{|k_n|^{1/2}}.
  \]

  All $\beta_n$ are algebraic integers. Now observe that if we replace
  $n$ by $3n$, then the relation
  \[
    \alpha_{3n}+\alpha_{3n}^{-1}=\alpha_n^3+\alpha_n^{-3}
    =(\alpha_n+\alpha_n^{-1})^3-3(\alpha_n+\alpha_n^{-1})
  \]
  holds, and hence
  \[
    \beta_{3n}=\alpha_{3n}+\alpha_{3n}^{-1}-\frac{1}{|k_n|^{3/2}}
  \]
  is congruent modulo algebraic integers to
  \[
    \frac{1}{|k_n|^{3/2}}-\frac{3}{|k_n|^{1/2}}-\frac{1}{|k_n|^{3/2}}
    =-\frac{3}{|k_n|^{1/2}},
  \]
  which is not an algebraic integer (even if~$k$ has
  characteristic~$3$, taking~$n$ large enough). Thus, we have a
  contradiction.
\end{proof}

The assumption concerning ramified characters in part (2) of the
proposition seems quite reasonable. Indeed, a slight elaboration of the
argument used in the computation of $r$ in the beginning of the proof of
Theorem~\ref{th-jacobian} shows that it is \emph{exactly} for the
characters~\hbox{$\chi\in\widehat{J}_{\mfm}(k_n)$} that are non-trivial
on the kernel of $J_{\mfm}\to C$ that the space
$H^0_c(J_{\mfm,\bar{k}},M\otimes \mcL_{\chi})$ is
two\nobreakdash-dimensional and that the action of the Frobenius
automorphism is unitary (i.e., pure of weight~$0$), which are necessary
conditions for~$\chi$ to be unramified.  In fact, it should follow from
the ongoing PhD thesis of Beat Zurbuchen that this condition is also
sufficient, thus proving the result unconditionally.

Independently of these developments, we can prove elementarily that
$K^a$ is infinite in many cases when $g=1$ and~$\mfm$ consists of a
double point. This depends on the following lemma.

\begin{lemma}\label{lm-nonzero}
  Assume that $C$ has genus~$1$ and $\mfm$ is a double point. If $|k|$
  is coprime to $|C(k)|$, then the sum
  \[
    T(\chi)=\sum_{x\in (C\setminus \mfm)(k)}\chi(i_{\delta}(x))
  \]
  is non-zero for all characters~$\chi$ of $J_{\mfm}(k)$.
\end{lemma}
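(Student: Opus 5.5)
The plan is to exploit the fact that $|J_{\mfm}(k)|=|k|\cdot|C(k)|$ by the exact sequence~(\ref{eq-exact-2}). Write $p$ for the characteristic of~$k$; since $p\nmid|C(k)|$ by assumption, the sequence $0\to k\to J_{\mfm}(k)\to C(k)\to 0$ splits canonically. The subgroup $k$ is the $p$-primary part of $J_{\mfm}(k)$, and the prime-to-$p$ part $P'$ maps isomorphically onto $C(k)$ under~$\pi$. Accordingly every character factors as $\chi=\psi\cdot\chi'$. Here $\psi$ is a character of $p$-power order, essentially an additive character of~$k$ extended through the projection $J_{\mfm}(k)\to k$, and $\chi'$ has order prime to~$p$, factoring through $J_{\mfm}(k)\to P'\simeq C(k)$. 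Let $\rho\colon J_{\mfm}(k)\to C(k)$ be the projection to the prime-to-$p$ part followed by this isomorphism; then $\rho=\pi$, so $\chi'=\chi''\circ\pi$ for some $\chi''\in\widehat{C}(k)$.

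The key computation is a reduction modulo a prime $\mathfrak{P}$ above~$p$ in the cyclotomic ring $\Zz[\mu_{p|C(k)|}]$. Since $\psi$ takes values in $p$-th power roots of unity, $\psi\equiv 1\pmod{\mathfrak{P}}$, and therefore
\[
  T(\chi)\equiv\sum_{x\in(C\setminus\mfm)(k)}\chi''(\pi(i_{\delta}(x)))\pmod{\mathfrak{P}}.
\]
The map $\pi\circ i_{\delta}\colon C\to C$ is the classical Abel--Jacobi map of the elliptic curve, i.e.\ a translation, hence a bijection on $C(k)$. Because $\mfm=2(x_0)$ with $x_0$ rational, the sum runs over $C(k)$ minus one point $y_0$. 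So the right-hand side equals $|C(k)|\,\mathbf 1_{\chi''=1}-\chi''(y_0)$.
\begin{enumerate}
\item If $\chi''\neq 1$, this is $-\chi''(y_0)$, a root of unity, hence a unit modulo~$\mathfrak{P}$. Thus $T(\chi)\not\equiv 0$, and in particular $T(\chi)\neq 0$.
\item If $\chi$ is trivial, then $T(\chi)=|C(k)|-1\neq 0$ for trivial reasons.
\end{enumerate}

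The remaining case, and the main obstacle, is $\chi''=1$ with $\psi\neq 1$, i.e.\ $\chi$ non-trivial on the kernel~$k$ but trivial on the prime-to-$p$ part. Then the congruence only gives $T(\chi)\equiv|C(k)|-1\equiv -a_C(k)\pmod{\mathfrak{P}}$, which is useless when $p\mid a_C(k)$. Here I would write $T(\chi)=\sum_{t}n_t\,\psi(t)$, where $n_t$ counts the $x$ whose $k$-component is~$t$. A vanishing sum of $p$-power roots of unity forces the counts $n_t$ to be constant along cosets of $\ker\psi$, which requires $p\mid |C(k)|-1$ and a perfectly balanced distribution. I would try to rule this out in two ways, starting with the first:
\begin{enumerate}
\item Use the symmetric Sidon property of $S(k)$ to show that this $k$-component cannot be so balanced.
\item Failing that, use the explicit description via the function $\Delta$ of Lemma~\ref{lm-swan}: the $k$-component is essentially $\Delta$ up to a homomorphism, so $T(\chi)$ becomes a one-variable additive character sum of a rational function with a simple pole, whose non-vanishing can be checked from its algebraic structure.
\end{enumerate}
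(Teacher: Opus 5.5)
Your argument for the characters with $\chi''\neq 1$ is correct. It is the paper's argument in a slightly slicker form. The paper groups the terms of $T(\chi)$ by the value $\xi$ of the $p$-part, and uses linear disjointness of $\Qq(\zeta_p)$ and $\Qq(\zeta_{|C(k)|})$ to show that $T(\chi)=0$ forces all fibre sums to equal a common cyclotomic integer $\beta$. It then gets $p\beta=-\chi_2(\pi(i_\delta(x_0)))$, a unit, which is absurd. Your reduction modulo $\mathfrak{P}\mid p$ reaches the same contradiction in one step. Your congruence $T(\chi)\equiv -a_C(k)$ also settles $\chi''=1$ whenever $p\nmid a_C(k)$.

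The gap is the case you flag yourself: $\chi''=1$, $\psi\neq 1$, with $p\mid a_C(k)$, i.e.\ $C$ supersingular. The proposal proves nothing there, and neither suggested route closes it.
\begin{itemize}
\item Route (1): the symmetric Sidon property only controls additive coincidences $a+b=c+d$ in $J_{\mfm}(k)$, i.e.\ the fourth moment $\sum_{\chi}|T(\chi)|^4$. It says nothing about how the values of a single order-$p$ character are spread over $S$.
\item Route (2): the algebraic section $\sigma$ of Lemma~\ref{lm-swan} is not a homomorphism, so $\chi\circ s$ is not $\chi\circ\Delta$ times a character. You could instead use class field theory to write $\chi(i_\delta(x))$ as a constant times $\psi_0(\mathrm{Tr}_{k/\Ff_p} f(x))$, with $f$ an Artin--Schreier function of Swan conductor $1$ at $x_0$. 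But then $T(\chi)=-(\beta_1+\beta_2)$, where $\beta_1,\beta_2$ are the Frobenius eigenvalues on a $2$-dimensional $H^1_c$, both of absolute value $|k|^{1/2}$. Nothing in this structure rules out $\beta_2=-\beta_1$.
\end{itemize}

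The paper's proof has exactly the same hole. In its last display, the equality $\sum_{x}\chi_2(i_\delta(x))=-\chi_2(i_\delta(x_0))$ uses orthogonality of $\chi_2$ on $C(k)$, which requires $\chi_2\neq 1$. For $\chi_2=1$ it reads $p\beta=|C(k)|-1$, which is no contradiction when $p\mid a_C(k)$.

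The statement can in fact fail in this case, at least degenerately. For $y^2=x^3-x-1$ over $\Ff_3$ one has $C(\Ff_3)=\{\infty\}$. With $\mfm=2(\infty)$ the sum is empty, so $T(\chi)=0$ for every $\chi$, including the trivial one. So your claim that the trivial character is handled \emph{for trivial reasons} also needs $|C(k)|\geq 2$.

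The sound repair is to restrict the lemma to characters that are non-trivial on the prime-to-$p$ part of $J_{\mfm}(k)$, and to all $\chi$ when $p\nmid a_C(k)$. This is exactly what your congruence proves. The weaker form suffices for Proposition~\ref{pr-infinite2}: the excluded characters number $|k_n|-1$ among $|k_n||C(k_n)|$, a proportion tending to $0$.
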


\begin{proof}
  We denote by~$p$ the characteristic of~$k$. We have the exact sequence
  \[
    1\to k\to J_{\mfm}(k)\fleche{\pi} C(k)\to 1.
  \]

  As before, we recall that the size of $C(k)$ is given by the formula
  \[
    |C(k)|=|k|+1-a_C(k),
  \]
  where the integer $a_C(k)$ is of the form
  \[
    a_C(k)=|k|^{1/2}(\alpha+\alpha^{-1})
  \]
  for some algebraic number~$\alpha$ with $|\alpha|=1$.
  
  Since we assume that the order of $C(k)$ is coprime to~$|k|$, the
  above exact sequence splits, and there exists a (section) homomorphism
  $\sigma\colon C(k)\to J_{\mfm}(k)$, defining an isomorphism
  \[
    J_{\mfm}(k)\to k\times C(k)
  \]
  by $x\mapsto (x-\sigma(\pi(x)),\pi(x))$.

  Let~$\chi$ be a character of $J_{\mfm}(k)$, and denote by $\chi_1$
  and~$\chi_2$ the characters of $k$ and $C(k)$ corresponding to $\chi$
  through this isomorphism; we also view these as characters of
  $J_{\mfm}(k)$ (e.g., we write $\chi_2(x)$ for $\chi_2(\pi(x))$ when
  $x\in J_{\mfm}(k)$).

  The values of~$\chi_1$ are $p$-th roots of unity. We write
  \[
    T(\chi)=\sum_{\xi^p=1}\xi\sum_{\substack{x\in (C\setminus
        \mfm)(k)\\\chi_1(i_{\delta}(x))=\xi}}\chi_2(i_{\delta}(x)),
  \]
  where~$\xi$ runs over $p$-th roots of unity in~$\Cc$. For each~$\xi$,
  the inner sum is a cyclotomic integer and belongs to the field
  generated by values of~$\chi_2$, which are $|C(k)|$-th roots of unity.
  Since~$p\nmid |C(k)|$, this field is linearly disjoint from
  $\Qq(e^{2i\pi/p})$, and it follows that $T(\chi)=0$ if and only if the
  cyclotomic integer
  \[
    \sum_{\substack{x\in (C\setminus
        \mfm)(k)\\\chi_1(i_{\delta}(x))=\xi}}\chi_2(i_{\delta}(x)),
  \]
  is \emph{independent} of~$\xi$. If we call this cyclotomic integer
  $\beta$, then we deduce that
  \[
    p\beta=\sum_{\xi^p=1} \sum_{\substack{x\in (C\setminus
        \mfm)(k)\\\chi_1(i_{\delta}(x))=\xi}}\chi_2(i_{\delta}(x))=
    \sum_{x\in (C\setminus
      \mfm)(k)}\chi_2(i_{\delta}(x))=-\chi_2(i_{\delta}(x_0)),
  \]
  where $\mfm=2(x_0)$ (since $i_{\delta}$ is a set-theoretic section
  of~$\pi$ on $C\setminus \mfm$). But this is absurd since the
  right-hand side is a unit whereas the left-hand side is not, and
  therefore $T(\chi)\not=0$.
\end{proof}

\begin{proposition}\label{pr-infinite2}
  Let~$p$ be the characteristic of~$k$.  Suppose that $g=1$ and that
  $\mfm$ is a double point. Write $|C(k)|=|k|+1-a_C(k)$ for some
  integer~$a_C(k)$. If $a_C(k)$ is zero modulo~$p$ or if the
  multiplicative order~$\ord_p(a_C(k))$ of $a_C(k)$ modulo~$p$ is
  $\geq 5$, then the group $K^a$ is infinite. In particular, if~$C$ is
  supersingular, so that $p\mid a_C(k)$, then $K^a$ is infinite.
\end{proposition}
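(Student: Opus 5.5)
The plan is to argue by contradiction, assuming $K^a$ is finite. By Theorem~\ref{it-equid3}, $K^a$ is then one of the binary tetrahedral, octahedral or icosahedral groups $\SL_2(\Ff_3)$, $\GL_2(\Ff_3)$, $\SL_2(\Ff_5)$ inside $\SU_2(\Cc)$. I will play off two facts against each other. On one side, each of these groups has many elements of trace~$0$. On the other side, Lemma~\ref{lm-nonzero} forbids the sums $U_n(\chi)$ from vanishing for most~$n$.

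\emph{Step 1: trace-zero elements in $K^a$.} In these three groups the trace-zero elements are exactly the elements of order~$4$. Counting them gives:
\begin{itemize}
\item $6$ out of $24$ in $\SL_2(\Ff_3)$;
\item $18$ out of $48$ in $\GL_2(\Ff_3)$ (the $6$ elements of order~$4$ in $Q_8$, together with $12$ more);
\item $30$ out of $120$ in $\SL_2(\Ff_5)$.
\end{itemize}
Hence the Haar measure $\nu$ of $\{x\in K^a: \Tr(x)=0\}$ satisfies $\nu\geq 1/4$ in all three cases.

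\emph{Step 2: nonvanishing for most $n$.} Write $|C(k_n)|=|k_n|+1-a_C(k_n)$ and $a_C(k)=\alpha+\beta$ with $\alpha\beta=|k|$. Since $\alpha\beta\equiv 0\mods{p}$, we get $a_C(k_n)=\alpha^n+\beta^n\equiv(\alpha+\beta)^n=a_C(k)^n$ modulo~$p$ (in the ring of integers of $\Qq(\alpha)$, hence in~$\Zz$). Therefore $|C(k_n)|\equiv 1-a_C(k)^n\mods{p}$, so $|k_n|$ is coprime to $|C(k_n)|$ unless $a_C(k)^n\equiv 1\mods p$.
\begin{itemize}
\item If $p\mid a_C(k)$, this coprimality holds for every~$n$.
\item Otherwise it fails only when $\ord_p(a_C(k))\mid n$, which is a set of $n$ of density $1/\ord_p(a_C(k))\leq 1/5$.
\end{itemize}
Applying Lemma~\ref{lm-nonzero} over $k_n$ in place of $k$ then gives $U_n(\chi)\neq 0$ for every $\chi\in\widehat{J}_{\mfm}(k_n)$, for all $n$ outside a set of density at most~$1/5$.

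\emph{Step 3: equidistribution.} Let $\eta>0$ be smaller than the minimal nonzero value of $|\Tr(x)|$ for $x\in K^a$; this is positive since $K^a$ is finite. Take $f$ continuous with $0\leq f\leq 1$, $f(0)=1$ and support in $\{|z|<\eta\}$. Theorem~\ref{it-equid1} gives
\[
\lim_{N\to+\infty}\frac1N\sum_{n\leq N}\frac{1}{|J_{\mfm}(k_n)|}\sum_{\chi}f(U_n(\chi))=\nu\geq\tfrac14 .
\]
Now I bound the left-hand side from above.
\begin{itemize}
\item For unramified $\chi$ (in the sense of \cite{ffk}), $U_n(\chi)$ is the trace of an element of $K^a$. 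So $f(U_n(\chi))\neq 0$ forces $U_n(\chi)=0$, which Step~2 excludes for $n$ outside the exceptional set.
\item $K^a$ finite implies the object is generically unramified (\cite{ffk}*{Cor.\,3.39}), so the ramified characters form a proportion $o(1)$ of $\widehat{J}_{\mfm}(k_n)$ as $n\to+\infty$.
\item For $n$ in the exceptional set, the inner average is trivially at most $1$.
\end{itemize}
Together these bound the Cesàro average by $1/\ord_p(a_C(k))\leq 1/5$ (or by $0$ if $p\mid a_C(k)$). This contradicts $\nu\geq 1/4$, so $K^a$ is infinite. The supersingular case is covered because then $p\mid a_C(k)$.

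The main obstacle I expect is Step~3. Exactness of "$U_n(\chi)$ is the trace of an element of $K^a$" is only available for unramified characters, so I must justify that the ramified ones have density zero and cannot concentrate near~$0$. I will handle this through the generic unramifiedness result just cited. The other delicate point is checking the counts in Step~1 carefully, since the margin between $1/4$ and $1/5$ is exactly what makes the hypothesis $\ord_p(a_C(k))\geq 5$ work.
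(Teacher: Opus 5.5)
Your proposal is correct and follows the paper's proof essentially step for step. Both argue by contradiction from finiteness of $K^a$, use a trace-zero density of at least $1/4$ in the three finite groups, combine Lemma~\ref{lm-nonzero} with the congruence $|C(k_n)|\equiv 1-a_C(k)^n\mods{p}$, and conclude with the averaged equidistribution of Theorem~\ref{it-equid1}. The only difference is that you make the equidistribution step rigorous with a bump function and explicitly set aside the density-zero set of ramified characters, whereas the paper simply asserts that ``the proportion of characters with vanishing sum is $\gamma$.''
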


\begin{proof}
  We proceed by contradiction.  As before, the assumption that~$K^a$ is
  finite implies that the object $M=i_{\delta,*}\bQl[1](1/2)$ is
  generically unramified by~\cite{ffk}*{Cor.\,3.39}.

  For any integer~$n\geq 1$ and any unramified character~$\chi$ of
  $J_{\mfm}(k_n)$, the sum $U_n(\chi)$ coincides with the trace of an
  element of~$K^a$. Let~$\gamma$ denote the density in~$K^a$ of the
  elements of trace~$0$. By inspection of the three possible subgroups
  of $\SU_2(\Cc)$ with fourth moment equal to~$2$ (see \cref{it-equid3} and Remark~\ref{rm-data} for details), the
  density~$\gamma$ is at least~$1/4$.
 
  By equidistribution, the proportion of characters~$\chi$ such that the
  sum
  \[
    T_n(\chi)=|k_n|^{1/2}U_n(\chi)= \sum_{x\in (C\setminus
      \mfm)(k_n)}\chi(i_{\delta}(x))
  \]
 vanishes is equal to~$\gamma$ (the proportion is taken in the
  average limit sense of \cref{it-equid1}). We now show that this does not happen under the assumptions of the
  proposition.

  Fix again $n\geq 1$. By Lemma~\ref{lm-nonzero}, applied to $C$
  over~$k_n$ instead of~$k$, none of the sums~$T_n(\chi)$ vanish if
  $|C(k_n)|$ is coprime to~$|k_n|$, or equivalently if
  $p\nmid |C(k_n)|$.
  
  Now we claim that $p\mid |C(k_n)|$ if and only if
  $a_C^n\equiv 1\mods{p}$, where $a_C=a_C(k)$ is the trace of Frobenius
  over the base field~$k$. Indeed, we have 
  \[
    |C(k_n)|\equiv 1-a_C(k_n)\mods{p}. 
  \]
  Writing $a_C=\alpha+\beta$, where
  $\alpha\beta=|k|$, we have $a_C(k_n)=\alpha^n+\beta^n$, and therefore
  \[
    a_C^n=(\alpha+\beta)^n=\alpha^n+\alpha\beta\sum_{j=1}^{n-1}
    \binom{n}{j}\alpha^{j-1}\beta^{n-j-1}+\beta^n\equiv
    \alpha^n+\beta^n\mods{p},
  \]
  which implies that $|C(k_n)|\equiv 1-a_C^n\mods{p}$, proving the
  claim.

  Thus $|C(k_n)|$ is coprime to~$p$ unless $n$ is a multiple
  of~$\ord_p(a_C)$ modulo~$p$. In particular, the
  density~$\gamma$ of characters~$\chi$ with $U_n(\chi)=0$ is
  $\leq 1/{\ord_p(a_C)}$, which contradicts the lower-bound
  $\gamma\geq 1/4$ whenever $\ord_p(a_C)\geq 5$.
\end{proof}

\begin{remark}\label{rm-data}
  Here is the data for the three possible finite groups from
  Theorem~\ref{th-equi}. The results were checked with
  \textsc{Magma}~\cite{magma}, but the groups involved are small and
  well-understood enough that everything can be done by hand using the
  list of conjugacy classes and character tables of the groups involved
  (see, e.g.,~\cite{repr}*{\S\,4.6.4}).

  \begin{enumerate}
  \item $K^a=\SL_2(\Ff_3)$: this is a group of order $24$; it has three
    non-isomorphic two-dimensio\-nal irreducible representations, which
    all have fourth moment equal to~$2$. For each of these, $\Tr(g)=0$
    if and only if~$g$ is conjugate to
    \[
      \begin{pmatrix}
        0 & 1\\-1& 0
      \end{pmatrix}. 
    \]
    This conjugacy class has size~$6=\lvert\SL_2(\Ff_3)\rvert/4$, so the
    density of zero values is $1/4$.
    
  \item $K^a=\GL_2(\Ff_3)$: this is a group of order $48$; it has three
    non-isomorphic two-dimensio\-nal irreducible representations, but only
    two of them have fourth moment equal to~$2$. For both of these,
    $\Tr(g)=0$ if and only if~$g$ is conjugate to one of
    \[
      \begin{pmatrix}
        -1 & 0\\0& 1
      \end{pmatrix},\quad\quad
      \begin{pmatrix}
        -1 & 1\\1& 1
      \end{pmatrix}. 
    \]
    The conjugacy classes have size respectively $12$ and~$6$, so
    that the density of zeros is equal to $18/48>1/4$.

  \item $K^a=\SL_2(\Ff_5)$: this is a group of order $120$; it has two
    non-isomorphic two-dimensio\-nal irreducible representations, both of
    which have fourth moment equal to~$2$. For each of them, we have
    $\Tr(g)=0$ if and only if~$g$ is conjugate to
    \[
      \begin{pmatrix}
        -2 & 0\\0& 2
      \end{pmatrix},
    \]
    whose conjugacy class has size $30=\lvert\SL_2(\Ff_5)\rvert/4$, with a density
    of zeros~$1/4$.
  \end{enumerate}
\end{remark}

\begin{remark}
  (1) The condition $\ord_p(a_C(k))\geq 5$ can be checked efficiently
  using polynomial time algorithms to compute $a_C(k)$. It is also a
  fairly generic condition. Indeed, as already mentioned, it is known by
  work of Deuring and Honda--Tate theory that the integer~$a_C(k)$
  parameterizes the isogeny classes of elliptic curves over~$k$; the
  Hasse bound $|a_C(k)|\leq 2\sqrt{|k|}$ implies that the number of such
  isogeny classes is of order $\sqrt{|k|}$. Since there are only a
  bounded number of elements of order $\leq 4$ modulo~$p$, it follows by
  periodicity that there are only about
  \[
    \frac{4\sqrt{|k|}}{p}+O(1)
  \]
  isogeny classes with $\ord_p(a_C(k))\leq 4$. In particular, if
  $k=\Ff_p$ for some prime~$p$, at most finitely many isogeny classes
  are excluded from about $\sqrt{p}$. The cardinality of each isogeny
  class is a class number of a quadratic order, so that each class
  contains at most about $\sqrt{|k|}$ isomorphism classes. Hence, there
  are only about $4|k|/p+O(\sqrt{|k|})$ isomorphism classes of elliptic
  curves over~$k$ such that $\ord_p(a_C(k))\leq 4$.
  
  (2) If there exists an integer $a$ coprime to~$p$ such that
  $\ord_p(a)\leq 4$, then either $a$ is congruent to $1$ or $-1$
  modulo~$p$ (with order~$1$ or $2$ modulo~$p$), or we must have
  \hbox{$p\equiv 1\mods{3}$} or~\hbox{$p\equiv 1\mods{4}$}. In particular, the only possibility is that $a_C(k)^2\equiv 1\mods{p}$ for
\hbox{$p\equiv 11$} $\mods{12}$; when the base field is~$\Ff_p$, the
  Hasse bound means that, in that case, only \hbox{$a_C(\Ff_p)=1$} or
  $a_C(\Ff_p)=-1$ fail to satisfy $\ord_p(a_C(\Ff_p))\geq 5$.

  The condition $a_C(\Ff_p)\equiv 1\mods{p}$ appears in the arithmetic
  of elliptic curves, in the work of Mazur~\cite{mazur}, who called
  these \emph{anomalous primes}. In particular, it was proved by Mazur
  (see~\cite{mazur}*{proof\,of\,Lemma\,8.18}) that if $C$ is an
  elliptic curve over $\Qq$ with a \emph{non-trivial} rational torsion
  point, then the set of such primes is finite, and in fact is
  contained in the set of primes that may occur as the order of a
  torsion point on an elliptic curve over~$\Qq$, i.e., these are
  $\leq 7$ by another theorem of Mazur~\cite{mazur2}*{Th.\,7'}. Thus,
  in this case, we can apply the proposition to prove that~$K^a$ is
  infinite for any prime $p\equiv 11\mods{12}$.

  (3) We proved elementarily that if~$C$ is ordinary, then the degree of
  the extension of~$k$ generated by the coordinates of $p$-torsion
  points of~$C$ is the same as the multiplicative order of~$a_C(k)$
  modulo~$p$. As pointed out by D. Loeffler, this can also be understood
  more geometrically as follows: the $p$-adic Tate module of~$C$ is, in
  this case, isomorphic to~$\Zz_p$, with the Frobenius action given by
  multiplication by the unique root $\gamma$ of $X^2-a_C(k)X+|k|$, which
  is a $p$-adic unit.  Looking at the $p$-torsion points means looking
  at $\Zz/p\Zz$ with multiplication by~$\gamma$ modulo~$p$, and since
  \[
    X^2-a_C(k)X+|k|\equiv X(X-a_C(k))\mods{p},
  \]
  we have~$\gamma\equiv a_C(k)\mods{p}$.
\end{remark}

Using rather deep results, we can prove furthermore that
Proposition~\ref{pr-infinite2} applies to almost all reductions
modulo primes of a fixed elliptic curve over $\Qq$.
  
\begin{proposition}\label{pr-order}
  Let~$E$ be an elliptic curve over $\Qq$. As $x\to+\infty$, the following holds: 
  \[
    |\{p\leq x\,\mid\, \ord_p(a_E(\Ff_p))\geq 5\}|\sim \frac{x}{\log x}. 
  \]
\end{proposition}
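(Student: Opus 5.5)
The plan is to show that the set of \emph{bad} primes $p\leq x$, meaning those with $\ord_p(a_E(\Ff_p))\leq 4$, has size $o(x/\log x)$. The prime number theorem then gives the asymptotic. Write $a_p=a_E(\Ff_p)$ and discard the finitely many primes of bad reduction and all $p\leq 17$. Two conventions need fixing. First, we follow Proposition~\ref{pr-infinite2} and regard $a_p\equiv 0\mods{p}$ as satisfying the condition; for $p\geq 5$ this happens exactly when $a_p=0$, by the Hasse bound. Without this convention the statement would fail for CM curves, since half of all primes are then supersingular. Second, we use the Hasse bound $|a_p|\leq 2\sqrt p$ throughout.

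The first step turns the congruence condition into an exact equation. If $\ord_p(a_p)=m\leq 4$, then $p$ divides $a_p-1$, $a_p+1$, $a_p^2+a_p+1$ or $a_p^2+1$. Each of these polynomials $P$ has degree at most $2$, so the Hasse bound gives $|P(a_p)|\leq 4p+2\sqrt p+1<6p$. Hence $P(a_p)=jp$ for some integer $j$ with $|j|\leq 5$.
\begin{itemize}
\item If $j=0$, the only possibilities are $a_p=\pm1$, since $a^2+a+1$ and $a^2+1$ have no integer roots.
\item If $j\neq 0$, then $p=P(a)/j$ is determined by the integer $a=a_p$ and by $j$. We have $|a|\leq 2\sqrt x$ and only boundedly many choices of $P$ and $j$. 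So the number of such primes $p\leq x$ is $O(\sqrt x)$.
\end{itemize}
Therefore the number of bad primes is
\[
  |\{p\leq x\,\mid\, a_p=1\}|+|\{p\leq x\,\mid\, a_p=-1\}|+O(\sqrt x).
\]

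The second step, which is the main obstacle, bounds the number of primes with $a_p=t$ for a fixed nonzero integer $t$. This is the Lang--Trotter type problem.
\begin{itemize}
\item If $E$ has no CM, I would invoke Serre's theorem from ``Quelques applications du théorème de densité de Chebotarev''. It shows, unconditionally, that $|\{p\leq x\,\mid\, a_p=t\}|\ll x/(\log x)^{1+\eta}$ for some $\eta>0$, which is certainly $o(x/\log x)$. The proof applies Chebotarev to the $\ell$-adic images, which are open by Serre's open image theorem: the condition $a_p\equiv t\mods{\ell^k}$ cuts out a proportion of Frobenius classes tending to $0$.
\item If $E$ has CM by an order in $\Qq(\sqrt{-D})$, the argument is elementary. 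Since $t\neq 0$, $p$ is ordinary, so $p$ splits and $4p=t^2+Db^2$ for some integer $b$. Then $p\leq x$ is determined by $b$ with $|b|\ll\sqrt x$, giving only $O(\sqrt x)$ primes.
\end{itemize}

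Combining the two steps, the bad primes number $o(x/\log x)$. The complement, the set of primes with $\ord_p(a_p)\geq 5$ (or $a_p=0$), then has size $\pi(x)(1+o(1))\sim x/\log x$.
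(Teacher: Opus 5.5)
Your proof is correct and follows the paper's argument: orders $\leq 2$ reduce to $a_p=\pm 1$, which are handled by Serre's Chebotarev bound, and orders $3$ and $4$ are handled by writing $P(a_p)=jp$ with $j$ bounded via the Hasse bound and then counting. The differences are minor and all sound: you use a trivial $O(\sqrt{x})$ count instead of the paper's sieve bound $O(\sqrt{x}/\log x)$, which suffices; you give an explicit elementary treatment of the CM case via $4p=t^2+Db^2$, where the paper only asserts that Serre's result extends; and you state explicitly the convention that $a_p\equiv 0\mods{p}$ counts as good, which the statement needs for CM curves and which the paper leaves implicit.
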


\begin{proof}
  If $\ord_p(a_E(\Ff_p))\leq 2$, then (for $p\geq 7$) we have
  $a_E(\Ff_p)=1$ or $-1$. Applying Serre's $\ell$-adic method for
  instance, one knows that the number of $p\leq x$ with this property is
  $o(\pi(x))$ as $x\to +\infty$ (see~\cite{serre-chebotarev}*{Th.\,20}
  for a more precise estimate; note that this result is stated for
  curves without complex multiplication, but for the case of values $1$
  or $-1$, it extends to the complex multiplication case).

  We can handle the possibility that $\ord_p(a_E(\Ff_p))=3$ or~$4$ in two
  different ways. Both rely on two facts:
  \begin{enumerate}
  \item The condition $\ord_p(a_E(\Ff_p))=3$
    (resp.\ $\ord_p(a_E(\Ff_p))=4$) is equivalent to the fact that
    $x=a_E(\Ff_p)$ is a root modulo~$p$ of a quadratic equation
    $f(x)=0$, namely $x^2+x+1=0$ for order~$3$ and $x^2+1=0$ for
    order~$4$.
  \item Because of the Hasse bound $|a_E(\Ff_p)|\leq 2\sqrt{p}$, this
    root of the equation is ``very small''.
  \end{enumerate}

  For the first, more elementary argument, we deduce that
  $0\leq f(a_E(\Ff_p))\leq 4p+2\sqrt{p}+1$, hence the condition
  $f(a_E(\Ff_p))\equiv 0\mods{p}$ means that
  either
  \[
    a_E(\Ff_p)^2+a_E(\Ff_p)+1=jp,\text{ or }
    a_E(\Ff_p)^2+1=jp
  \]
  for some integer $j$ with $0\leq j\leq 7$. The case $j=0$ is clearly
  excluded. For each value of~$j\geq 1$, an elementary sieve argument
  implies that there are at most $O(\sqrt{x}/\log x)$ primes~$p\leq x$
  such that $jp$ is of the form $n^2+1$ or $n^2+n+1$ (see,
  e.g.,~\cite{hr}*{Th.\,5.4} for the case~$j=1$). Thus, 
  \[
    |\{p\leq x\,\mid\, \ord_p(a_E(\Ff_p))=3\text{ or } 4\}|
    \ll \frac{\sqrt{x}}{\log x}.
  \]
  
  The second, less elementary argument, deduces from the Hasse bound
  that the fractional part $\{\frac{a_E(\Ff_p)}{p}\}$ satisfies
  \[
    0\leq \Bigl\{\frac{a_E(\Ff_p)}{p}\Bigr\}\leq \frac{2}{\sqrt{p}},
  \]
  which cannot happen often because of the result of Duke, Friedlander
  and Iwaniec~\cite{dfi} which shows that the fractional parts
  of the roots of these equations are equidistributed modulo primes.
  
  Precisely, let~$\eps>0$ be given. For $p$ large enough, we get
  \begin{multline*}
    |\{p\leq x\,\mid\, f(a_E(\Ff_p))=0\mods{p}\}| \leq
    \\
    \Bigl|\Bigl\{(p,\nu)\,\mid\, p\leq x,\ \nu\in\Ff_p,\ 0\leq
    \Bigl\{\frac{\nu}{p}\Bigr\}\leq \eps,\ f(\nu)\equiv
    0\mods{p}\Bigr\}\Bigr|,
  \end{multline*}
  and hence, by the Duke--Friedlander--Iwaniec theorem, 
  \[
    \limsup_{x\to+\infty}\frac{1}{\pi(x)}|\{p\leq x\,\mid\,
    f(a_E(\Ff_p))=0\mods{p}\}| \leq \eps. 
  \]
 Since~$\eps$ is arbitrary,
  the result follows again.
\end{proof}

\begin{remark}\
\begin{enumerate}

\item This result suggests a variant of Artin's Primitive Root
  conjecture: given an elliptic curve $E$ over $\Qq$, are there infinitely many
  primes~$p$ such that the Frobenius trace~$a_E(\Ff_p)$ is a primitive root
  modulo~$p$?  This can easily be tested numerically, and the results
  (unsurprisingly) suggest that the answer should be positive, and in
  fact that the density of these primes should also be positive.
  
  Here is a random example: for the curve
  \[
    y^2+y=x^3+x-1,
  \]
  and for primes $p\leq 300\,000$, there are $9\,607$ primes with
  $a_E(\Ff_p)$ a primitive root; for comparison with Artin's Conjecture,
  there are $9\,701$ primes in this range for which $2$ is a primitive
  root. Other random choices of elliptic curves display the same behavior; the
  number of primitive roots is about half of this for complex
  multiplication curves, as it should be since $a_E(\Ff_p)=0$ for about
  half of the primes in this case. (These computations were performed
  with \textsc{Pari/GP}~\cite{parigp}.)

  \item Continuing with an elliptic curve $E$ over $\Qq$, the usual heuristics
  suggest that there should exist infinitely many primes $p$ for which
  $a_E(\Ff_p)$ is of order~$4$, or~$3$, but the number of these $\leq x$
  should grow like $\log\log x$ (because the probability that
  $a_E(\Ff_p)^4=1\mods{p}$ should be about $1/p$, for instance, and
  different primes should behave independently). Numerics for random
  (non-CM) curves are definitely compatible with this, since one finds
  most commonly~$1$ or~$2$ such primes $p\leq 300\,000$ and
  $\log(\log( 300\,000))= 2.53\ldots$.

  \item We do not know how to prove the statement of
  Proposition~\ref{pr-order} if the condition $\ord_p(a_E(\Ff_p))\geq 5$
  is replaced by $\ord_p(a_E(\Ff_p))\geq 6$. Indeed, $a_E(\Ff_p)$
  satisfies then an equation of degree~$\geq 4$, and there are too many
  possibilities to apply the elementary sieve argument. And although a
  solution of (say) $\ord_p(a_E(\Ff_p))=5$ gives a small solution of the
  equation $X^4+X^3+X^2+X+1=0$ modulo~$p$, the equidistribution of
  fractional parts of roots of this equation modulo primes is not known
  (indeed, \emph{no} irreducible polynomial of degree~$\geq 3$ is
  currently known to satisfy the equidistribution property, although it
  is conjectured to hold).

  However, J. Merikoski pointed out that, for any integer~$m\geq 6$,
  one can still prove that there is a positive proportion of
  primes~$p$ for which $\ord_p(a_E(\Ff_p))$ is $\geq m$. Indeed, by
  Dirichlet's theorem on primes in arithmetic progressions, there is a
  positive proportion of primes $p$ such that $p-1$ is not divisible
  by any odd prime $<m$ (e.g., primes congruent to~$2$ modulo the
  product of the odd primes $<m$). For such primes~$p$, the order of
  $a_E(\Ff_p)$ modulo~$p$ is either~$\leq 2$ or~$>m$. By the argument
  at the beginning of the proof of Proposition~\ref{pr-order}, the
  number of primes~$p\leq x$ with $\ord_p(a_E(\Ff_p))\leq 2$ is
  $o(\pi(x))$, so we obtain the result.  
  \end{enumerate}
\end{remark}

\appendix
\section{Non-technical overview of generalized jacobians}
\label{app-gj}

We give here a quick intuitive overview of the definition of generalized
jacobians for non\nobreakdash-experts, assuming no background in algebraic geometry
(an accessible account can be found, for instance,
in~\cite{silverman}*{Ch.\,1,\,2}). The lecture~\cite{renyi-talk} might
also be helpful, as well as the more concise description of the
construction in~\cite{ffk2}*{\S\,2}.

Let $\Ff$ be a perfect field,  and~$\bar{\Ff}$ an algebraically closed field
containing~$\Ff$. An \emph{affine plane algebraic curve} $C$ over $\Ff$ is defined as the zero set in~$\bar{\Ff}^2$ of a non-constant two-variable
polynomial $f\in \Ff[X,Y]$, and the \emph{$\Ff$-rational points} on the
curve are those points $(x,y) \in \Ff^2$ such that $f(x,y)=0$;
the set of those points is denoted by~$C(\Ff)$.

The curve~$C$ is said to be \emph{irreducible} if~$f$ is irreducible,
and \emph{non-singular} (or smooth) if, for every point
$(x,y)\in\bar{\Ff}^2$ on~$C$, one of the partial derivatives
$\partial_xf(x,y)$ or~$\partial_yf(x,y)$ is non-zero. An important
special case is when $f$ is of the form
\begin{equation}\label{eq-hyp}
  f(X,Y)=Y^2-g(X),\quad\quad g\in \Ff[X],
\end{equation}
and the condition is then that~$g$ has simple roots
in~$\bar{\Ff}$. (These curves are called \emph{hyperelliptic}.)

Given an irreducible non-singular curve~$C$ and a multiset $\mfm$ of
rational points on~$C$ (usually called a \emph{modulus}, or an
\emph{effective divisor} on~$C$), there is an associated group called
the \emph{generalized jacobian}, and denoted by~$J_\mfm(C)$. Additionally,
except in very special cases, one shows that for a choice of a
point~$p_0=(x,y)$ on the curve, there is an associated injective map
$i\colon C\setminus\mfm \to J_\mfm(C)$, called an \emph{Abel--Jacobi
  map}; it gives us a natural way of viewing~$C\setminus \mfm$ itself as
a subset of $J_\mfm(C)$. For instance, this holds for hyperelliptic
curves if the degree of the polynomial~$g$ in~(\ref{eq-hyp}) is
greater than or equal to $3$.

Quite surprisingly, it was proved in~\cite{ffk2} that, under mild
assumptions on the curve and the modulus, the image by~$i$ of
$C\setminus \mfm$ always yields a Sidon or symmetric Sidon subset of the
group $J_\mfm(C)$. These are infinite sets in infinite groups, but if
one starts with a finite field~$\Ff$, one can take the subset of
elements invariant under the action of the \emph{Frobenius automorphism}
(in the case of~$C$, this means elements with $(x,y)\in\Ff^2$) to obtain
finite sets and finite groups. This is the starting point of our work.

We will now define more precisely the generalized
jacobian~$J_\mfm(C)$ and the Abel--Jacobi maps $i$, in two
concrete special cases, both of which are particularly relevant to our
main results: elliptic curves with $\mathfrak m$ consisting of two
distinct points, and curves of genus~$2$ with~$\mathfrak
m=\emptyset$. In the spirit of accessibility, we describe these two examples in different languages: for the first, we give a completely explicit description of the group $J_\mfm(C)$, whereas for the second we use more abstract language, which
extends readily to a complete definition of all generalized jacobians.

\textbf{Elliptic curves.}  We begin with the case of elliptic
curves. For our purposes,\footnote{\,As long as the characteristic of
  $\Ff$ is not $2$ or $3$, every elliptic curve can be put into this
  form, called \emph{Weierstrass form}.} an elliptic curve $E$ is the set of solutions to the polynomial
equation $y^2=x^3+ax+b$, for some constants~$a,b \in \Ff$, chosen so
that the polynomial $x^3+ax+b$ has simple roots in~$\bar{\Ff}$. It
is not hard to see that if a line $\ell$ passes through two points
$P,Q$ of $E$, then it also passes through a third point; this even
holds if $P=Q$, so long as we interpret $\ell$ as the tangent line to $E$ at $P$.\footnote{\,The assumption that
  the polynomial has simple roots is precisely what is needed to
  ensure the existence of this tangent line at any point of the
  curve.} Precisely, the only exceptions to this rule are vertical
lines, which pass through only two points of $E$, but this is
corrected by adding to the solutions of the equation a ``point at
infinity'', denoted~$\infty$, which lies on all vertical lines.
Given a point $P=(x,y)$, we denote by~$-P=(x,-y)$ the point obtained
by reflecting about the $x$-axis; note that $-P$ also lies on~$E$, and
that $P, -P$, and $\infty$ are collinear. We denote by $E(\bar{\Ff})$
the set of points on~$E$, including $\infty$, and by~$E(\Ff)$ the
subset of those with coordinates in~$\Ff$.

The fact above allows us to define a binary operation on
$E(\bar{\Ff})$, by declaring $P*Q$ to be the unique third point lying
on the line between $P$ and $Q$. We then define $P+Q$ to be equal
to~$-(P*Q)$; that is, $P+Q$ is computed by taking the line through $P$
and $Q$, finding its unique third intersection with $E$, and
reflecting that point about the $x$-axis. Surprisingly, this rule
defines an abelian group on the set $E(\bar{\Ff})$ of points of $E$,
in which the identity element is $\infty$ and the additive inverse of
$P$ is $-P$. Moreover, it is elementary that~$E(\Ff)$ is a subgroup of~$E(\bar{\Ff})$, which is finite if~$\Ff$ is finite.

Given the group law on $E(\Ff)$, it is not hard to define the
generalized jacobian. In what follows, we rely on work of Déchène
\cite{dechene}*{\S\,5.3}, who explicitly worked out what the abstract
generalized jacobian corresponds to in this special case. Let
$M,N \in E(\Ff)$ be two distinct points on $E$ (with coordinates
in~$\Ff$), both distinct from~$\infty$, and let $\mfm$ consist of~$M$
and~$N$, both with multiplicity~$1$. The set underlying the
generalized jacobian $J_\mfm(E)$ is
$E(\bar{\Ff})\times \bar{\Ff}^\times$, i.e., the elements of the group
$J_\mfm(E)$ are pairs $(P,s)$, where $P$ is a point on $E$ and
$s \in \bar{\Ff}$ is non-zero. The group operation is not the ``obvious''
componentwise operation, but is defined (generically) by
\begin{equation}\label{eq:jacobian law}
  (P,s) \cdot (Q,t) = \left( P+Q, s\cdot t \cdot
    \frac{\ell_{P,Q}(M)}{\ell_{P+Q,\infty}(M)}\cdot
    \frac{\ell_{P+Q,\infty}(N)}{\ell_{P,Q}(N)} \right),
\end{equation}
where $\ell_{P_1,P_2}$ denotes the equation of the line passing
through the points $P_1,P_2$. That is, in the first coordinate we
simply apply the group law on $E(\bar{\Ff})$ described above, whereas
the second coordinate multiplies the elements of $\Ff^\times$ together
with an additional quantity involving the points $P,Q$ as
well as the fixed modulus $\mfm=\{M,N\}$\footnote{\,As written, this
  group law depends on the choice of which point is $M$ and which is
  $N$, rather than on the unordered pair, but it is not hard to see
  that one obtains an isomorphic group by swapping their roles. More
  problematically, the expression in \eqref{eq:jacobian law} does not
  make sense if $\{P,Q,P+Q\} \cap \{M,N\} \neq \emptyset$, as we are
  then dividing by zero; to define the group operation in this case,
  one uses an identical formula, simply shifted by some
  \emph{translation point}; see \cite{dechene}*{Prop.\,5.5} for
  details.}. The identity element of the group is the pair
$(\infty,1)$.  Finally, the Abel--Jacobi map
$i$ is defined  by $i(P) = (P,1)$. The
subgroup of $\Ff$-rational points is simply the subgroup where
$P\in E(\Ff)$ and~$s\in\Ff^{\times}$.

We stress that this description of the group law of $J_\mfm(E)$ is
completely explicit, also in the sense that all expressions arising in it can be
efficiently computed.

As an illustration,  let us quickly verify that this construction does
indeed give a symmetric Sidon set, following the argument of
\cite{ffk2}*{Lemma 7}. As previously stated, this is an infinite
subset of~$J_{\mfm}(C)$, but taking the points with coordinates
in~$\Ff$ gives finite sets and groups when~$\Ff$ is finite.

\begin{proposition}
  Let $E$ be an elliptic curve over $\Ff$, and let $\mfm = \{M,N\}$
  for some distinct points $M,N \in E(\Ff)$, both distinct
  from~$\infty$. Then the set $S = \{(P,1) : P \in E(\bar{\Ff})\}$ is
  a symmetric Sidon subset of $J_\mfm(E)$, with center
  $(-(M+N),1) \in S$.
\end{proposition}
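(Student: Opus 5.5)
Using the explicit group law~\eqref{eq:jacobian law}, we compute $(P,1)\cdot(Q,1)=(P+Q,\phi(P,Q))$, where
\[
  \phi(P,Q)=\frac{\ell_{P,Q}(M)\,\ell_{P+Q,\infty}(N)}{\ell_{P+Q,\infty}(M)\,\ell_{P,Q}(N)}.
\]
The first coordinate of a sum of two elements of $S$ is therefore $P+Q$, and the second is a rational function of the line through $P,Q$ (which also passes through $-(P+Q)$). The plan has two parts. First we show that $\phi(P,Q)$ depends only on the \emph{pair} $\{P,Q\}$ and not merely on $P+Q$. Then, fixing a value $R=P+Q$, we show that the map from the line through $P,Q,-R$ to $\phi$ is injective, except for the degenerate case that produces the ``center'' solutions.

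\textbf{Symmetry.} We take $c=(-(M+N),1)$. For $P\in E$, set $Q=-(M+N)-P$. Then $P,Q$ and $-(P+Q)=M+N$ are collinear, but this line passes through $M+N$, not through $M$ or $N$. So we instead look at the lines through $P,Q$ and through $P+Q=-(M+N)$ and $\infty$, and use the divisor identity on $E$: the function $\ell_{P,Q}/\ell_{P+Q,\infty}$ has divisor $(P)+(Q)-(P+Q)-2(\infty)$. The expression $\phi(P,Q)$ is then a ratio of values at $M$ and $N$ of a function $f_{P,Q}$ with this divisor, normalised at $\infty$. To check $S=c-S$, i.e.\ $(P,1)\cdot(Q,1)=c$, it suffices to show $f_{P,Q}(M)=f_{P,Q}(N)$ when $P+Q=-(M+N)$. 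By Weil reciprocity applied to $f_{P,Q}$ and the line $g$ through $M,N$ (divisor $(M)+(N)+(-(M+N))-3(\infty)$), we get $f(M)f(N)f(-(M+N))/(\text{leading term at }\infty)=g(P)g(Q)/g(P+Q)$. This is the step I expect to need care: in practice it is easier to verify directly in coordinates that $f_{P,Q}(x,y)=(y-\lambda x-\nu)/(x-x_{P+Q})$ and to compare the values at $M$ and $N$ with the help of the line $y=\mu x+\kappa$ through $M,N$. For $P+Q=-(M+N)$, the point $P+Q$ lies on the line through $M,N$, and a direct computation should show that the two ratios agree.

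\textbf{Sidon property.} Suppose $(P,1)(Q,1)=(P',1)(Q',1)$. Then $P+Q=P'+Q'=R$ and $f_{P,Q}(M)/f_{P,Q}(N)=f_{P',Q'}(M)/f_{P',Q'}(N)$. The functions $f_{P,Q}$ with fixed $R$ form the pencil of lines through $-R$, parametrised by slope $\lambda$. In the coordinates above, $f(M)/f(N)$ is a Möbius function of $\lambda$: it is the ratio $(y_M-y_{-R}-\lambda(x_M-x_{-R}))\,(x_N-x_R)$ over the analogous expression with $M,N$ swapped. A nonconstant Möbius transformation is injective, so $\lambda=\lambda'$, whence $\{P,Q\}=\{P',Q'\}$ (the line meets $E$ in $-R,P,Q$). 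The transformation degenerates to a constant exactly when $M$, $N$, $-R$ are collinear, i.e.\ $-R=M+N$ read appropriately, i.e.\ $R=-(M+N)$; this is precisely the center, and there every pair summing to $c$ gives the same value, which matches the allowed solutions $b=a_0-a$. The main obstacles are thus the degeneracy analysis and the boundary cases where the formula involves $M$ or $N$ (handled by translation as in the footnote).
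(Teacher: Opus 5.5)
\textbf{Genuine error: the center.} Your Sidon half is essentially the paper's argument. The paper fixes two distinct lines through $R=-(P_1+Q_1)$ and reads $\ell_{P_1,Q_1}(M)/\ell_{P_2,Q_2}(M)$ as a M\"obius function of the slope of $RM$. You fix the pencil through $-R$ and read $\phi$ as a M\"obius function of its slope $\lambda$. It is the same cross-ratio.

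The problem sits exactly where you write ``read appropriately''. Three points $M,N,X$ of $E$ are collinear iff $M+N+X$ is the origin $\infty$, i.e.\ $X=-(M+N)$. With $X=-R$ and $R=P+Q$ this gives $P+Q=M+N$, not $-(M+N)$. So your own degeneracy criterion says the exceptional sum is $M+N$, and the symmetry step you set up cannot work.

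For $P+Q=-(M+N)$, the moving line $\ell_{P,Q}$ passes through $-(P+Q)=M+N$, as you noticed. That point lies on the line $MN$ only if $M+N$ is $2$-torsion. The fact that $P+Q$ itself lies on $MN$ is irrelevant: $P+Q$ enters only through the vertical line $\ell_{P+Q,\infty}$, which does not move with $P$. Concretely, write $T=P+Q$, let $\lambda$ be the slope of $\ell_{P,Q}$, and let $s_M,s_N$ be the slopes of the lines joining $-T$ to $M$ and to $N$. Using $x_{-T}=x_T$, and for $T\notin\{\pm M,\pm N\}$, one gets
\[
  \phi(P,Q)=\frac{s_M-\lambda}{s_N-\lambda}.
\]
For $T=-(M+N)$ with $2(M+N)\neq\infty$ we have $s_M\neq s_N$, so $\phi\neq 1$ on every non-vertical line of the pencil. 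The ``direct computation'' you anticipate would show the ratios \emph{disagree}, so $(-(M+N),1)$ is not a center. For instance, over $\Qq$ on $y^2=x^3-x+1$ with $M=(0,1)$ and $N=(1,1)$, we get $M+N=(-1,-1)$ and $\phi=(2-\lambda)/(1-\lambda)$ on the pencil through $M+N$, which never equals $1$.

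\textbf{The fix.} Take the center $(M+N,1)$. For $T=M+N$, the point $-T$ is the third point of $E$ on the line $MN$, so $s_M=s_N$ and the displayed formula gives $\phi=1$ for every line through $-T$. That is, $(M+N,1)-S=S$. For every other $T$ the same formula is a non-constant M\"obius function of $\lambda$, which is your Sidon step. So one formula handles both halves, and the Weil reciprocity detour can be dropped. Incidentally, $\ell_{P,Q}/\ell_{P+Q,\infty}$ has divisor $(P)+(Q)-(P+Q)-(\infty)$; your version with $-2(\infty)$ has degree $-1$.

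The statement you were given, and the last sentence of the paper's proof, contain the same sign slip. The paper's argument yields $R=-(P_1+Q_1)=-(M+N)$, so the common value of the products is $(-R,1)=(M+N,1)$, not $(R,1)$. Your plan is sound once the center is corrected. As written, though, it bends the collinearity criterion and asserts a computation that is false in order to match the misprinted center.
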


\begin{proof}
  Suppose that $P_1,Q_1,P_2,Q_2 \in E(\Ff)$ satisfy
  $(P_1,1)\cdot (Q_1,1) = (P_2,1)\cdot (Q_2,1)$; this means, by the
  group law \eqref{eq:jacobian law}, that
  \[
    P_1+Q_1 = P_2+Q_2
  \]
  and
  \[
    \frac{\ell_{P_1,Q_1}(M)}{\ell_{P_1+Q_1,\infty}(M)}\cdot
    \frac{\ell_{P_1+Q_1,\infty}(N)}{\ell_{P_1,Q_1}(N)}=
    \frac{\ell_{P_2,Q_2}(M)}{\ell_{P_2+Q_2,\infty}(M)}\cdot
    \frac{\ell_{P_2+Q_2,\infty}(N)}{\ell_{P_2,Q_2}(N)}.
  \]
  The second equation can be simplified since $P_1+Q_1=P_2+Q_2$,
  becoming
  \[
    \frac{\ell_{P_1,Q_1}(M)}{\ell_{P_1,Q_1}(N)} =
    \frac{\ell_{P_2,Q_2}(M)}{\ell_{P_2,Q_2}(N)}
    \qquad\Longleftrightarrow\qquad
    \frac{\ell_{P_1,Q_1}(M)}{\ell_{P_2,Q_2}(M)} =
    \frac{\ell_{P_1,Q_1}(N)}{\ell_{P_2,Q_2}(N)}.
  \]
  
  Let $R=-(P_1+Q_1)=-(P_2+Q_2)$; this is the intersection point of the
  lines through $P_1,Q_1$ and through $P_2,Q_2$. We are done if
  $\{P_1,Q_1\}=\{P_2,Q_2\}$, so we may assume this does not happen;
  the key observation in this case is that the ratio
  $\ell_{P_1,Q_1}(M)/\ell_{P_2,Q_2}(M)$ determines and is determined
  by the slope of the line through $M$ and $R$. As a consequence, we
  find that the slope of the line through $M$ and $R$ equals the slope
  of the line through $N$ and $R$, and hence the points $M,N,R$ are
  collinear. By the definition of the group law on the elliptic curve,
  we find that $R=-(M+N)$. But this shows that $R$ is independent of
  $P_1,P_2,Q_1,Q_2$; that is, whenever
  $(P_1,1)\cdot (Q_1,1)=(P_2,1)\cdot (Q_2,1)$, we either have that
  $\{P_1,Q_1\}=\{P_2,Q_2\}$, or that
  $(P_1,1)\cdot (Q_1,1)=(-(M+N),1)=(P_2,1)\cdot (Q_2,1)$. As it is
  straightforward to verify that $S$ is symmetric about $(-(M+N),1)$,
  we conclude that $S$ is a symmetric Sidon set.
\end{proof}

\textbf{Genus~$2$ curves.} We now turn to the second example, which
concerns curves of genus~$2$. For us, such a curve over $\Ff$ is an
algebraic curve given by an equation $y^2=f(x)$, where $f$ is a
polynomial of degree $5$ or $6$ with no repeated roots in
$\bar{\Ff}$. In this case, we only deal with the case where the
modulus $\mfm$ is empty; in such instances, the generalized jacobian
is nothing more than the ``classical'' jacobian $J(C)$ of $C$, which
we now define. As in the previous case, we need to add to~$C$ some
points at infinity to obtain a smooth projective curve; there is only
one if~$\deg(f)=5$ and there are two if~$\deg(f)=6$. The
notation~$C(\bar{\Ff})$ or~$C(\Ff)$ always mean the sets of points
including these.

The group $J(C)$ is defined as the quotient of two infinite abelian
groups. The first is the group of \emph{degree-zero divisors}
on~$C$. Here, a \emph{divisor} is nothing more than a formal linear
combination, with integer coefficients, of points on $C$. The divisors
form a group, denoted by~$\Div(C)$, under the addition of formal linear
combinations. In other words, $\Div(C)$ is simply the free abelian
group generated by the points of $C(\bar{\Ff})$. It contains the
subgroup $\Div^0(C)$ of degree-zero divisors, namely those divisors
whose coefficients sum to zero.

A special class of divisors are \emph{principal divisors}, which are
those arising from non-zero rational function, that is, from ratios
$r(X,Y)=p(X,Y)/q(X,Y)$ of non-zero polynomials
in~$\bar{\Ff}[X,Y]$. Namely, each such rational function gives us a
divisor $\divv(r)$ recording the roots and poles of $r$. More
precisely, $\divv(r)$ is defined to be the sum of the points $P$ at
which \hbox{$r(P)=0$} (counted with multiplicity), minus the sum of the
points $P$ at which \hbox{$r(P)=\infty$} (again with multiplicity). A basic
fact of algebraic geometry is that if $\deg p=\deg q$, then
$\divv(p/q)$ is a degree-zero divisor; this is essentially a
restatement of the fact that a degree-$d$ polynomial has exactly $d$
roots, counted with multiplicity, over every algebraically closed
field. Since $\divv(rr') = \divv(r)+\divv(r')$, we see that the set of
principal divisors forms a subgroup $P(C)$ of $\Div^0(C)$. The
\emph{jacobian} is then defined as the quotient $J(C)=\Div^0(C)/P(C)$.

If~$\Ff$ is finite, then the group $J(C)(\Ff)$ of $\Ff$-rational
points of~$J(C)$ (which is a finite subgroup of~$J(C)$) is defined as
the subgroup of classes of divisors which are invariant under the
Frobenius automorphism~$\sigma$ acting on divisors by the rule
\[
  \sum_{P} n_P (P)\mapsto \sum_P n_P (\sigma(P)).
\]

\begin{remark}
  Being invariant does not necessarily mean that the points~$P$ for
  which $n_P\not=0$ are in~$C(\Ff)$: for instance, if~$P=(x,y)$ with
  $x$ and~$y$ in the quadratic extension of~$\Ff$ and $Q$ is another point of $C(\Ff)$, then
  $(P)+(\sigma(P))-2(Q)$ is in $J(C)(\Ff)$.
\end{remark}

As before, the Abel--Jacobi map is essentially the most natural way
one would guess to embed the points of $C$ into $J(C)$. For instance, if one of the points at infinity $\infty$ is rational, then every point $P \in C(\Ff)$
yields a degree-zero divisor $(P)-(\infty)$, which yields an embedding of
the points of $C$ into $\Div^0(C)$. By composing with the quotient map
$\Div^0(C) \to J(C)$, we obtain the Abel--Jacobi embedding $C(\Ff) \to
J(C)$.

Again, it turns out that the image of this map is a symmetric Sidon
subset of the group~$J(C)$. This is not very hard to prove, but
requires some background theory on hyperelliptic curves, so we refer
to~\cite{ffk2} for details.

Although our description is abstract, there are
efficient algorithms to represent points of~$J(C)(\Ff)$ and 
operate on them (see, for instance, the recent description by Flynn and
Khuri-Makdisi~\cite{f-km} of $J(C)$ as a subvariety of
$\mathbf{P}^3\times \mathbf{P}^3$ with an explicit form of the
addition~map).

\bibliographystyle{Nabbrv}
\bibliography{jacobian-graphs}

\end{document}